\documentclass[11pt]{article}

\usepackage[margin=1in]{geometry}
\usepackage{amsmath,amssymb,amsthm,mathtools}
\usepackage{booktabs,longtable,threeparttable,array,tabularx}
\usepackage{enumitem}
\usepackage{siunitx}
\usepackage{xcolor}

\usepackage{hyperref}
\hypersetup{
  colorlinks=true,
  linkcolor=blue,
  citecolor=blue,
  urlcolor=blue
}
\usepackage[authoryear]{natbib}
\usepackage{microtype}
\usepackage[T1]{fontenc}
\usepackage{dsfont}
\numberwithin{equation}{section}

\newtheorem{theorem}{Theorem}[section]
\newtheorem{lemma}[theorem]{Lemma}
\newtheorem{proposition}[theorem]{Proposition}
\newtheorem{corollary}[theorem]{Corollary}
\theoremstyle{remark}
\newtheorem{remark}[theorem]{Remark}

\title{Pointwise Complexity for Gaussian Fields:\\Upper Envelopes, Algorithmic Lower Bounds, and Separation}
\author{Yunbei Xu\\National University of Singapore\\\texttt{yunbei@nus.edu.sg}}
\date{}

\begin{document}
\maketitle

\begin{abstract}
We prove a variance-aware pointwise majorizing-measure theorem for centered Gaussian processes. Classical generic chaining characterizes the scalar quantity \(\mathbb E\sup_{x\in T}X_x\); the theorem here gives a simultaneous high-probability envelope for the entire field. For an ambient prior \(\mu\), the envelope at \(x\) is governed by a {\it pointwise} Fernique--Talagrand functional \[\Phi_\mu(x)
:=
\int_0^{4\sigma(x)}
\sqrt{\log\frac{1}{\mu(B_d(x,\varepsilon))}}\,d\varepsilon,\] together with the corresponding Gaussian tail term. The theorem provides a reusable field-level refinement of classical generic chaining and a Gaussian-process counterpart of pointwise empirical-process bounds for deep neural networks.

We also record a Bayesian algorithmic lower envelope from the interactive Fano/data-processing principle. For a known prior \(\pi\), an observation channel, and a concrete estimator \(\widehat t(Y)\), the lower bound is expressed through the exact ghost small-ball mass \(\mathbb E_{Y\sim Q}\pi(B_d(\widehat t(Y),\Delta))\), rather than a worst-case covering number. In Gaussian location experiments, comparison decoders convert Bayes location error into lower bounds on decision-aligned Gaussian ranges.   We then construct an elementary weighted-basis example separating the usual Fano relaxation for a fixed prior, the Bayesian algorithmic lower envelope, the pointwise Gaussian envelope on the selected subatlas, and the full-class minimax risk/global Gaussian scale. Together, these results show that  algorithmic lower bounds provide local-geometric validations of pointwise complexity for fixed estimators in overparameterized ambient classes, precisely in regimes where classical minimax theory becomes either too coarse or oracle-dependent. 

This separation can also be recast in minimax language as penalty-range information relaxation, highlighting an important question of algorithmic robustness and information advantage for classical high-dimensional models and regularized algorithms.

The appendices further formalize analogies between renormalization group flows and deep neural networks, and between graph local-time and pointwise dimension, showing trajectory-aligned separations between pointwise and global complexity scales.
\end{abstract}

\tableofcontents

\section{Introduction and Main Results}

Classical generic chaining controls the scalar quantity \(\mathbb E\sup_{x\in T}X_x\) for a Gaussian process \(\{X_x\}_{x\in T}\). For many field-level questions, however, this scalar is not the first object one wants to estimate. In the graph setting, the second Ray--Knight theorem identifies a local-time field at an inverse-local-time stopping rule with a shifted square of a Gaussian free field. In constructive and lattice field theory, one similarly studies local observables before passing to global maxima or continuum limits. This suggests that the Gaussian input should itself be pointwise: one should first prove a simultaneous field-level envelope and only afterwards take suprema or threshold events.

A further motivation comes from recent work on pointwise generalization for deep neural networks \citep{li-xu-pointwise-generalization-dnn}. There the central object is not a class-wide supremum but a hypothesis-dependent finite-scale complexity. The proof is structural: an exact non-perturbative telescoping expansion exposes learned feature Gram matrices, and a hierarchical local-chart/global-atlas prior converts the resulting data-dependent active subspaces into a data-independent bound. The present paper develops the Gaussian-process counterpart of that viewpoint. Theorem~\ref{thm:pointwise-gaussian} is intended as a convenient reference whenever one needs a field-level refinement of classical generic chaining, and as the Gaussian-process counterpart of pointwise empirical-process bounds for deep neural networks.

We also record a single-radius Bayesian lower-envelope counterpart, derived from the data-processing principle of \citet{chen-foster-han-qian-rakhlin-xu-2024}, for a specified prior, observation channel, and estimator. In Gaussian location experiments, comparison decoders turn Bayes location error into an algorithmic lower bound on a decision-aligned Gaussian range. Conversely, a simultaneous pointwise Gaussian envelope yields an algorithmic upper bound on the same decoding risk. Ordinary nearest-neighbor decoding is the simplest instance; the separating example in Theorem~\ref{thm:decision-aligned-separation} uses an unrestricted norm-regularized nearest-neighbor rule over the full ambient class, reflecting the inductive-bias/interpolation principle that good empirical fit must be paired with a locality or complexity bias \citep{cover-hart-1967,belkin-hsu-mitra-2018}. We then construct an elementary weighted-basis ``hub--cloud'' example in which the fixed-prior Fano relaxation is vacuous, the algorithmic ghost-mass lower envelope is sharp for the chosen Bayes problem and for the pointwise upper envelope on the estimator-selected subatlas, while the full-class minimax risk and the global Gaussian supremum scale are much larger.

These results clarify why a Bayesian algorithmic lower bound can provide the right validation for a pointwise upper envelope under specified priors, even in overparameterized ambient classes where classical minimax or Bayes-risk criteria may become too large, prior-dependent, or oracle-dependent. In such regimes, there may be no ready-made notion of optimality that is both intrinsic and decision-aligned. The pointwise and algorithmic viewpoint developed here therefore motivates criteria localized to the algorithm or estimator and the reference law, paralleling probabilistic refinements of worst-case analysis in theoretical computer science \citep{yao1977probabilistic,levin1986average,spielman-teng-2004}.\footnote{Here ``algorithmic'' refers to bounds that depend on a concrete estimator and reference law. This usage is distinct from classical algorithmic information theory, which is traditionally formulated for finite strings or countable description spaces. For related notions of pointwise/local dimension and their connections to algorithmic and effective statistical dimension, see \citet{li-xu-pointwise-generalization-dnn,lutz2016note}.}

We further recast the separation in minimax terms, as a novel penalty-range information-relaxation reformulation. The penalty-range formulation also emphasizes an algorithmic-robustness question that is meaningful in traditional high-dimensional models, not only in overparameterized ones: how much structural information must be revealed as an oracle model class, and how much can instead be replaced by a stable tuning range for the estimator actually used \citep{tibshirani1996lasso,bickel-ritov-tsybakov-2009,belloni-chernozhukov-wang-2011,negahban-ravikumar-wainwright-yu-2012}?

The appendices formalize two structural analogies: one between exponential-family renormalization group (RG) flows and nonlinear feature learning in deep neural networks (DNN), and another between local time for continuous-time random walks on graphs and pointwise/local dimension in learning theory. The examples show explicit separations in which a trajectory- or target-aligned subatlas has a strictly smaller intrinsic scale than any global covering scale. These appendices are intended as initial results and guideposts for connections that have not yet been fully explored.

In summary, the paper connects field-level generic chaining with decision-aligned guarantees and demonstrates strong separations between pointwise and global complexity scales.
\subsection{Main pointwise Gaussian theorem}

Let \(\{X_x\}_{x\in T}\) be a centered Gaussian process on a finite index set \(T\). Fix an anchor \(x_0\in T\) such that \(X_{x_0}=0\) almost surely, and define
\[
d(x,y):=\bigl(\mathbb E|X_x-X_y|^2\bigr)^{1/2},
\qquad
\sigma(x):=(\mathbb E X_x^2)^{1/2}=d(x,x_0),
\qquad
R_\ast:=\sup_{x\in T}\sigma(x).
\]
For an ambient prior \(\mu\in\Delta(T)\), where \(\Delta(T)\) denotes the set of probability measures on \(T\), define the localized pointwise Fernique--Talagrand functional
\begin{equation}\label{eq:pointwise-phi-def}
\Phi_\mu(x)
:=
\int_0^{4\sigma(x)}
\sqrt{\log\frac{1}{\mu(B_d(x,\varepsilon))}}\,d\varepsilon,
\qquad
B_d(x,\varepsilon):=\{y\in T:d(x,y)\le \varepsilon\}.
\end{equation}
Here the upper limit is local: it is proportional to the standard deviation \(\sigma(x)\), rather than the global radius. The factor \(4\) is inessential; it absorbs the two constant-factor losses coming from anchoring and from the nearest-point pushforward used in the ambient-equivalence step. If one instead works with the non-local full-radius integral, then the upper limits \(R_\ast\) and \(2R_\ast\) are equivalent up to universal constants, since the entropy integrand is nonincreasing in \(\varepsilon\).

Write
\[
\Phi_{\mu,\ast}:=\sup_{x\in T}\Phi_\mu(x),
\qquad
\log_2^+(u):=\max\{\log_2 u,0\}.
\]
The degenerate case \(R_\ast=0\) is trivial, and the statements below are understood to be vacuous if \(\Phi_{\mu,\ast}=+\infty\).

For \(r_0>0\), \(s_0\in(0,R_\ast]\), and \(\delta\in(0,1)\), set
\begin{equation}\label{eq:peeling-multiplicity}
M_{r_0,s_0}
:=
\bigl(1+\lceil\log_2^+(\Phi_{\mu,\ast}/r_0)\rceil\bigr)
\bigl(1+\lceil\log_2(R_\ast/s_0)\rceil\bigr),
\end{equation}
and
\begin{equation}\label{eq:pointwise-envelope-shape}
\mathcal E_{\mu,r_0,s_0}(x;\delta)
:=
\max\{\Phi_\mu(x),r_0\}
+
\max\{\sigma(x),s_0\}
\sqrt{\log\!\left(\frac{eM_{r_0,s_0}}{\delta}\right)}.
\end{equation}
Thus all logarithmic losses from the two successive pointwise-peeling steps are isolated in \(M_{r_0,s_0}\).

We state the main result as a variance-aware pointwise Gaussian majorizing-measure theorem in high probability.

\begin{theorem}[Pointwise Gaussian majorizing-measure bound]
\label{thm:pointwise-gaussian}
Assume \(0<R_\ast<\infty\) and \(\Phi_{\mu,\ast}<\infty\). There exists an absolute constant \(C>0\) such that, for every ambient prior \(\mu\in\Delta(T)\), every \(r_0>0\), every \(s_0\in(0,R_\ast]\), and every \(\delta\in(0,1)\), with probability at least \(1-\delta\),
\begin{equation}\label{eq:pointwise-gaussian-rigorous}
\forall x\in T:\qquad
X_x\le C\,\mathcal E_{\mu,r_0,s_0}(x;\delta).
\end{equation}
Consequently, applying the same bound to \(X\) and \(-X\) and taking a union bound, with probability at least \(1-\delta\),
\begin{equation}\label{eq:absolute-pointwise-gaussian}
\forall x\in T:\qquad
|X_x|\le A_{\mu,r_0,s_0}(x;\delta),
\qquad
A_{\mu,r_0,s_0}(x;\delta):=
C\,\mathcal E_{\mu,r_0,s_0}(x;\delta/2).
\end{equation}
When \(r_0,s_0\) are fixed, we abbreviate \(A_{\mu,r_0,s_0}\) to \(A_\mu\).
\end{theorem}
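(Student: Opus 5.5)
The plan is to reduce the statement to a localized version of the standard chaining bound, and then remove the localization by peeling. The localized bound should say: for a fixed scale pair \((r,s)\), on the subset
\[
T_{r,s}:=\{x:\Phi_\mu(x)\le r,\ \sigma(x)\le s\},
\]
we have, with probability at least \(1-\eta\),
\[
\sup_{x\in T_{r,s}}X_x\le C\bigl(r+s\sqrt{\log(e/\eta)}\bigr).
\]
Given this, I apply it on the dyadic grid \(r_k=2^k r_0\) for \(k=0,\dots,\lceil\log_2^+(\Phi_{\mu,\ast}/r_0)\rceil\), and \(s_j=2^{-j}R_\ast\) truncated at \(s_0\). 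There are exactly \(M_{r_0,s_0}\) cells, so I take \(\eta=\delta/M_{r_0,s_0}\) and a union bound. Every \(x\) lies in a cell with \(r_k\le 2\max\{\Phi_\mu(x),r_0\}\) and \(s_j\le 2\max\{\sigma(x),s_0\}\). This gives \eqref{eq:pointwise-gaussian-rigorous} after adjusting \(C\), with the logarithm equal to \(\log(eM/\delta)\). The absolute-value statement \eqref{eq:absolute-pointwise-gaussian} then follows by applying the result to \(-X\), which has the same \(d\), \(\sigma\) and \(\Phi_\mu\), at level \(\delta/2\) each.

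For the localized bound I split it into an expectation bound and a deviation bound. The deviation part is Borell--TIS: on \(T_{r,s}\) we have \(\sup\sigma\le s\), so \(\sup_{T_{r,s}}X\) exceeds its mean by more than \(s\sqrt{2\log(1/\eta)}\) with probability at most \(\eta\). It remains to show \(\mathbb E\sup_{x\in T_{r,s}}X_x\le C r\), i.e.\ a Fernique-type majorizing measure bound with a localized integral.

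Here I expect the main obstacle. Fernique's theorem needs a probability measure \emph{on the index set} \(S=T_{r,s}\cup\{x_0\}\), whereas \(\mu\) lives on all of \(T\) and the integral is truncated at \(4\sigma(x)\) rather than at \(\operatorname{diam}(S)\).

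To handle the first issue I push \(\mu\) forward to \(S\) by a nearest-point map \(\pi\colon T\to S\). If \(d(y,x)\le\varepsilon\) with \(x\in S\), then \(d(\pi(y),x)\le 2\varepsilon\). Hence \(\pi_\#\mu(B_S(x,2\varepsilon))\ge\mu(B_d(x,\varepsilon))\), and after the change of variables the Fernique integral for \(\pi_\#\mu\) is at most twice \(\Phi_\mu\)-type integrals. This accounts for one factor of 2.

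For the truncation, I use the anchor. Since \(X_{x_0}=0\), I can replace \(S\) by \(S\cup\{x_0\}\) and chain each \(x\) toward \(x_0\). I intend to use the chaining form of Fernique's proof, with partitions and geometric scales \(2^{-n}\) starting at the level \(\asymp\sigma(x)\) for each point; alternatively, I mix \(\pi_\#\mu\) with half a mass at \(x_0\). The terms from scales above \(2\sigma(x)\) are controlled because \(d(x,x_0)=\sigma(x)\). Beyond that radius the ball around \(x\) contains \(x_0\), which carries mass \(1/2\), so the integrand is at most \(\sqrt{\log 2}\). Those scales contribute at most \(C\sigma(x)\), which is absorbed into the \(s\)-term via Borell--TIS anyway. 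This is where the second factor of 2, and hence the 4 in \(\Phi_\mu\), is consumed.

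Finally, the Fernique bound \(\mathbb E\sup_S X\le C\sup_{x\in S}\int_0^{\infty}\sqrt{\log(1/\nu(B(x,\varepsilon)))}\,d\varepsilon\) for finite \(S\) is classical. With \(\nu=\tfrac12\pi_\#\mu+\tfrac12\delta_{x_0}\), the \(\log 2\) loss is harmless, and this yields \(\mathbb E\sup_{T_{r,s}}X\le C(r+s)\), which completes the localized bound.
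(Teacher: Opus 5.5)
Your proposal is correct and is essentially the paper's proof. It uses the same anchored Fernique bound on $T_{r,s}\cup\{x_0\}$ with the measure $\tfrac12\delta_{x_0}+\tfrac12 p_\#\mu$ built from the nearest-point pushforward, followed by Borell--Tsirelson on each fixed subset. The only difference is organizational: you peel once over the product dyadic grid in $(\Phi_\mu,\sigma)$, whereas the paper applies its one-parameter peeling lemma twice (first in $\Phi_\mu$, then in $\sigma$ for the residual $X_x-C\max\{\Phi_\mu(x),r_0\}$). Both give the same multiplicity $M_{r_0,s_0}$.

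One small slip: your intermediate target $\mathbb E\sup_{T_{r,s}}X\le Cr$ cannot hold in general. For example, take $T=\{x_0,x_1\}$ and $\mu=\delta_{x_1}$, so that $\Phi_{\mu,\ast}=0$ while $\mathbb E\sup_T X>0$. The bound $C(r+s)$ that you actually derive is what the argument needs, and it suffices.
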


Besides providing the Gaussian-process counterpart of the pointwise empirical-process bound in \citet[Theorem~1]{li-xu-pointwise-generalization-dnn}, the present result makes a key technical refinement: the global boundedness parameter for the metric is replaced by the local variance scale $4\sigma(x)$. Moreover, in the Gaussian-process setting we do not need the advanced symmetrization machinery or the mixed empirical-ghost metric used in \citet[Theorem~1]{li-xu-pointwise-generalization-dnn}.

Before presenting the in-expectation consequence, we record the corresponding anchored form of the classical generic-chaining scale. This is a localized reformulation of the usual Fernique--Talagrand theorem: since the cutoff $4\sigma(x)$ is not the standard full-radius presentation, the global radius $R_\ast$ must remain part of the anchored scale.
\begin{proposition}[Classical scale in anchored local form]
\label{prop:global-sharpness}
There exist absolute constants \(0<c<C<\infty\) such that
\begin{equation}\label{eq:gamma2-optimality}
c\,\Bigl(R_\ast+\inf_{\mu\in\Delta(T)}\Phi_{\mu,\ast}\Bigr)
\le
\mathbb E\sup_{x\in T}X_x
\le
C\,\Bigl(R_\ast+\inf_{\mu\in\Delta(T)}\Phi_{\mu,\ast}\Bigr).
\end{equation}
Moreover, since the process is centered and anchored,
\[
\mathbb E\sup_{x\in T}|X_x|
\asymp
\mathbb E\sup_{x\in T}X_x.
\]
Therefore
\begin{equation}\label{eq:absolute-global-optimality}
\mathbb E\sup_{x\in T}|X_x|
\asymp
R_\ast+\inf_{\mu\in\Delta(T)}\Phi_{\mu,\ast}.
\end{equation}
\end{proposition}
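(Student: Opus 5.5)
The plan is to reduce \eqref{eq:gamma2-optimality} to the classical Fernique--Talagrand majorizing-measure theorem in its standard full-radius form. That theorem states that
\[
\mathbb E\sup_{x\in T}X_x \asymp \gamma(T):=\inf_{\mu}\sup_{x\in T}\int_0^{\operatorname{diam}(T)}\sqrt{\log\frac{1}{\mu(B_d(x,\varepsilon))}}\,d\varepsilon .
\]
It then suffices to show \(\gamma(T)\asymp R_\ast+\inf_\mu\Phi_{\mu,\ast}\). Since \(\sigma(x)=d(x,x_0)\le R_\ast\), the diameter satisfies \(R_\ast\le \operatorname{diam}(T)\le 2R_\ast\), so we may replace the upper limit by \(2R_\ast\). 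As noted after \eqref{eq:pointwise-phi-def}, the integrand is nonincreasing in \(\varepsilon\), so the limits \(R_\ast\) and \(2R_\ast\) are equivalent up to a factor \(2\).

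\emph{Upper bound.} Fix \(\mu\) and \(x\). Split \(\int_0^{2R_\ast}\) into \(\int_0^{4\sigma(x)}\), which is at most \(\Phi_\mu(x)\), and \(\int_{4\sigma(x)}^{2R_\ast}\). The tail integral is the delicate part, because \(\mu(B_d(x,\varepsilon))\) for \(\varepsilon>4\sigma(x)\) need not be large. The fix is to modify the measure: replace \(\mu\) by \(\tilde\mu=\tfrac12\mu+\tfrac12\delta_{x_0}\). For \(\varepsilon\ge\sigma(x)\) we have \(x_0\in B_d(x,\varepsilon)\), so \(\tilde\mu(B)\ge 1/2\) and the integrand is at most \(\sqrt{\log 2}\). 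The tail contribution is therefore at most \(2R_\ast\sqrt{\log2}\). On \([0,4\sigma(x)]\), \(\log(1/\tilde\mu(B))\le \log 2+\log(1/\mu(B))\), which adds at most \(4\sigma(x)\sqrt{\log 2}\le 4R_\ast\sqrt{\log2}\). Hence \(\gamma(T)\le \Phi_{\mu,\ast}+CR_\ast\) for every \(\mu\). Taking the infimum and applying Talagrand's upper bound gives the right inequality in \eqref{eq:gamma2-optimality}.

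\emph{Lower bound.} First, \(\mathbb E\sup_x X_x\ge \mathbb E\max(X_x,0)=\sigma(x)/\sqrt{2\pi}\) for every \(x\), because \(X_{x_0}=0\) lies in the supremum. This gives \(\mathbb E\sup X\ge R_\ast/\sqrt{2\pi}\). Second, take a near-optimal \(\mu\) for \(\gamma(T)\). For each \(x\), either \(4\sigma(x)\le \operatorname{diam}(T)\), in which case \(\Phi_\mu(x)\) is at most the full integral, or \(4\sigma(x)>\operatorname{diam}(T)\). In the latter case, for \(\varepsilon\ge\operatorname{diam}(T)\) the ball is all of \(T\) and the integrand vanishes. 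Either way \(\Phi_\mu(x)\le\int_0^{\operatorname{diam}T}\), so \(\inf_\mu\Phi_{\mu,\ast}\le\gamma(T)\le C\,\mathbb E\sup X\) by the majorizing-measure lower bound. Adding the two estimates gives the left inequality.

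For the absolute-value statement, \(\sup_x|X_x|=\max(\sup_x X_x,\sup_x(-X_x))\), and both terms are nonnegative because \(X_{x_0}=0\). Since \(-X\) has the same law as \(X\), we get \(\mathbb E\sup_x X_x\le \mathbb E\sup_x|X_x|\le 2\,\mathbb E\sup_x X_x\). Combining this with \eqref{eq:gamma2-optimality} gives \eqref{eq:absolute-global-optimality}. The main obstacle is the upper-bound tail beyond \(4\sigma(x)\). I expect the anchor-mixing trick with \(\tilde\mu\) to resolve it, and it is the reason \(R_\ast\) must appear explicitly in the anchored scale.
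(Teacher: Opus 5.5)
Your proposal is correct and follows the paper's proof essentially step for step. The upper bound is exactly Lemma~\ref{lem:anchored-subset-mm} with \(H=T\): there the nearest-point map is the identity, the measure is \(\tfrac12\delta_{x_0}+\tfrac12\mu\), and the integral is split at \(4\sigma(x)\wedge 2R_\ast\). The lower bound combines \(\mathbb E(X_{x_\ast})_+=R_\ast/\sqrt{2\pi}\) with \(\Phi_\mu(x)\le\Phi_\mu^{\rm full}(x)\) and Talagrand's lower bound, followed by the same symmetry argument for \(|X|\). You also explicitly observe that the integrand vanishes beyond \(\operatorname{diam}(T)\le 2R_\ast\), which justifies the comparison \(\Phi_\mu(x)\le\Phi_\mu^{\rm full}(x)\); the paper asserts this without comment.
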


We now record the in-expectation consequence of Theorem~\ref{thm:pointwise-gaussian}. In particular, after optimizing over the ambient prior, the resulting global in-expectation bound is sharp up to universal constants. The variance-radius term is not an additional loss: it is part of the anchored generic-chaining scale in \eqref{eq:absolute-global-optimality}, while the peeling logarithms disappear by taking \(r_0\ge \Phi_{\mu,\ast}\) and \(s_0=R_\ast\).
\begin{corollary}[In-expectation consequence and sharp global scale]
\label{cor:pointwise-gaussian-expectation}
Under the assumptions of Theorem~\ref{thm:pointwise-gaussian}, there exists an absolute constant \(C>0\) such that
\begin{equation}\label{eq:normalized-expectation-envelope}
\mathbb E\left[
\sup_{x\in T}
\frac{
\bigl(|X_x|-C\max\{\Phi_\mu(x),r_0\}\bigr)_+
}{
\max\{\sigma(x),s_0\}
}
\right]
\le
C\sqrt{\log(eM_{r_0,s_0})}.
\end{equation}
Consequently, for every \(x\in T\),
\begin{equation}\label{eq:pointwise-expectation-envelope}
\mathbb E|X_x|
\le
C\max\{\Phi_\mu(x),r_0\}
+
C\max\{\sigma(x),s_0\}\sqrt{\log(eM_{r_0,s_0})}.
\end{equation}
At the global level, taking \(r_0\ge\Phi_{\mu,\ast}\) and \(s_0=R_\ast\) gives \(M_{r_0,s_0}=1\), hence
\begin{equation}\label{eq:global-expectation-clean}
\mathbb E\sup_{x\in T}|X_x|
\le
C(r_0+R_\ast).
\end{equation}
Letting \(r_0\downarrow \Phi_{\mu,\ast}\), then optimizing over \(\mu\), and using \eqref{eq:absolute-global-optimality}, we obtain the sharp equivalence
\begin{equation}\label{eq:optimized-expectation-optimal}
\mathbb E\sup_{x\in T}|X_x|
\asymp
R_\ast+\inf_{\mu\in\Delta(T)}\Phi_{\mu,\ast}.
\end{equation}
Thus the optimized expectation version is optimal up to absolute constants.
\end{corollary}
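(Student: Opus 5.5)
The plan is to derive the corollary from Theorem~\ref{thm:pointwise-gaussian} by integrating its tail bound over \(\delta\), and then to combine the resulting global bound with Proposition~\ref{prop:global-sharpness}.

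\textbf{Step 1: tail bound for the normalized supremum.} Define
\[
Z:=\sup_{x\in T}\frac{\bigl(|X_x|-C_0\max\{\Phi_\mu(x),r_0\}\bigr)_+}{\max\{\sigma(x),s_0\}},
\]
where \(C_0\) is the constant of \eqref{eq:absolute-pointwise-gaussian}. By \eqref{eq:absolute-pointwise-gaussian}, for every \(\delta\in(0,1)\), with probability at least \(1-\delta\), for all \(x\) simultaneously,
\[
|X_x|\le C_0\max\{\Phi_\mu(x),r_0\}+C_0\max\{\sigma(x),s_0\}\sqrt{\log(2eM/\delta)},
\qquad M:=M_{r_0,s_0}.
\]
Hence \(\mathbb P\bigl(Z>C_0\sqrt{\log(2eM/\delta)}\bigr)\le\delta\). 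Setting \(t=C_0\sqrt{\log(2eM/\delta)}\), which ranges over \(t\ge t_0:=C_0\sqrt{\log(2eM)}\), we obtain
\[
\mathbb P(Z>t)\le 2eM\,e^{-t^2/C_0^2}\qquad (t\ge t_0).
\]

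\textbf{Step 2: integrate the tail.} Write
\[
\mathbb E Z\le t_0+\int_{t_0}^\infty 2eM\,e^{-t^2/C_0^2}\,dt .
\]
The standard Gaussian tail estimate \(\int_a^\infty e^{-t^2/c^2}\,dt\le \frac{c^2}{2a}e^{-a^2/c^2}\), applied with \(a=t_0\), bounds the integral by \(C_0/(2\sqrt{\log(2eM)})\le C_0\). This gives \eqref{eq:normalized-expectation-envelope} with \(C=2C_0\), using \(\log(2eM)\le 2\log(eM)\) since \(eM\ge e\ge 2\). Equation \eqref{eq:pointwise-expectation-envelope} then follows from the pointwise inequality \(|X_x|\le C\max\{\Phi_\mu(x),r_0\}+\max\{\sigma(x),s_0\}\,Z\) by taking expectations.

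\textbf{Step 3: the global bound \eqref{eq:global-expectation-clean}.} Take \(r_0\ge\Phi_{\mu,\ast}\) and \(s_0=R_\ast\). Then \(\lceil\log_2^+(\Phi_{\mu,\ast}/r_0)\rceil=0\) and \(\lceil\log_2 1\rceil=0\), so \(M=1\). Moreover \(\max\{\Phi_\mu(x),r_0\}=r_0\) and \(\max\{\sigma(x),s_0\}=R_\ast\) for all \(x\). Therefore
\[
\sup_x|X_x|\le Cr_0+R_\ast Z,
\]
and taking expectations gives \(\mathbb E\sup_x|X_x|\le C(r_0+R_\ast)\) after enlarging \(C\).

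\textbf{Step 4: the equivalence \eqref{eq:optimized-expectation-optimal}.} Since \(C\) is independent of \(r_0\), we may let \(r_0\downarrow\Phi_{\mu,\ast}\). If \(\Phi_{\mu,\ast}=0\), any \(r_0>0\) may be used and sent to \(0\). Infimizing over \(\mu\) then yields the upper bound. The matching lower bound is the lower half of \eqref{eq:absolute-global-optimality} in Proposition~\ref{prop:global-sharpness}.

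The only delicate point is bookkeeping: \(\delta\) must be converted into a tail parameter uniformly in \(x\), using the fact that \(M\) does not depend on \(\delta\). This holds by construction, because all peeling losses are isolated in \(M_{r_0,s_0}\). No further obstacle is expected.
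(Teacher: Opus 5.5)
Your proposal is correct and takes the same route as the paper: you turn the two-sided envelope \eqref{eq:absolute-pointwise-gaussian} into a tail bound for the normalized supremum, integrate it, specialize to \(r_0\ge\Phi_{\mu,\ast}\), \(s_0=R_\ast\) (so \(M_{r_0,s_0}=1\)), and take the lower half from Proposition~\ref{prop:global-sharpness}. Your explicit tail integration fills in what the paper leaves as ``integrating this tail,'' and the enlarged \(C\) can serve in both places, since raising it inside the positive part only decreases the left-hand side; the one nit is that bounding the tail integral by \(C_0\) gives \((1+\sqrt2)C_0\), not \(2C_0\), so use \(\le C_0/2\) there (immaterial for an absolute constant).
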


\paragraph{Remark (variance term versus chaining complexity).}
The term in Theorem~\ref{thm:pointwise-gaussian} involving \(\sigma(x)\)  is a variance-sensitive Gaussian tail term.  It is necessary for high-probability control, already for a single non-anchor point \(X_x\sim N(0,\sigma(x)^2)\), but it is not a replacement for the chaining complexity \(\Phi_\mu(x)\), which carries the local geometry of the index set.
For a fixed marginal,
\(\mathbb E|X_x|=\sqrt{\frac{2}{\pi}}\,\sigma(x)\),
so the extra factor \(\sqrt{\log(eM_{r_0,s_0})}\) in \eqref{eq:pointwise-expectation-envelope} is not intrinsic to \(\mathbb E|X_x|\). It is the price of extracting a simultaneous pointwise envelope by peeling first over the pointwise Fernique--Talagrand complexity and then over the local variance. At the global in-expectation level, this peeling factor disappears, yielding the sharp scale in \eqref{eq:optimized-expectation-optimal}.

\paragraph{Remark (pointwise high probability).} For Gaussian processes, subset-level in-expectation bounds transfer to high-probability bounds through Borell--Tsirelson concentration, with variance scale \(\sigma_H=\sup_{x\in H}(\mathbb E X_x^2)^{1/2}\) for a fixed subset \(H\). The genuinely pointwise statement is not obtained from this transfer alone: after the subset-homogeneous estimate is established, one first applies uniform pointwise peeling to localize the Fernique--Talagrand term \(\Phi_\mu(x)\), and then applies the same peeling mechanism a second time to improve the tail scale from the global radius to the local variance \(\sigma(x)\). Conversely, the resulting high-probability envelope implies the stated in-expectation consequence by integrating the tail.

\subsection{Main Bayesian algorithmic lower and upper bounds}\label{subsec:main-bayesian-lower}

The pointwise upper envelope is non-decision-theoretic, simultaneous, multiscale, and pathwise in the Gaussian field. A lower-bound analogue should therefore be stated with care. We first record a one-scale Bayesian and estimator-dependent primitive. Let \((T,d)\) be a finite metric space, let \(\pi\in\Delta(T)\) be a known prior, and let \(\{P_t:t\in T\}\) be an experiment on an observation space \(\mathcal Y\). For an estimator \(\widehat t:\mathcal Y\to T\), a reference law \(Q\in\Delta(\mathcal Y)\), and a scale \(\Delta>0\), define
\[
\rho_{\Delta,Q}(\widehat t)
:=\mathbb P_{t\sim\pi,\,Y\sim Q}\{d(t,\widehat t(Y))<\Delta\}
=\mathbb E_{Y\sim Q}\,\pi\!\left(B_d(\widehat t(Y),\Delta)\right),
\]
and
\[
\mathcal I_\pi(Q):=\mathbb E_{t\sim\pi}D_{\mathrm{KL}}(P_t\|Q).
\]
The following result is the Gaussian-process and KL specialization of the interactive Fano method of \citet[Theorem~2]{chen-foster-han-qian-rakhlin-xu-2024}. The original theorem provides a unifying
information-theoretic principle for proving lower bounds in learning and
decision-making problems.
\begin{proposition}[Bayesian algorithmic lower envelope]\label{prop:bayesian-algorithmic-lower}
For every estimator \(\widehat t\), every \(\Delta>0\), and every reference law \(Q\) with \(0<\rho_{\Delta,Q}(\widehat t)<1\),
\begin{equation}\label{eq:bayesian-algorithmic-lower-prob}
\mathbb P_{t\sim\pi,\,Y\sim P_t}\{d(t,\widehat t(Y))\ge\Delta\}
\ge
\left(
1-\frac{\mathcal I_{\pi}(Q)+\log 2}{\log(1/\rho_{\Delta,Q}(\widehat t))}
\right)_+ .
\end{equation}
Consequently,
\begin{equation}\label{eq:bayesian-algorithmic-lower-risk}
\mathbb E_{t\sim\pi,\,Y\sim P_t}d(t,\widehat t(Y))
\ge
\sup_{\substack{\Delta>0,\,Q:\\ 0<\rho_{\Delta,Q}(\widehat t)<1}}
\Delta
\left(
1-\frac{\mathcal I_{\pi}(Q)+\log 2}{\log(1/\rho_{\Delta,Q}(\widehat t))}
\right)_+ .
\end{equation}
\end{proposition}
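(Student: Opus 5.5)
The plan is to prove the probability bound \eqref{eq:bayesian-algorithmic-lower-prob} by a data-processing argument for the KL divergence applied to the indicator of success. The risk bound \eqref{eq:bayesian-algorithmic-lower-risk} then follows from Markov's inequality. Consider two joint laws of \((t,Y)\): the true law \(\mathbb P:=\pi\otimes P_t\), meaning \(t\sim\pi\) and \(Y\mid t\sim P_t\), and the ghost law \(\mathbb Q:=\pi\otimes Q\), under which \(t\) and \(Y\) are independent. By the chain rule,
\[
D_{\mathrm{KL}}(\mathbb P\|\mathbb Q)=\mathbb E_{t\sim\pi}D_{\mathrm{KL}}(P_t\|Q)=\mathcal I_\pi(Q).
\]
Define the success event \(E:=\{d(t,\widehat t(Y))<\Delta\}\). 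Set \(p:=\mathbb P(E)\) and \(q:=\mathbb Q(E)=\rho_{\Delta,Q}(\widehat t)\in(0,1)\). The second expression for \(q\) follows by Fubini, because under \(\mathbb Q\) the variable \(t\) is independent of \(\widehat t(Y)\).

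The first step is data processing under the measurable map \((t,Y)\mapsto\mathds 1_E\). It gives
\[
\mathrm{kl}(p\|q):=p\log\frac pq+(1-p)\log\frac{1-p}{1-q}\le \mathcal I_\pi(Q).
\]
The second step is the standard Fano-type lower bound on the binary divergence. We have \(p\log p+(1-p)\log(1-p)\ge-\log 2\), and \((1-p)\log\frac{1}{1-q}\ge 0\). Together these give
\[
\mathrm{kl}(p\|q)\ge p\log\frac1q-\log 2.
\]
Combining the two steps yields \(p\le(\mathcal I_\pi(Q)+\log 2)/\log(1/q)\). Since \(\mathbb P\{d(t,\widehat t(Y))\ge\Delta\}=1-p\ge0\), we obtain \eqref{eq:bayesian-algorithmic-lower-prob}, with the positive part accounting for the case where the right-hand ratio exceeds one. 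The hypothesis \(0<q<1\) ensures that \(\log(1/q)\in(0,\infty)\).

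For \eqref{eq:bayesian-algorithmic-lower-risk}, Markov's inequality gives \(\mathbb E\,d(t,\widehat t(Y))\ge\Delta\,\mathbb P\{d\ge\Delta\}\). Inserting \eqref{eq:bayesian-algorithmic-lower-prob} and taking the supremum over admissible \((\Delta,Q)\) finishes the proof. If \(\mathcal I_\pi(Q)=\infty\), the bound is trivially \(0\).

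The only step needing care is justifying the chain-rule identity \(D_{\mathrm{KL}}(\mathbb P\|\mathbb Q)=\mathcal I_\pi(Q)\) when \(\mathcal Y\) is general. Because \(T\) is finite, this reduces to a finite mixture: the density of \(\mathbb P\) with respect to \(\mathbb Q\) at \((t,y)\) is \(dP_t/dQ(y)\) whenever \(P_t\ll Q\). If some \(P_t\) with \(\pi(t)>0\) fails to be absolutely continuous with respect to \(Q\), both sides equal \(+\infty\). This is routine. Everything else is the specialization of \citet[Theorem~2]{chen-foster-han-qian-rakhlin-xu-2024} to the KL divergence and the indicator loss.
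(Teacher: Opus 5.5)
Your proposal is correct and follows essentially the same route as the paper. Data processing reduces $D_{\mathrm{KL}}(\mathbb P\|\pi\otimes Q)=\mathcal I_\pi(Q)$ to the Bernoulli pair $(p,q)$ for the success event $\{d(t,\widehat t(Y))<\Delta\}$; the elementary bound $\mathrm{kl}(p\|q)\ge p\log(1/q)-\log 2$ then applies, followed by multiplying by $\Delta$ (Markov) and optimizing over $(\Delta,Q)$, and your explicit checks of the binary-divergence inequality and the chain-rule identity fill in steps the paper states without proof.
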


The key point is that \(\rho_{\Delta,Q}(\widehat t)\) is not immediately replaced by the class-wide quantity
\[
q_\pi(\Delta):=\sup_{a\in T}\pi(B_d(a,\Delta)).
\]
Keeping the exact ghost mass \(\mathbb E_{Y\sim Q}\pi(B_d(\widehat t(Y),\Delta))\) makes the lower bound algorithmic: it is attached to the image of the actual estimator under ghost data. Replacing it by \(q_\pi(\Delta)\) gives the class-wide small-ball term used in classical Fano-type lower bounds \citep{zhang2006information} and in the fractional-covering specialization of \citet{chen-foster-han-qian-rakhlin-xu-2024}. In the Gaussian nearest-neighbor comparison below, this small-ball term must still be balanced against the information radius of the Gaussian channel.

We next present a lower-bound comparison for the Gaussian process itself,
rather than only for its induced distance.  The result is stated for a
comparison decoder.  This includes ordinary nearest neighbor and the
norm-regularized  rule used in the separating example in Theorem~\ref{thm:decision-aligned-separation}.

\begin{theorem}[Algorithmic lower bound for comparison decoders]
\label{thm:nn-comparison}
Let \(v:T\to\mathcal H\) be an embedding into a finite-dimensional Hilbert space and set
\(d(s,t)=\|v_s-v_t\|_{\mathcal H}\). Let
\[
Y=v_\Theta+\tau Z,
\qquad
\Theta\sim\pi,
\qquad
Z\sim N(0,I_{\mathcal H}),
\]
and let \(m\in\mathcal H\) be any reference center.  Set
\[
Q_{\tau,m}:=N(m,\tau^2I_{\mathcal H}),
\qquad
\mathcal I_{\pi,m}:=\frac{1}{2\tau^2}\int_T\|v_t-m\|^2\,\pi(dt).
\]
With \(\bar v_\pi:=\int_T v_t\,\pi(dt)\), the centered choice \(m=\bar v_\pi\) gives
\[
\mathcal I_{\pi,\bar v_\pi}=\frac{\mathsf V_\pi}{4\tau^2},
\qquad
\mathsf V_\pi:=\iint d(s,t)^2\pi(ds)\pi(dt).
\]

Let \(\widehat\Theta(Y)\) be an estimator with values in \(T\).
Assume that there is a penalty \(\Omega:T\to\mathbb R\) and nonnegative, integrable comparison errors
\(\varepsilon_{\rm opt}\) and \(\varepsilon_{\rm pen}\), measurable under the true joint law, such that
\begin{equation}\label{eq:comparison-decoder-condition}
\begin{gathered}
\|Y-v_{\widehat\Theta}\|^2+\Omega(\widehat\Theta)
\le
\|Y-v_\Theta\|^2+\Omega(\Theta)+\varepsilon_{\rm opt},\\
\Omega(\widehat\Theta)
\ge \Omega(\Theta)-\varepsilon_{\rm pen}.
\end{gathered}
\end{equation}
Let the full comparison error be denoted by
\[
\varepsilon_{\rm cmp}:=\varepsilon_{\rm opt}+\varepsilon_{\rm pen}.
\]
If, for some \(c\in(0,1)\) and \(\Delta>0\),
\begin{equation}\label{eq:bayesian-nn-condition}
\mathcal I_{\pi,m}+\log 2
\le
(1-c)\log\frac{1}{\rho_{\Delta,Q_{\tau,m}}(\widehat\Theta)},
\end{equation}
then
\begin{equation}\label{eq:bayesian-nn-risk-main}
\mathbb E_{\Theta,Y}d(\Theta,\widehat\Theta(Y))\ge c\Delta,
\end{equation}
and, for the Gaussian process \(X_t:=\langle Z,v_t\rangle\) built from the same Gaussian vector \(Z\),
\begin{equation}\label{eq:nn-to-gaussian-range-main}
\mathbb E_{\Theta,Z}\bigl[X_{\widehat\Theta}-X_\Theta\bigr]
\ge
\frac{c^2\Delta^2-\mathbb E_{\Theta,Z}\varepsilon_{\rm cmp}}{2\tau}.
\end{equation}
In particular, if \(\mathbb E\varepsilon_{\rm cmp}\le c^2\Delta^2/2\), then the right-hand side is at least \(c^2\Delta^2/(4\tau)\).  Thus the exact ghost mass gives an algorithmic lower bound on the Gaussian range sampled by the comparison decoder, up to the explicit comparison error.

Ordinary nearest neighbor corresponds to \(\Omega\equiv0\) and \(\varepsilon_{\rm opt}=\varepsilon_{\rm pen}=0\).  The unrestricted norm-regularized nearest-neighbor rule
\[
\widehat\Theta_\lambda(Y)\in
\arg\min_{a\in T}\bigl\{\|Y-v_a\|^2+\lambda\|v_a\|^2\bigr\}
\]
is also covered with zero error whenever the realized outputs have norm at least the norm of the true parameter, as in the separating example in Theorem~\ref{thm:decision-aligned-separation}. This is a canonical minimum-complexity inductive bias consistent with modern perspectives, rather than an explicit restriction of the hypothesis class.
\end{theorem}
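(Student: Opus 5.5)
The proof has three parts: the information computation, the risk bound, and the range bound obtained by expanding the comparison inequality.

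\emph{Information and risk.} First I compute the KL divergence. Since \(P_t=N(v_t,\tau^2I_{\mathcal H})\) and \(Q_{\tau,m}=N(m,\tau^2 I_{\mathcal H})\),
\[
D_{\mathrm{KL}}(P_t\|Q_{\tau,m})=\frac{\|v_t-m\|^2}{2\tau^2}.
\]
Averaging over \(\pi\) gives \(\mathcal I_\pi(Q_{\tau,m})=\mathcal I_{\pi,m}\). For the centered choice \(m=\bar v_\pi\), the identity
\[
\iint\|v_s-v_t\|^2\,\pi(ds)\pi(dt)=2\int\|v_t-\bar v_\pi\|^2\,\pi(dt)
\]
yields \(\mathcal I_{\pi,\bar v_\pi}=\mathsf V_\pi/(4\tau^2)\).

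Next, condition \eqref{eq:bayesian-nn-condition} forces \(\rho:=\rho_{\Delta,Q_{\tau,m}}(\widehat\Theta)<1\), because the left side is at least \(\log 2>0\). If \(\rho=0\), the probability bound is trivial (the ratio is read as \(0\)), or one perturbs \(\Delta\) slightly. Proposition~\ref{prop:bayesian-algorithmic-lower} then gives
\[
\mathbb P\{d(\Theta,\widehat\Theta)\ge\Delta\}\ge 1-(1-c)=c.
\]
Markov's inequality in reverse, \(\mathbb E d\ge\Delta\,\mathbb P\{d\ge\Delta\}\), gives \eqref{eq:bayesian-nn-risk-main}.

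\emph{Gaussian range.} Substitute \(Y=v_\Theta+\tau Z\) into the first line of \eqref{eq:comparison-decoder-condition}. Writing \(\|Y-v_{\widehat\Theta}\|^2=\|v_\Theta-v_{\widehat\Theta}+\tau Z\|^2\) and \(\|Y-v_\Theta\|^2=\tau^2\|Z\|^2\) and cancelling \(\tau^2\|Z\|^2\), we get
\[
d(\Theta,\widehat\Theta)^2+2\tau\langle Z,v_\Theta-v_{\widehat\Theta}\rangle+\Omega(\widehat\Theta)-\Omega(\Theta)\le\varepsilon_{\rm opt}.
\]
The second line of \eqref{eq:comparison-decoder-condition} gives \(\Omega(\widehat\Theta)-\Omega(\Theta)\ge-\varepsilon_{\rm pen}\). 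Combining the two,
\[
2\tau\bigl(X_{\widehat\Theta}-X_\Theta\bigr)\ge d(\Theta,\widehat\Theta)^2-\varepsilon_{\rm cmp}
\]
holds pathwise.

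Now take expectations and apply Jensen:
\[
\mathbb E d^2\ge(\mathbb E d)^2\ge c^2\Delta^2.
\]
This yields \eqref{eq:nn-to-gaussian-range-main}. Integrability of \(X_{\widehat\Theta}-X_\Theta\) holds because \(T\) is finite, so \(|X_t|\le\|Z\|\max_t\|v_t\|\). The "in particular" clause is immediate.

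\emph{Special cases.} Nearest neighbor is the case \(\Omega=0\) with zero errors. For the regularized rule with \(\Omega=\lambda\|v_a\|^2\), minimality over \(a\in T\ni\Theta\) gives the first line of \eqref{eq:comparison-decoder-condition} with \(\varepsilon_{\rm opt}=0\). The norm condition \(\|v_{\widehat\Theta}\|\ge\|v_\Theta\|\) gives the second line with \(\varepsilon_{\rm pen}=0\).

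The only real subtlety is the degenerate-\(\rho\) and measurability bookkeeping. The core obstacle, converting the decoder's optimality into a Gaussian increment, is resolved by the cross-term identity above.
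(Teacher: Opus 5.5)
Your proposal is correct and follows essentially the same route as the paper. Both compute the Gaussian KL to identify \(\mathcal I_\pi(Q_{\tau,m})=\mathcal I_{\pi,m}\), invoke Proposition~\ref{prop:bayesian-algorithmic-lower} for the risk bound, expand the comparison inequality with \(Y=v_\Theta+\tau Z\) to get the pathwise bound \(d(\Theta,\widehat\Theta)^2\le 2\tau(X_{\widehat\Theta}-X_\Theta)+\varepsilon_{\rm cmp}\), and conclude by Jensen.

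Your extra bookkeeping (the variance identity for \(m=\bar v_\pi\), integrability, and the case \(\rho=0\)) covers points the paper leaves implicit. For \(\rho=0\), the clean justification is that \(P_t\ll Q_{\tau,m}\), so the true-law probability of \(\{d<\Delta\}\) vanishes. Perturbing \(\Delta\) is unnecessary, and an upward perturbation could even increase \(\rho\).
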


We defer to Appendix~\ref{app:sudakov}
a route from the Bayesian algorithmic lower bound to a
Sudakov-type comparison via supremum relaxation, which consists of several technical results. Recent concurrent work \citep{zadik2026bayesian} shows that comparing nearest-neighbor and Bayes-optimal risks yields what is among the most succinct proofs to date of the majorizing-measure lower bound \citep{vanhandel2018chainingii,zadik2026bayesian}. Our aim here is instead conceptual: to show that the global Sudakov scale can be viewed as a coarse relaxation of a more localized algorithmic lower bound.

The same pathwise comparison underlying
Theorem~\ref{thm:nn-comparison}, together with the pointwise Gaussian
upper envelope, gives an algorithmic upper bound for Gaussian location decoding.
Thus, when the two estimates match, the Bayesian algorithmic lower bound and the
pointwise upper envelope validate the sharpness of one another.

\begin{corollary}[Algorithmic pointwise upper bound for estimator decoding]
\label{coro:estimation}
Work in the Gaussian location setup of Theorem~\ref{thm:nn-comparison}.
Thus \(T\) is embedded as \(\{v_t:t\in T\}\) in a Hilbert space,
\[
d(s,t)=\|v_s-v_t\|,
\qquad
Y=v_\Theta+\tau Z,
\qquad
X_t:=\langle Z,v_t\rangle ,
\]
where \(Z\) is standard Gaussian. Let \(\widehat\Theta=\widehat\Theta(Y)\) be a
decoder satisfying the approximate remainder comparison
\begin{equation}\label{eq:decoder-residual-comparison-for-upper}
\|Y-v_{\widehat\Theta(Y)}\|^2
\le
\|Y-v_\Theta\|^2+\varepsilon_{\rm up}
\end{equation}
under the joint law of \((\Theta,Y)\), for some nonnegative random variable \(\varepsilon_{\rm up}\).  For instance, \eqref{eq:comparison-decoder-condition} implies this condition with \(\varepsilon_{\rm up}=\varepsilon_{\rm cmp}\).

Suppose that a deterministic envelope \(A:T\to[0,\infty)\) satisfies
\begin{equation}\label{eq:generic-gaussian-envelope-for-upper}
\mathbb P_Z\bigl\{\forall t\in T:\ |X_t|\le A(t)\bigr\}\ge 1-\delta .
\end{equation}
Then, with probability at least \(1-\delta\) under the joint law of
\((\Theta,Y)\),
\begin{equation}\label{eq:algorithmic-upper-high-prob}
d(\Theta,\widehat\Theta(Y))^2
\le
2\tau\bigl(A(\Theta)+A(\widehat\Theta(Y))\bigr)+\varepsilon_{\rm up}.
\end{equation}
Consequently, if
\[
D_T:=\sup_{s,t\in T}d(s,t)<\infty,
\]
then
\begin{equation}\label{eq:algorithmic-upper-risk}
\mathbb E d(\Theta,\widehat\Theta(Y))
\le
\mathbb E\sqrt{2\tau\bigl(A(\Theta)+A(\widehat\Theta(Y))\bigr)+\varepsilon_{\rm up}}
+\delta D_T .
\end{equation}
In particular, after adjoining an anchor point \(t_0\) with \(v_{t_0}=0\) and
\(X_{t_0}=0\) if necessary, one may take
\[
A(t)=A_{\mu,r_0,s_0}(t;\delta)
\]
from \eqref{eq:absolute-pointwise-gaussian} in
Theorem~\ref{thm:pointwise-gaussian}, with the prior extended to
\(T\cup\{t_0\}\), for instance as \(\bar\mu=\frac12\delta_{t_0}+\frac12\mu\), which changes the pointwise functional on \(T\) only by universal constants. This yields a pointwise-complexity upper bound for the
estimation risk.
\end{corollary}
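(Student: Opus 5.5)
The plan is to expand the residual comparison \eqref{eq:decoder-residual-comparison-for-upper} pathwise, and then feed the Gaussian cross term into the envelope \eqref{eq:generic-gaussian-envelope-for-upper}. Write \(Y=v_\Theta+\tau Z\). Then
\[
\|Y-v_{\widehat\Theta}\|^2=\|v_\Theta-v_{\widehat\Theta}\|^2+2\tau\langle Z,v_\Theta-v_{\widehat\Theta}\rangle+\tau^2\|Z\|^2
\]
and \(\|Y-v_\Theta\|^2=\tau^2\|Z\|^2\). The \(\tau^2\|Z\|^2\) terms cancel, so the comparison condition becomes the deterministic pathwise inequality
\[
d(\Theta,\widehat\Theta)^2\le 2\tau\bigl(X_{\widehat\Theta}-X_\Theta\bigr)+\varepsilon_{\rm up}.
\]
This is the same identity that underlies \eqref{eq:nn-to-gaussian-range-main} in Theorem~\ref{thm:nn-comparison}, read in the opposite direction.

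Next I bound the Gaussian range using the envelope. The event \(E:=\{\forall t:\ |X_t|\le A(t)\}\) depends only on \(Z\). Since \(Z\) is independent of \(\Theta\), the joint probability of \(E\) is at least \(1-\delta\). On \(E\), evaluating at the random indices \(\widehat\Theta(Y)\) and \(\Theta\) is allowed because the envelope holds simultaneously for all \(t\in T\). This gives \(X_{\widehat\Theta}-X_\Theta\le A(\widehat\Theta)+A(\Theta)\), which proves \eqref{eq:algorithmic-upper-high-prob}. The simultaneity over all \(t\) is the key point. The decoder's output is data-dependent, so a fixed-\(t\) bound would not suffice, and this is exactly where Theorem~\ref{thm:pointwise-gaussian} is needed.

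For the risk bound \eqref{eq:algorithmic-upper-risk}, I split the expectation over \(E\) and \(E^c\). On \(E\), I take square roots of the high-probability bound and bound the resulting indicator-weighted term by the full expectation of the nonnegative square root. On \(E^c\), I use \(d\le D_T\) together with \(\mathbb P(E^c)\le\delta\).

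The last claim is about instantiating \(A\) with Theorem~\ref{thm:pointwise-gaussian}. Adjoining \(t_0\) with \(v_{t_0}=0\) gives an anchor with \(X_{t_0}=0\), and then \(\sigma(t)=\|v_t\|=d(t,t_0)\). Equation \eqref{eq:absolute-pointwise-gaussian} then supplies exactly \eqref{eq:generic-gaussian-envelope-for-upper} on \(T\cup\{t_0\}\), hence on \(T\).

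It remains to check that using \(\bar\mu=\frac12\delta_{t_0}+\frac12\mu\) changes \(\Phi\) on \(T\) only by universal constants. The lower direction is \(\bar\mu(B)\ge\frac12\mu(B\cap T)\), so \(\sqrt{\log(1/\bar\mu(B))}\le\sqrt{\log(1/\mu(B))}+\sqrt{\log 2}\). Integrating over \([0,4\sigma(x)]\) costs an additive \(4\sqrt{\log 2}\,\sigma(x)\). That cost is absorbed by the variance term of \(\mathcal E\), whose \(\sqrt{\log(eM/\delta)}\ge 1\).

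I expect the main obstacle to be this constant-factor bookkeeping, together with checking that \(\Phi_{\bar\mu,\ast}<\infty\) and \(R_\ast>0\) on the augmented set. The pathwise algebra itself is routine.
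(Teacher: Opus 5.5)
Your proposal is correct and follows the paper's proof essentially step for step. It uses the same pathwise expansion cancelling $\tau^2\|Z\|^2$, the same evaluation of the simultaneous envelope at the random indices $\Theta$ and $\widehat\Theta(Y)$, the same $E$/$E^c$ split with the $D_T$ bound, and the same anchoring via $T\cup\{t_0\}$. Your explicit check that passing to $\bar\mu$ increases $\Phi$ by at most an additive $4\sqrt{\log 2}\,\sigma(x)$, which the variance term absorbs, usefully makes precise the paper's unproved assertion about ``universal constants''; the change is additive rather than multiplicative in $\Phi$ itself.
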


\paragraph{Why keep the Bayesian and  algorithmic form.}

The Bayesian algorithmic form is crucial because a full-class minimax benchmark can be the wrong validation for an overparameterized decision
problem, while a restricted minimax benchmark can be {\it oracle-dependent}. Full minimax over \(T\) can be too pessimistic, because it is governed by the worst point of the ambient class rather than by the geometry selected by the prior and the algorithm; maximizing over priors can similarly erase the structure that makes a particular estimator appropriate. On the other hand, a restricted minimax benchmark is useful only when the correct restricted class is  known in advance. In the separating example in Theorem~\ref{thm:decision-aligned-separation}, the relevant class is the low-norm hub subatlas \(H_M\), but identifying \(H_M\) as the right estimator-selected subatlas is precisely the decision-dependent information revealed by the ghost image \(Y\mapsto \widehat t(Y)\). Along the sequence \(N=M^K\) with \(K\to\infty\), the norm-regularized decoder uses a fixed numerical penalty \(\lambda\), independent of the prior $\pi$ and the problem parameters \(M,N,R,S,\tau\), and has Bayes risk \(\asymp R\), whereas the full-class minimax risk is \(\asymp S\), with
\(
\frac{S}{R}\asymp \sqrt{\frac{\log N}{\log M}}=\sqrt K\to\infty
\).
Thus the full-class benchmark misses the decision-aligned scale, while the restricted benchmark recovers it only after an  oracle has selected \(H_M\).

The same issue is compounded in deep networks: the effective  subatlas is rarely a fixed combinatorial class and may depend on the architecture, data distribution, learned representation, and compressed feature spectrum \citep{li-xu-pointwise-generalization-dnn}. Moreover, computationally efficient procedures may also operate through relaxations or implicit regularization rather than by solving an exact restricted-class problem, as in sparse regression. The algorithmic ghost mass \(\rho_{\Delta,Q}(\widehat t)\) is designed for this regime: it evaluates the estimator actually used, under the intended prior geometry, and therefore can validate a pointwise upper envelope without requiring a separate restricted class to be specified in advance.

The Bayesian {\it algorithmic} lower bounds
in Proposition~\ref{prop:bayesian-algorithmic-lower} and Theorem~\ref{thm:nn-comparison} should also not be read as ordinary Bayesian lower bounds whose
purpose is to optimize over all estimators given a specific prior. The prior is an averaging and
comparison device, while the validated object is the performance of a concrete estimator that is of
practical interest regardless of knowledge of the prior. This distinction is important because the
usual Fano relaxation may be vacuous for a fixed nonuniform prior even though the estimator of
interest remains hard on the subgeometry selected by the prior and the ghost law. Such vacuity does not mean that the Bayesian decision problem is easy; it means
only that the particular worst-case small-ball relaxation
\(\sup_a \pi(B_d(a,\Delta))\) has discarded the algorithmic geometry of
the estimator.
In the separating example of Theorem~\ref{thm:decision-aligned-separation}, the Bayes-optimal risk under the displayed prior is also $\asymp R$. Thus the class-wide Fano relaxation is vacuous only as a validation; it does not mean that the Bayes problem itself is easy. The same rotationally symmetric class construction can accommodate an
arbitrarily large finite family of priors that exhibit the separation and have markedly different local mass profiles: the Bayes-optimal oracle varies substantially across these priors, whereas the unrestricted estimator remains unchanged.

\paragraph{Relationship to the pointwise upper theorem and existing lower bounds.}
The Bayesian algorithmic lower bound also clarifies the relation to the pointwise upper theorem through Corollary~\ref{coro:estimation}. The upper envelope is simultaneous, multiscale, and high-probability: it is pathwise in the Gaussian field and integrates local prior masses across scales. By contrast, the lower bound is Bayesian, one-scale, and estimator-dependent: it averages over \(\Theta\sim\pi\) and \(Y\sim P_\Theta\) for a fixed estimator, while the additional ghost/reference law \(Y\sim Q\) enters through the quantile term \(\rho_{\Delta,Q}(\widehat t)\). This parallels the information-theoretic upper--lower duality emphasized by \citet{zhang2006information}, but with the additional point that the lower bound is a data-processing statement valid for every estimator. 

DEC-type lower bounds \citep{foster-kakade-qian-rakhlin-2021,foster-golowich-han-2023} provide an independent motivation for incorporating algorithmic dependence into the fundamental limits of online regret, exploration, and interactive decision making. This perspective further motivates the Bayesian algorithmic lower bound of \citet{chen-foster-han-qian-rakhlin-xu-2024} and related variants \citep{xu-zeevi-air-2025}. The present algorithmic lower-bound perspective isolates a distinct but
complementary principle: a prior-mass lower-bound analogue of pointwise
complexity, in the spirit of the pointwise generalization framework of
\citet{li-xu-pointwise-generalization-dnn} for deep representation
learning.

\subsection{Separating example construction and pointwise validation}\label{subsec:seperation-main}

The next theorem is the formal version of the separating example proved in Section~\ref{sec:hub-cloud-algorithmic-ghost}.  It shows that, in an overparameterized ambient class, a fixed-prior worst-case Fano relaxation, a Bayesian algorithmic lower envelope, a pointwise Gaussian upper envelope, and the full minimax/global Gaussian scale can all have different meanings and different orders.  The estimator is unrestricted over the full ambient class; the relevant subatlas is selected by a norm-regularized inductive bias and by the ghost law, rather than imposed as a constraint.

\begin{theorem}[Decision-aligned separation and pointwise validation]
\label{thm:decision-aligned-separation}
There exist universal constants \(0<c<C<\infty\) with the following property.  Let \(M,N\ge2\) satisfy \(\log N\ge 1024\log M\).  Let
\[
 e_0,e_1,\ldots,e_M,f_1,\ldots,f_N
\]
be orthonormal vectors in \(\mathbb R^{M+N+1}\).  Fix \(R>0\), define
\[
\tau:=\frac{R}{\sqrt{\log M}},
\qquad
S:=\frac18\tau\sqrt{\log N}
      =\frac{R}{8}\sqrt{\frac{\log N}{\log M}},
\]
and set
\[
H_M:=\{Re_0,Re_1,\ldots,Re_M\},
\qquad
C_N:=\{Sf_1,\ldots,Sf_N\},
\qquad
T_{M,N}:=H_M\cup C_N .
\]
Let \(d(s,t)=\|s-t\|_2\), let \(\Delta=R/3\), and let the prior be
\[
\pi(Re_0)=\frac12,
\qquad
\pi(Re_i)=\frac1{2M},\quad i=1,\ldots,M,
\qquad
\pi(Sf_j)=0,
\quad j=1,\ldots,N .
\]
Consider the Gaussian location experiment
\[
Y=\Theta+\tau Z,
\qquad
\Theta\sim\pi,
\qquad
Z\sim N(0,I_{M+N+1}),
\]
and the unrestricted norm-regularized decoder
\begin{equation}\label{eq:norm-regularized-decoder-main}
\widehat\Theta_\lambda(Y)
\in
\arg\min_{a\in T_{M,N}}\{\|Y-a\|^2+\lambda\|a\|^2\},
\qquad
\lambda=64 .
\end{equation}
Let \(Q_\tau=N(0,\tau^2I_{M+N+1})\).  Then, for all sufficiently large \(M,N\), the following hold.

\begin{enumerate}[label=\textnormal{(\roman*)},leftmargin=2.2em]
\item \emph{Fixed-prior Fano relaxation is vacuous.}  Since every \(\Delta\)-ball in \(T_{M,N}\) is a singleton,
\[
q_\pi(\Delta):=\sup_{a\in T_{M,N}}\pi(B_d(a,\Delta))=\frac12 .
\]
Moreover the usual data-processing lower envelope obtained by replacing the exact ghost mass by \(q_\pi(\Delta)\) gives zero at the reference law \(Q_\tau\).

\item \emph{Exact algorithmic ghost mass is small.}  The estimator-dependent ghost mass satisfies
\[
\rho_{\Delta,Q_\tau}(\widehat\Theta_\lambda)
=
\mathbb E_{Y\sim Q_\tau}\pi(B_d(\widehat\Theta_\lambda(Y),\Delta))
\le \frac{C}{M} .
\]
Consequently the exact ghost entropy is of order \(\log M\), whereas the worst-case Fano small-ball entropy for the same prior is only \(\log2\).

\item \emph{Sharp Bayes risk for the actual unrestricted estimator.}
\[
 cR
 \le
 \mathbb E_{\Theta,Y}d(\Theta,\widehat\Theta_\lambda(Y))
 \le
 CR .
\]
The lower bound follows from Proposition~\ref{prop:bayesian-algorithmic-lower} using the exact ghost mass; the upper bound is the actual Bayes risk of the same unrestricted estimator. 

Moreover, the Bayes-optimal risk for the displayed prior is also $\asymp R$. Thus the class-wide Fano relaxation below is vacuous only as a validation; it does not reflect an easy Bayes problem. The same rotationally symmetric class construction can accommodate an arbitrarily large finite family of priors exhibiting the separation, with markedly different local mass profiles. Across this family, the Bayes-optimal oracle may vary substantially, while the unrestricted estimator remains unchanged.

\item \emph{Full-class minimax risk is much larger.}  If
\[
\mathfrak R_{\rm mm}(T_{M,N},\tau)
:=
\inf_{\widehat\theta}
\sup_{t\in T_{M,N}}
\mathbb E_t d(t,\widehat\theta(Y)),
\]
where the infimum is over all measurable estimators with values in the ambient Euclidean space, then
\[
 cS\le \mathfrak R_{\rm mm}(T_{M,N},\tau)\le CS .
\]
Thus, whenever \(\log N/\log M\to\infty\), the full-class minimax risk is asymptotically larger than the Bayes risk under \(\pi\).

\item \emph{Algorithmically induced Gaussian validation matches the selected subatlas.}
For the Gaussian process
\[
  X_t := \langle Z,t\rangle,\qquad t\in T_{M,N},
\]
where \(Z\) is the same standard Gaussian vector as in the location experiment
\(Y=\Theta+\tau Z\), define
\[
  \mathcal G_{\rm alg}
  :=
  \mathbb E_{\Theta,Z}
  \bigl[
    X_{\widehat\Theta_\lambda(\Theta+\tau Z)}-X_\Theta
  \bigr].
\]
Then
\[
  \mathcal G_{\rm alg}
  \asymp
  \mathbb E\sup_{s,u\in H_M}(X_s-X_u)
  \asymp
  R\sqrt{\log M}.
\]
Moreover, under the ghost/reference law \(Y\sim Q_\tau=N(0,\tau^2I_{M+N+1})\),
\[
  Q_\tau\{\widehat\Theta_\lambda(Y)\in C_N\}\le N^{-2}
\]
for all sufficiently large $N$, and hence the ghost-selected subatlas is \(H_M\) up to negligible probability.  Conditional on
\(\widehat\Theta_\lambda(Y)\in H_M\), the ghost selection law is uniform on \(H_M\).  Therefore,
after adjoining the zero anchor, the pointwise Gaussian upper theorem applied to the restricted
process on \(H_M\) gives the matching high-probability envelope
\[
  \mathbb P\!\left(
    \forall t\in H_M:\ |X_t|\le C R\sqrt{\log\frac{M}{\delta}}
  \right)
  \ge 1-\delta .
\]

\item \emph{Global Gaussian scale can be arbitrarily larger.}
\[
\mathbb E\sup_{s,u\in T_{M,N}}(X_s-X_u)
\ge
cS\sqrt{\log N}
=
\frac{c}{8}R\frac{\log N}{\sqrt{\log M}} .
\]
Thus the global Gaussian scale exceeds the decision-aligned scale \(R\sqrt{\log M}\) by a factor of order \(\log N/\log M\), which can be made arbitrarily large by taking \(N=M^K\) with \(K\to\infty\).
\end{enumerate}
Consequently the same explicit problem sequence separates
\begin{align*}
&\text{fixed-prior Fano relaxation}\\
\quad <\!\!< \quad
&\text{algorithmic ghost-mass lower envelope}\\
\quad \asymp \quad
&\text{pointwise upper envelope on the selected subatlas}\\
\quad <\!\!< \quad
&\text{full-class minimax/global Gaussian scale}.
\end{align*}
\end{theorem}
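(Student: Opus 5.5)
The plan is to reduce everything to explicit Gaussian computations on the decoder's decision regions, and then feed those computations into Proposition~\ref{prop:bayesian-algorithmic-lower}, Theorem~\ref{thm:nn-comparison} and Theorem~\ref{thm:pointwise-gaussian}. The first step is to rewrite the decoder. Since \(\|Y-a\|^2+\lambda\|a\|^2=\|Y\|^2-2\langle Y,a\rangle+(1+\lambda)\|a\|^2\), we have
\[
\widehat\Theta_\lambda(Y)\in\arg\max_{a\in T_{M,N}}\bigl\{2\langle Y,a\rangle-65\|a\|^2\bigr\}.
\]
So the decoder compares the scores \(2R\langle Y,e_i\rangle-65R^2\) on the hub with \(2S\langle Y,f_j\rangle-65S^2\) on the cloud. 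Under \(Q_\tau\), and also under \(P_\Theta\) for \(\Theta\in H_M\) (which has no \(f\)-component), the cloud coordinates are i.i.d.\ \(N(0,\tau^2)\). Their maximum is about \(\tau\sqrt{2\log N}=8\sqrt2\,S\), with Gaussian concentration. The best cloud score is therefore about \(16\sqrt2\,S^2-65S^2<-40S^2\), while every hub score is at least \(-65R^2-2R\max_i|\langle Y,e_i\rangle|\). Since \(\log N\ge1024\log M\) forces \(S\ge4R\), a union bound gives \(Q_\tau\{\widehat\Theta_\lambda\in C_N\}\le N^{-2}\), and the same bound holds under \(P_\Theta\) for \(\Theta\in H_M\). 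This settles the cloud part of (v). On the hub, \(Q_\tau\) is invariant under permutations of \(e_0,\dots,e_M\), so conditional on landing in \(H_M\) the selection law is uniform. In particular \(\widehat\Theta=Re_0\) with probability \(\lesssim 1/M\).

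For (i), the \(\Delta\)-balls are singletons because pairwise distances are \(\sqrt2R\), \(\sqrt2S\) or \(\sqrt{R^2+S^2}\), all larger than \(R/3\). Hence \(q_\pi=1/2\), and \(\log(1/q_\pi)=\log2<\mathcal I+\log2\), so the relaxed bound is zero. For (ii), \(\rho=\mathbb E_{Q}\pi(\{\widehat\Theta\})\le \frac12Q\{\widehat\Theta=Re_0\}+\frac1{2M}\le C/M\). The information term is \(\mathcal I_\pi(Q_\tau)=\mathbb E\|\Theta\|^2/(2\tau^2)=\tfrac12\log M\). Thus \(\log(1/\rho)\ge\log M-C'\), and condition \eqref{eq:bayesian-nn-condition} holds with \(c\) close to \(1/2\) for large \(M\). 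This gives the lower bound \(cR\) in (iii) via Proposition~\ref{prop:bayesian-algorithmic-lower}.

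For the upper bound in (iii), the trivial bound \(d\le\sqrt2R\) on \(H_M\) suffices, together with a cloud exit probability of at most \(N^{-2}\) times \(d\lesssim S\); note \(S/N^2\to0\). For the Bayes-optimal claim, I will rerun the Fano argument with an arbitrary estimator on the symmetric subproblem. Conditioning on \(\Theta\ne Re_0\) gives a uniform prior on \(M\) points at mutual distance \(\sqrt2R\) with noise \(\tau=R/\sqrt{\log M}\), and standard Fano then gives risk \(\gtrsim R\).

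For (iv), the upper bound uses the estimator that outputs \(0\), with risk \(\le S\). For the lower bound, I will restrict to the uniform prior on \(C_N\) and apply Fano with \(\mathcal I=S^2/(2\tau^2)=\log N/128\). Balls of radius \(S/3\) in ambient space contain at most one \(Sf_j\), so \(\log(1/\rho)\ge\log N\). This makes the Bayes risk \(\gtrsim S\), and hence the minimax risk is too.

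For (v), Theorem~\ref{thm:nn-comparison} applies with \(\Omega=\lambda\|\cdot\|^2\). The penalty condition \(\Omega(\widehat\Theta)\ge\Omega(\Theta)\) is exact except on the cloud event, where the output norm \(S\) is even larger, and except when the prior point is in \(H_M\), where it is equality. So \(\varepsilon_{\rm cmp}=0\), and \eqref{eq:nn-to-gaussian-range-main} yields \(\mathcal G_{\rm alg}\gtrsim R^2/\tau=R\sqrt{\log M}\). For the matching upper bound,
\[
\mathcal G_{\rm alg}\le \mathbb E\sup_{s,u\in H_M}(X_s-X_u)+\text{(cloud event term)},
\]
and the cloud term is \(\lesssim N^{-1}S\sqrt{\log N}\) by Cauchy--Schwarz. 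The hub range is the range of \(M+1\) i.i.d.\ \(N(0,R^2)\) variables, which is \(\asymp R\sqrt{\log M}\). The envelope follows from Theorem~\ref{thm:pointwise-gaussian} on \(H_M\cup\{0\}\) with \(\mu\) uniform. There \(\Phi_\mu(x)\le 4R\sqrt{\log(M+2)}\), \(\sigma=R\), and \(M_{r_0,s_0}=O(1)\) once \(r_0=\Phi_{\mu,\ast}\) and \(s_0=R\). Finally, (vi) follows because the range over \(C_N\) alone is \(S\cdot\mathbb E(\max_j g_j-\min_j g_j)\asymp S\sqrt{\log N}\).

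The main obstacle I expect is the quantitative control of the decoder regions, uniformly for both \(Q_\tau\) and \(P_\Theta\): the cloud exclusion at rate \(N^{-2}\), and the bound \(Q\{\widehat\Theta=Re_0\}\lesssim1/M\). The constants \(\lambda=64\) and \(1024\) must be checked to make the maximum-versus-penalty comparison work. I would first attempt this with the crude bounds \(\max_j g_j\le\sqrt{2\log N}+\sqrt{2\log(N^2)}\) and a hub maximum \(\le\tau\sqrt{6\log M}\), verifying the inequality with room to spare.
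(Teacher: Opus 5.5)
Your proposal follows the paper's proof essentially item by item. Both rewrite the decoder as a score maximization, exclude the cloud by a Gaussian union bound under $Q_\tau$ and under $P_\Theta$, and use hub exchangeability to get $\rho_{\Delta,Q_\tau}\le C/M$ against $\mathcal I_\pi(Q_\tau)=\frac12\log M$. Both use Fano on $\{Re_1,\dots,Re_M\}$ and on $C_N$, and apply Theorem~\ref{thm:nn-comparison} with $\varepsilon_{\rm cmp}=0$; the clean reason is that $\|\Theta\|=R$ almost surely while every output has norm $R$ or $S\ge R$. Both finish with Cauchy--Schwarz for the cloud term and Theorem~\ref{thm:pointwise-gaussian} on $H_M\cup\{0\}$. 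Your small deviations are harmless. In (iv) you apply Proposition~\ref{prop:bayesian-algorithmic-lower} with ambient $S/3$-balls; its data-processing proof does not need $T$-valued estimators. In (vi) you use the range inside $C_N$. You skip the family-of-priors remark in (iii), which the paper handles informally by relocating the hub mass within $H_M$.

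The step that needs repair is the cloud exclusion you flagged: the crude verification you propose does not give $N^{-2}$.
\begin{itemize}
\item \emph{The hub bound is at the wrong probability scale.} The bound $\max_i|\langle Y,e_i\rangle|\le\tau\sqrt{6\log M}$ fails with probability of order $M^{-2}$. This is far larger than $N^{-2}\le M^{-2048}$.
\item \emph{An $M$-dependent bound is not enough.} Since $S/R=\frac18\sqrt{\log N/\log M}$ is unbounded in $N$ at fixed $M$, neither $\sqrt2R+\mathbb P(\text{cloud})\cdot 2S\le CR$ in (iii) nor the negligibility of $\mathbb P(\text{cloud})^{1/2}S\sqrt{\log N}$ against $R\sqrt{\log M}$ in (v) would follow uniformly.
\item \emph{Controlling one hub coordinate does not fix it with your cloud bound.} If you control $Z_0$ at the $N^{-2}$ level, the score of $Re_0$ can drop to $-65R^2-32RS\approx-12.06S^2$ when $S=4R$. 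Your concentration bound $(\sqrt2+2)\sqrt{\log N}$ on the cloud maximum only gives a best cloud score $\le-10.37S^2$, so nothing separates.
\end{itemize}

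There are two fixes.
\begin{itemize}
\item \emph{Sharper split.} Use the direct union bound $\max_jZ_j\le\sqrt{8\log N}$, which fails with probability at most $N^{-3}$. The cloud score is then $\le-19.7S^2$.
\item \emph{No split (the paper's route).} Compare each $Sf_j$ with a single hub point. $Sf_j$ beats $Re_0$ only if $2\tau(SZ_j-RZ_0)\ge65(S^2-R^2)$, and the left side is centered Gaussian with variance $4\tau^2(S^2+R^2)$. With $S\ge4R$ and $S/\tau=\sqrt{\log N}/8$, each event has probability at most $N^{-6.8}$, so the union over $j$ is at most $N^{-5.8}$. Under $P_\Theta$ with $\Theta=Re_i$, compare with $Re_i$ itself; the threshold only rises to $65S^2-63R^2$.
\end{itemize}
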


\subsection{Minimax reformulation, information relaxation, and algorithmic robustness}
The next theorem isolates the minimax language behind the same example in a form that reveals algorithmic tuning information, but not an oracle geometric restriction.  It is motivated by the information-relaxation and adaptivity-gap viewpoint of \citet{maiti-xu-jamieson-2026}: in their online problem, adaptive sampling can uncover structural value-range information that a nonadaptive design does not possess.  The offline analogue considered here deliberately reveals a different object.  Rather than allowing the learner to replace the ambient atlas by the hub subatlas after being told a norm range for \(\theta^*\), we reveal only a penalty range for which the full-atlas norm-regularized nearest-neighbor rule is stable.  The resulting value is minimax after the penalty range has been revealed, but it remains algorithm-dependent because the feasible estimators are the members of the fixed full-atlas regularized-nearest-neighbor family.  This is why the statement is a reformulation of the algorithmic separation, not an intrinsic restricted-minimax theorem.

\begin{theorem}[Penalty-range information-relaxation reformulation]
\label{thm:penalty-range-information-relaxation}
Consider the construction of Theorem~\ref{thm:decision-aligned-separation} along any sequence with
\[
M\to\infty,
\qquad
N\to\infty,
\qquad
\frac{\log N}{\log M}\to\infty .
\]
Equivalently, in the notation of that theorem, \(S/R=(1/8)\sqrt{\log N/\log M}\to\infty\).  For every \(\lambda\ge0\), let the full-atlas norm-regularized nearest-neighbor rule be
\begin{equation}\label{eq:lambda-family-penalty-revelation}
\widehat\Theta_\lambda(Y)
\in
\arg\min_{a\in T_{M,N}}\{\|Y-a\|_2^2+\lambda\|a\|_2^2\},
\end{equation}
with an arbitrary deterministic tie-breaking rule.  Define the critical penalty
\[
\lambda_{\rm c}:=16\sqrt2-1,
\]
and, for a fixed \(\eta\in(0,\lambda_{\rm c})\), define the separated penalty ranges
\[
\Lambda_{\rm opt}(\eta):=[\lambda_{\rm c}+\eta,\infty),
\qquad
\Lambda_{\rm under}(\eta):=[0,\lambda_{\rm c}-\eta].
\]
For a nonempty penalty range \(\Lambda\subset[0,\infty)\), define the penalty-revealed algorithmic minimax value over the low-loss hub regime by
\begin{equation}\label{eq:penalty-revealed-minimax-value}
\mathfrak R_{\rm RNN}^{H}(\Lambda)
:=
\inf_{\lambda\in\Lambda}
\sup_{\theta\in H_M}
\mathbb E_\theta d\bigl(\theta,\widehat\Theta_\lambda(Y)\bigr).
\end{equation}
The estimator in \eqref{eq:penalty-revealed-minimax-value} is not allowed to restrict the feasible set to \(H_M\): for every \(\lambda\), it is exactly the full-atlas rule \eqref{eq:lambda-family-penalty-revelation} over \(T_{M,N}\).  The revealed side information is only that the penalty lies in \(\Lambda\).

Then the penalty transition is sharp up to the arbitrary margin \(\eta\).  There is a constant \(a_\eta>0\), depending only on \(\eta\), such that, for all sufficiently large \(M,N\),
\begin{equation}\label{eq:opt-penalty-cloud-suppression}
\sup_{\lambda\in\Lambda_{\rm opt}(\eta)}
\left[
Q_\tau\{\widehat\Theta_\lambda(Y)\in C_N\}
+
\sup_{\theta\in H_M}P_\theta\{\widehat\Theta_\lambda(Y)\in C_N\}
\right]
\le N^{-a_\eta},
\end{equation}
where \(Q_\tau=N(0,\tau^2I_{M+N+1})\) and \(P_\theta=N(\theta,
\tau^2I_{M+N+1})\).  Consequently, for every \(\lambda\in\Lambda_{\rm opt}(\eta)\), the ghost mass at \(\Delta=R/3\) satisfies
\begin{equation}\label{eq:opt-penalty-ghost-mass}
\rho_{\Delta,Q_\tau}(\widehat\Theta_\lambda)
=
\mathbb E_{Y\sim Q_\tau}\pi\bigl(B_d(\widehat\Theta_\lambda(Y),\Delta)\bigr)
\le \frac{C}{M}.
\end{equation}
In the opposite, under-regularized range,
\begin{equation}\label{eq:under-penalty-cloud-selection}
\inf_{\lambda\in\Lambda_{\rm under}(\eta)}
\inf_{\theta\in H_M}
P_\theta\{\widehat\Theta_\lambda(Y)\in C_N\}
\longrightarrow 1 .
\end{equation}
Therefore, for constants \(0<c_\eta<C_\eta<\infty\), depending only on the fixed margin \(\eta\),
\begin{equation}\label{eq:penalty-minimax-gap}
c_\eta R
\le
\mathfrak R_{\rm RNN}^{H}(\Lambda_{\rm opt}(\eta))
\le
C_\eta R,
\qquad
c_\eta S
\le
\mathfrak R_{\rm RNN}^{H}(\Lambda_{\rm under}(\eta))
\le
C_\eta S .
\end{equation}
The same ambient problem still has full-class minimax risk
\begin{equation}\label{eq:penalty-full-minimax}
\mathfrak R_{\rm mm}^{\rm full}(T_{M,N},\tau)
:=
\inf_{\widehat\theta}
\sup_{\theta\in T_{M,N}}
\mathbb E_\theta d(\theta,\widehat\theta(Y))
\asymp S,
\end{equation}
where the infimum is over all measurable estimators with values in the ambient Euclidean space.  Hence
\[
\frac{\mathfrak R_{\rm RNN}^{H}(\Lambda_{\rm under}(\eta))}
     {\mathfrak R_{\rm RNN}^{H}(\Lambda_{\rm opt}(\eta))}
\asymp
\frac{\mathfrak R_{\rm mm}^{\rm full}(T_{M,N},\tau)}
     {\mathfrak R_{\rm RNN}^{H}(\Lambda_{\rm opt}(\eta))}
\asymp
\frac{S}{R}
=
\frac18\sqrt{\frac{\log N}{\log M}}.
\]
For the associated Gaussian field \(X_t=\langle Z,t\rangle\), every \(\lambda\in\Lambda_{\rm opt}(\eta)\) satisfies the approximate comparison condition \eqref{eq:comparison-decoder-condition} with \(\Omega(t)=\lambda\|t\|_2^2\) and \(\varepsilon_{\rm cmp}=0\).  Thus Theorem~\ref{thm:nn-comparison}, together with \eqref{eq:opt-penalty-ghost-mass}, gives
\begin{equation}\label{eq:penalty-revealed-gaussian-alg}
\mathcal G_{\rm alg}(\lambda)
:=
\mathbb E_{\Theta,Z}
\bigl[X_{\widehat\Theta_\lambda(\Theta+\tau Z)}-X_\Theta\bigr]
\asymp
R\sqrt{\log M},
\qquad
\lambda\in\Lambda_{\rm opt}(\eta),
\end{equation}
where \(\Theta\sim\pi\) is the prior of Theorem~\ref{thm:decision-aligned-separation}.  This matches the hub Gaussian range
\[
\Gamma_H:=\mathbb E\sup_{s,u\in H_M}(X_s-X_u)\asymp R\sqrt{\log M},
\]
whereas the full ambient Gaussian range remains
\[
\Gamma_{\rm full}:=
\mathbb E\sup_{s,u\in T_{M,N}}(X_s-X_u)
\asymp
S\sqrt{\log N}
=
\frac18R\frac{\log N}{\sqrt{\log M}}.
\]
Along \(N=M^K\) with \(K\to\infty\), the penalty-revealed minimax-risk gap is \(\asymp\sqrt K\), and the Gaussian-range gap is \(\asymp K\).
\end{theorem}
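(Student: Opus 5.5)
The plan is to reduce everything to an explicit comparison of \emph{hub scores} and \emph{cloud scores}. Expanding the objective, the decoder minimizes \(-2\langle Y,a\rangle+(1+\lambda)\|a\|^2\) over \(a\in T_{M,N}\). This gives the two score families
\[
h_i:=(1+\lambda)R^2-2R\langle Y,e_i\rangle,
\qquad
k_j:=(1+\lambda)S^2-2S\langle Y,f_j\rangle .
\]
Under \(Q_\tau\), and also under \(P_\theta\) for \(\theta\in H_M\) (since \(\theta\perp f_j\)), the variables \(\langle Y,f_j\rangle\) are i.i.d.\ \(N(0,\tau^2)\). Standard Gaussian maximal concentration then gives, with probability \(1-N^{-a}\),
\[
\max_j\langle Y,f_j\rangle=\tau\sqrt{2\log N}\,(1+o(1)).
\]
Using \(S=\tau\sqrt{\log N}/8\), this yields
\[
\min_j k_j=\tau^2\log N\Bigl[\tfrac{1+\lambda}{64}-\tfrac{\sqrt2}{4}+o(1)\Bigr].
\]
The bracket vanishes exactly at \(1+\lambda=16\sqrt2\), which is \(\lambda_{\rm c}\). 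By contrast, \(|\min_i h_i|\le C(1+\lambda)\tau^2\log M\) with high probability, both under \(Q_\tau\) and under \(P_\theta\) (the shift \(-2R^2\) for the true index is also \(O(\tau^2\log M)\)).

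To handle the unbounded range \(\Lambda_{\rm opt}(\eta)\) uniformly, I would note that for each \(i,j\) the difference \(k_j-h_i\) is increasing in \(\lambda\), because \(S>R\). The cloud-suppression event is therefore monotone in \(\lambda\), and it suffices to treat \(\lambda=\lambda_{\rm c}+\eta\). There the margin \(\frac{\eta}{64}\tau^2\log N\) dominates \(C\tau^2\log M\), since \(\log N/\log M\to\infty\); this gives \eqref{eq:opt-penalty-cloud-suppression} with \(a_\eta\) coming from a Gaussian tail at deviation \(\asymp\eta\sqrt{\log N}\). Symmetrically, for \(\lambda\le\lambda_{\rm c}-\eta\) the minimal cloud score is \(\le -c_\eta\tau^2\log N\), while every hub score is \(\ge -C\tau^2\log M\) (the term \((1+\lambda)R^2\) is nonnegative). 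This gives \eqref{eq:under-penalty-cloud-selection} uniformly over the compact range.

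For the ghost mass \eqref{eq:opt-penalty-ghost-mass}, I would use that under \(Q_\tau\) the hub points have equal norms and \(Z\) is rotation invariant. Hence, conditional on selecting the hub, the output is uniform on the \(M+1\) hub points up to tie-breaking, which has probability zero. Every \(\Delta\)-ball is a singleton, so
\[
\rho\le \tfrac12\cdot\tfrac1{M+1}+\tfrac1{2M}+N^{-a_\eta}\le C/M .
\]

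For the risk bounds \eqref{eq:penalty-minimax-gap}, the two ranges are treated separately.
\begin{itemize}[leftmargin=1.5em]
\item \emph{Optimal range, upper bound.} On the hub, \(d\le\sqrt2R\); on the cloud, \(d\le S+R\) but only with probability \(N^{-a_\eta}\). Since \(S N^{-a_\eta}=o(R)\), the risk is at most \(CR\).
\item \emph{Optimal range, lower bound.} Bound \(\sup_\theta\) from below by the \(\pi\)-average and apply Proposition~\ref{prop:bayesian-algorithmic-lower} with \(Q=Q_\tau\). Here \(\mathcal I_\pi(Q_\tau)=\|\,\cdot\,\|^2\)-average\(/(2\tau^2)=R^2/(2\tau^2)=\tfrac12\log M\), and \(\log(1/\rho)\ge\log M-C\), so the ratio is at most \(1/2+o(1)\). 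This gives risk \(\ge cR\).
\item \emph{Under-regularized range.} Cloud selection has probability \(1-o(1)\) and forces \(d=\sqrt{S^2+R^2}\ge S\), while always \(d\le S+R\le2S\).
\end{itemize}
The full minimax statement \eqref{eq:penalty-full-minimax} is exactly Theorem~\ref{thm:decision-aligned-separation}(iv), and the displayed ratios follow by division.

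For the Gaussian part, the comparison condition holds with \(\varepsilon_{\rm cmp}=0\) for the following reason. \(\widehat\Theta_\lambda\) is an exact minimizer, so \(\varepsilon_{\rm opt}=0\). Also \(\Theta\in H_M\) has \(\|\Theta\|=R\), and every output has norm \(R\) or \(S>R\), so \(\Omega(\widehat\Theta)\ge\Omega(\Theta)\).

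\emph{Lower bound.} Theorem~\ref{thm:nn-comparison} with \(m=0\), \(\mathcal I_{\pi,0}=\tfrac12\log M\) and the ghost-mass bound verifies \eqref{eq:bayesian-nn-condition} for some fixed \(c\). This gives \(\mathcal G_{\rm alg}\ge c^2R^2/(18\tau)\asymp R\sqrt{\log M}\).

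\emph{Upper bound.} Split on the hub/cloud event. On the hub event, \(X_{\widehat\Theta}-X_\Theta\le\sup_{s,u\in H_M}(X_s-X_u)\), whose expectation is \(\Gamma_H\asymp R\sqrt{\log M}\) (a maximum of \(M+1\) Gaussians of variance \(R^2\)). On the cloud event, Cauchy--Schwarz gives a bound \(\le N^{-a_\eta/2}\cdot C S\sqrt{\log N}=o(R)\). I expect the cloud event to need a little care, since it is defined by \(Z\) itself; Cauchy--Schwarz against \(\mathbb E(\sup|X|)^2\lesssim S^2\log N\) avoids any independence issue. Finally, \(\Gamma_{\rm full}\asymp S\sqrt{\log N}\) follows from Theorem~\ref{thm:decision-aligned-separation}(vi) for the lower bound and the elementary maximum bound over \(M+N+1\) points of norm \(\le S\) for the upper bound.

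The step I expect to be the main obstacle is getting \emph{uniform} high-probability control with exponent \(a_\eta\) at the sharp threshold. This needs both lower and upper concentration of \(\max_j\langle Y,f_j\rangle\) at relative precision \(\asymp\eta\), which means choosing the deviation \(u=c\eta\sqrt{\log N}\) in the Gaussian tail, together with a union bound over the hub that costs only \(\log M\). The monotonicity-in-\(\lambda\) reduction is what prevents the unbounded penalty range from causing trouble.
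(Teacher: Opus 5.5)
Your proof is correct. Its skeleton matches the paper's: the hub-versus-cloud score comparison, exchangeability for the ghost mass, the hub/cloud split with Cauchy--Schwarz for $\mathcal G_{\rm alg}$, Fano on the cloud for the full minimax risk, and the max-concentration argument in the under-regularized range. Two steps take a different route.

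\textbf{Cloud suppression.} The paper compares each cloud point with one fixed hub point ($Re_0$ under $Q_\tau$, the true $Re_i$ under $P_\theta$). The hub noise is then absorbed into the variance $4\tau^2(S^2+R^2)$ of $SZ_j-RZ_0$, and one Gaussian tail plus a union bound over $j$ gives $N^{-a_\eta}$. Uniformity in $\lambda$ is automatic because the threshold $(1+\lambda)(S^2-R^2)$ grows with $\lambda$. You instead locate $\min_j k_j$ to first order. This makes $1+\lambda=16\sqrt2$ visible in a single computation that serves both regimes. You then reduce to $\lambda=\lambda_{\rm c}+\eta$ by pathwise monotonicity, which is valid since $\min_jk_j-\min_ih_i=\min_j\max_i(k_j-h_i)$.

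The price is that the hub side must also hold at confidence $1-N^{-a_\eta}$. Your first-paragraph bound on $\min_i h_i$ at scale $\tau^2\log M$ has a failure probability controlled only in terms of $M$. That is not $O(N^{-a})$ along all sequences with $\log N/\log M\to\infty$. As your last paragraph suggests, take the hub deviation at level $\sqrt{\log N}$ instead. This costs $R\tau\sqrt{\log N}=\tau^2\sqrt{\log M\log N}=o(\tau^2\log N)$, which the margin $\eta\tau^2\log N/64$ absorbs; a single hub point suffices for this direction.

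\textbf{Lower bound $c_\eta R\le\mathfrak R_{\rm RNN}^{H}(\Lambda_{\rm opt}(\eta))$.} The paper runs classical Fano with the uniform prior on $\{Re_1,\ldots,Re_M\}$ over all measurable estimators. That is stronger: even an oracle restricted to $H_M$ cannot beat $R$. You instead average under $\pi$ and apply Proposition~\ref{prop:bayesian-algorithmic-lower} to each $\widehat\Theta_\lambda$ through the ghost mass. Your version is estimator-specific, but it is closer to the spirit of the theorem. Its uniformity in $\lambda$ is immediate: exchangeability gives $Q_\tau\{\widehat\Theta_\lambda=Re_k\}\le 1/(M+1)$ for every $k$ and every $\lambda\ge0$. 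Hence $\rho_{\Delta,Q_\tau}(\widehat\Theta_\lambda)\le 1/(M+1)$ with no cloud-suppression input at all.
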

\begin{remark}[Penalty-range information and algorithmic robustness]
The algorithmic-minimax separation central to the paper can therefore be reformulated as penalty-range information relaxation: the learner is told a stable range for the regularization parameter in a fixed full-atlas algorithmic family, not a norm shell on which arbitrary subatlas ERM could be run.  This distinction is essential.  Revealing the hub norm shell would change the statistical problem by supplying an oracle restricted class; revealing \(\Lambda_{\rm opt}(\eta)\) instead supplies only the kind of tuning information that the chosen algorithm can use.  The formulation is minimax, since \eqref{eq:penalty-revealed-minimax-value} still takes a worst-case risk over the low-loss hub regime after the side information is supplied; it is also algorithm-dependent, since the learner may optimize over \(\lambda\in\Lambda\) but may not leave the full-atlas regularized-nearest-neighbor family. Borrowing the related but distinct terminology of information relaxation from stochastic dynamic programs \citep{brown-smith-sun-2010}, which is particularly suitable here, we refer to this reformulation as \emph{penalty-range information relaxation}.

This formulation suggests a broader inquiry into algorithmic information gaps.  The offline penalty-revelation gap proved here and the online adaptivity gap of \citet{maiti-xu-jamieson-2026} are two complementary examples: in both cases, the relevant comparison depends not only on the statistical model class, but also on which structural information is made available to a particular algorithmic family.  This is why the present theorem is a minimax reformulation, rather than a replacement for ordinary minimax theory: it separates information relaxation for a procedure from oracle revelation of a smaller model class.

The viewpoint is of independent interest beyond the overparameterized regime emphasized in the example.  Classical penalized model selection already uses penalties to select among candidate dimensions without revealing the true model in advance \citep{barron-birge-massart-1999,birge-massart-2007}.  Sparse linear regression provides an especially familiar high-dimensional case.  The Lasso \citep{tibshirani1996lasso} is not merely a computational relaxation of best-subset search; under restricted-eigenvalue or compatibility conditions, Lasso-type estimators achieve oracle inequalities with a penalty of order \(\sigma\sqrt{(\log p)/n}\), so the tuning rule does not require the exact sparsity level \(s\), even though the resulting rate depends on \(s\) \citep{bickel-ritov-tsybakov-2009,buhlmann-van-de-geer-2011}.  Square-root Lasso makes this robustness even more explicit by using a pivotal penalty that does not require prior knowledge of the noise standard deviation \citep{belloni-chernozhukov-wang-2011}.  More broadly, decomposable regularized \(M\)-estimators use regularizer geometry and restricted strong convexity to obtain rates across sparse, group-sparse, and low-rank models without being handed the exact active subatlas \citep{negahban-ravikumar-wainwright-yu-2012,hastie-tibshirani-wainwright-2015,wainwright-2019-highdim}.  Minimax analyses of sparse regression make the same distinction visible: computationally tractable \(\ell_1\) relaxations can attain sparse rates under suitable design assumptions, whereas oracle constrained least squares is formulated by first specifying the \(\ell_0\)-class \citep{raskutti-wainwright-yu-2011}.  In this algorithm-dependent information sense, the Lasso's appeal is not only computational but also statistical and information-theoretic: it needs a stable penalty range rather than exact model-size information.  The boundary between traditional high-dimensional statistics and modern overparameterized models is therefore methodological rather than sharp; both ask when a stable algorithmic bias can replace explicit oracle information.
\end{remark}

\subsection{Organization}

Section~\ref{sec:gaussian-proof} proves the pointwise Gaussian majorizing-measure theorem. Section~\ref{sec:proof-bayesian-lower} proves the Bayesian algorithmic lower and upper bounds. Section~\ref{sec:hub-cloud-algorithmic-ghost} proves the separating example and gives its pointwise validation. Section~\ref{sec:minimax} develops the corresponding penalty-range minimax information-relaxation reformulation. Section~\ref{sec:summary} closes the main text. Appendix~\ref{sec:finite-rg} contains the finite-cutoff renormalization material. Appendix~\ref{sec:graph-background} contains the graph local-time application and cover-time comparison. Appendix~\ref{app:sudakov} records technical variants and Sudakov-type consequences. 

The appendices are intentionally separated from the main Gaussian and Bayesian results.  They record structural analogies and first finite examples, rather than claiming a complete renormalization group theory or a complete theory of local time, cover time, and blanket time.

\section{Proof of Pointwise Gaussian Majorizing-Measure Theorem}\label{sec:gaussian-proof}
The proof follows a subset-homogeneity and peeling strategy. First, for each fixed subset \(H\subseteq T\), we combine an anchored form of Fernique's majorizing-measure upper bound \citep{fernique1975regularite,talagrand1987regularity}, the ambient equivalence principle of pointwise dimension, and Borell--Tsirelson concentration. Second, we apply the one-parameter uniform pointwise peeling lemma twice: first to localize the Fernique--Talagrand integral \(\Phi_\mu(x)\), and then to localize the variance scale from \(R_\ast\) to \(\sigma(x)\). This avoids introducing a separate two-dimensional peeling corollary.

The following uniform pointwise peeling lemma was originally developed
for empirical processes in \cite{xu2025towards}. It identifies minimal
conditions for obtaining pointwise envelopes, up to negligible
double-logarithmic factors, without imposing unrealistic Bernstein-type
or rigid sub-root assumptions. Necessary and sufficient justifications
and discussions of data dependence are provided in
\cite{li-xu-pointwise-generalization-dnn}. Here we extend the lemma to
general indexed processes, including Gaussian processes.
\begin{lemma}[Uniform pointwise peeling for indexed processes]\label{lem:gaussian-peeling}
Let \(\{Z_x\}_{x\in T}\) be random variables, let \(d_0:T\to[0,R]\), and let \(\psi(r;\delta)\) be nondecreasing in \(r\) and nonincreasing in \(\delta\). Assume that for every fixed subset \(H\subseteq T\) and every \(\delta\in(0,1)\),
\begin{equation}\label{eq:subset-homogeneous-abstract}
\mathbb P\!\left(\sup_{x\in H} Z_x\le \sup_{x\in H}\psi(d_0(x);\delta)\right)\ge 1-\delta.
\end{equation}
Then for every \(r_0>0\), with probability at least \(1-\delta\), uniformly over all \(x\in T\),
\begin{equation}\label{eq:abstract-pointwise-peeling}
Z_x\le \psi\!\left(\max\{2d_0(x),r_0\};\frac{\delta}{1+\lceil\log_2^+(R/r_0)\rceil}\right),
\end{equation}
where \(\log_2^+(u)=\max\{\log_2 u,0\}\).
\end{lemma}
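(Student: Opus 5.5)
The plan is a standard dyadic peeling argument applied to the level sets of \(d_0\). Set \(K:=\lceil\log_2^+(R/r_0)\rceil\) and \(\delta':=\delta/(1+K)\).

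\textbf{Step 1: the shells.} Define
\[
H_0:=\{x\in T:\ d_0(x)\le r_0\},
\qquad
H_k:=\{x\in T:\ 2^{k-1}r_0<d_0(x)\le 2^k r_0\},\quad k=1,\ldots,K.
\]
Because \(d_0\le R\le 2^K r_0\), these \(1+K\) sets cover \(T\). If \(R\le r_0\), then \(K=0\) and only \(H_0\) appears. Each \(H_k\) is a deterministic subset of \(T\), since \(d_0\) is a fixed function and the shells do not depend on the randomness. This is exactly what lets us invoke the hypothesis \eqref{eq:subset-homogeneous-abstract} on each of them.

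\textbf{Step 2: one application of the hypothesis per shell, plus a union bound.} Apply \eqref{eq:subset-homogeneous-abstract} to \(H_k\) at confidence level \(\delta'\). Since \(\psi\) is nondecreasing in \(r\),
\[
\sup_{x\in H_k}\psi(d_0(x);\delta')\le \psi(2^k r_0;\delta').
\]
So with probability at least \(1-\delta'\) we have \(\sup_{x\in H_k}Z_x\le \psi(2^kr_0;\delta')\). Empty shells are handled trivially by the convention \(\sup\emptyset=-\infty\). A union bound over the \(1+K\) shells gives an event of probability at least \(1-\delta\) on which these bounds hold simultaneously for every \(k\).

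\textbf{Step 3: the pointwise comparison.} On that event, fix \(x\in T\) and let \(k\) be its shell.
\begin{itemize}
\item If \(k=0\), then \(2^0r_0=r_0\le\max\{2d_0(x),r_0\}\).
\item If \(k\ge1\), then \(2^k r_0=2\cdot 2^{k-1}r_0<2d_0(x)\).
\end{itemize}
In both cases \(2^kr_0\le\max\{2d_0(x),r_0\}\). Monotonicity of \(\psi\) in \(r\) then yields
\[
Z_x\le\psi\bigl(\max\{2d_0(x),r_0\};\delta'\bigr),
\]
which is \eqref{eq:abstract-pointwise-peeling}.

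\textbf{Step 4: a check on the confidence argument.} The monotonicity of \(\psi\) in \(\delta\) is not actually needed, because we apply the hypothesis directly at level \(\delta'\). It would only matter if one wanted to replace \(\delta'\) by a cruder level.

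The argument has no real obstacle. The only points needing care are the bookkeeping of shell indices, so that the count is exactly \(1+\lceil\log_2^+(R/r_0)\rceil\), and the observation that the shells are chosen deterministically, which is what makes the subset-level hypothesis applicable.
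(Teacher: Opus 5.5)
Your proposal is correct and follows the paper's proof essentially step for step. It uses the same dyadic shells \(H_0,\dots,H_m\) of \(d_0\), applies the subset hypothesis once per shell at level \(\delta/(m+1)\) with a union bound, and compares via \(2^j r_0\le\max\{2d_0(x),r_0\}\) using monotonicity of \(\psi\) in \(r\). Your side remark that monotonicity of \(\psi\) in \(\delta\) is never actually used is also accurate.
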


\begin{proof}
Let \(m:=\lceil\log_2^+(R/r_0)\rceil\). The sets
\[
H_0:=\{x:d_0(x)\le r_0\},
\qquad
H_j:=\{x:2^{j-1}r_0<d_0(x)\le 2^j r_0\}\quad (1\le j\le m)
\]
cover \(T\). Apply \eqref{eq:subset-homogeneous-abstract} to each \(H_j\) with confidence level \(\delta/(m+1)\), and take a union bound. On the resulting event, if \(x\in H_0\), then
\[
Z_x\le \psi\!\left(r_0;\frac{\delta}{m+1}\right)
\le \psi\!\left(\max\{2d_0(x),r_0\};\frac{\delta}{m+1}\right),
\]
and if \(x\in H_j\) for \(j\ge1\), then \(2^jr_0\le 2d_0(x)\), so
\[
Z_x\le \psi\!\left(2^jr_0;\frac{\delta}{m+1}\right)
\le \psi\!\left(\max\{2d_0(x),r_0\};\frac{\delta}{m+1}\right).
\]
This proves the claim.
\end{proof}

We also use the following localized version of the ambient equivalence principle of pointwise dimension from Lemma~7 of \cite{li-xu-pointwise-generalization-dnn}.

\begin{lemma}[Ambient equivalence for the localized integral]\label{lem:pd-equivalence}
Let \((T,d)\) be a finite metric space and let \(H\subseteq T\). Let \(p:T\to H\) be a nearest-point selector, so that \(d(y,p(y))=\min_{h\in H}d(y,h)\), and let \(\mu_H=p_\#\mu\) be the pushforward of an ambient prior \(\mu\in\Delta(T)\). Then, for every \(x\in H\) and every \(\varepsilon>0\),
\[
\mu_H(B_d(x,2\varepsilon))\ge \mu(B_d(x,\varepsilon)).
\]
Consequently, for the localized functional \(\Phi\) in \eqref{eq:pointwise-phi-def},
\begin{equation}\label{eq:ambient-equivalence-bound}
\Phi_{\mu_H}(x)\le 2\Phi_\mu(x),
\qquad x\in H.
\end{equation}
\end{lemma}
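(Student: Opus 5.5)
The plan has two parts: first the ball-mass inequality by a direct set inclusion, then the integral bound by a change of variables.

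For the inclusion, fix \(x\in H\) and \(\varepsilon>0\). Since \(\mu_H=p_\#\mu\), we have \(\mu_H(B_d(x,2\varepsilon))=\mu(p^{-1}(B_d(x,2\varepsilon)))\). It therefore suffices to show
\[
B_d(x,\varepsilon)\subseteq p^{-1}(B_d(x,2\varepsilon)).
\]
Take \(y\in T\) with \(d(x,y)\le\varepsilon\). Because \(x\in H\) and \(p\) is a nearest-point selector, \(d(y,p(y))\le d(y,x)\le\varepsilon\). The triangle inequality then gives \(d(x,p(y))\le d(x,y)+d(y,p(y))\le 2\varepsilon\). So \(p(y)\in B_d(x,2\varepsilon)\), which proves the mass inequality. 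Here we use that balls are defined with \(\le\); the argument is purely metric and needs no finiteness beyond the existence of the selector.

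For the integral bound, note that \(\mu\) and \(\mu_H\) are both centered at the same anchor structure, so \(\sigma(x)\) is the same quantity in both functionals. Substitute \(\varepsilon=2u\):
\[
\Phi_{\mu_H}(x)=\int_0^{4\sigma(x)}\sqrt{\log\tfrac{1}{\mu_H(B_d(x,\varepsilon))}}\,d\varepsilon
=2\int_0^{2\sigma(x)}\sqrt{\log\tfrac{1}{\mu_H(B_d(x,2u))}}\,du .
\]
By the mass inequality, \(\mu_H(B_d(x,2u))\ge\mu(B_d(x,u))\). Since \(t\mapsto\sqrt{\log(1/t)}\) is nonincreasing, the integrand is bounded by \(\sqrt{\log(1/\mu(B_d(x,u)))}\). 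This gives \(\Phi_{\mu_H}(x)\le 2\int_0^{2\sigma(x)}\sqrt{\log(1/\mu(B_d(x,u)))}\,du\). Extending the upper limit from \(2\sigma(x)\) to \(4\sigma(x)\) only adds a nonnegative integrand, so the result is at most \(2\Phi_\mu(x)\).

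The only points needing care, which I expect to be the main (mild) obstacle, are the following.
\begin{itemize}
\item Whether \(\sigma(x)\) refers to the same process in both functionals. It does, since \(\Phi_{\mu_H}\) is computed with the same metric \(d\) and the same anchor, now regarded as a point of \(H\).
\item The convention \(\sqrt{\log(1/0)}=+\infty\). If some \(\mu(B_d(x,u))=0\), the right-hand side is infinite and the bound is trivial. Otherwise the ball-mass inequality ensures the left-hand integrand is finite wherever the right-hand one is.
\end{itemize}
Measurability of the integrands is immediate, since on a finite metric space \(u\mapsto\mu(B_d(x,u))\) is a nondecreasing step function.
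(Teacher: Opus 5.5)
Your proof is correct and follows essentially the same route as the paper: the nearest-point triangle-inequality argument gives \(p(B_d(x,\varepsilon))\subseteq B_d(x,2\varepsilon)\), and then the change of variables \(u=2\varepsilon\) combined with monotonicity of \(t\mapsto\sqrt{\log(1/t)}\) yields the integral over \([0,2\sigma(x)]\). Extending that upper limit to \(4\sigma(x)\), exactly as the paper does, gives \(\Phi_{\mu_H}(x)\le 2\Phi_\mu(x)\).
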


\begin{proof}
If \(y\in B_d(x,\varepsilon)\), then
\[
d(p(y),x)\le d(p(y),y)+d(y,x)\le 2d(y,x)\le 2\varepsilon,
\]
because \(x\in H\) and \(p(y)\) is a nearest point in \(H\). Hence \(p(B_d(x,\varepsilon))\subseteq B_d(x,2\varepsilon)\), which gives the ball-mass inequality. Therefore
\[
\Phi_{\mu_H}(x)
=\int_0^{4\sigma(x)}\sqrt{\log\frac1{\mu_H(B_d(x,u))}}\,du
\le
2\int_0^{2\sigma(x)}\sqrt{\log\frac1{\mu(B_d(x,\varepsilon))}}\,d\varepsilon
\le 2\Phi_\mu(x),
\]
where we used the change of variables \(u=2\varepsilon\).
\end{proof}

\begin{lemma}[Anchored subset majorizing-measure bound]\label{lem:anchored-subset-mm}
For every fixed subset \(H\subseteq T\), every ambient prior \(\mu\in\Delta(T)\), and
\[
\sigma_H:=\sup_{x\in H}\sigma(x),
\]
there is an absolute constant \(C>0\) such that
\begin{equation}\label{eq:anchored-subset-mm}
\mathbb E\sup_{x\in H}X_x
\le
C\sup_{x\in H}\Phi_\mu(x)+C\sigma_H.
\end{equation}
\end{lemma}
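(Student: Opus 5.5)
The plan is to reduce Lemma~\ref{lem:anchored-subset-mm} to Fernique's majorizing-measure upper bound on the subset \(H\cup\{x_0\}\), using a prior supported on that subset. The localized cutoff \(4\sigma(x)\) is handled by the anchor term \(C\sigma_H\). Since \(X_{x_0}=0\), we have \(\mathbb E\sup_{x\in H}X_x\le \mathbb E\sup_{x\in H'}X_x\) with \(H':=H\cup\{x_0\}\). This supremum is nonnegative, and working with it lets us use the anchor freely. Fernique's theorem, in the form \(\mathbb E\sup_{x\in H'}X_x\le C\sup_{x\in H'}\int_0^{D}\sqrt{\log(1/\nu(B_d(x,\varepsilon)))}\,d\varepsilon\) for any \(\nu\in\Delta(H')\), applies with \(D=\operatorname{diam}(H')\le 2\sigma_H\). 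We then need a prior \(\nu\) on \(H'\) whose full-radius integral is controlled by \(\sup_{x\in H}\Phi_\mu(x)+\sigma_H\).

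The first step transfers the ambient prior to \(H\). Let \(\mu_H=p_\#\mu\) be the nearest-point pushforward of Lemma~\ref{lem:pd-equivalence}. Set \(\nu:=\tfrac12\mu_H+\tfrac12\delta_{x_0}\). For \(x\in H\) this gives \(\nu(B_d(x,\varepsilon))\ge\tfrac12\mu(B_d(x,\varepsilon/2))\). It also gives \(\nu(B_d(x,\varepsilon))\ge\tfrac12\) once \(\varepsilon\ge\sigma(x)=d(x,x_0)\).

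The second step splits the integral at \(\varepsilon=\sigma(x)\) for each \(x\in H\). On \([0,\sigma(x)]\), use \(\sqrt{\log(2/a)}\le\sqrt{\log 2}+\sqrt{\log(1/a)}\). This bounds the piece by \(\sigma(x)\sqrt{\log2}+\int_0^{\sigma(x)}\sqrt{\log(1/\mu(B_d(x,\varepsilon/2)))}\,d\varepsilon\). The change of variables \(u=\varepsilon/2\) then bounds the last integral by \(2\int_0^{\sigma(x)/2}\cdots\le 2\Phi_\mu(x)\), since the cutoff \(4\sigma(x)\) is at least \(\sigma(x)/2\). On \([\sigma(x),D]\) the integrand is at most \(\sqrt{\log 2}\), so that piece is at most \(2\sigma_H\sqrt{\log2}\). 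The anchor point \(x_0\) itself has \(\nu(\{x_0\})\ge\tfrac12\), so its integral is at most \(D\sqrt{\log 2}\le 2\sigma_H\sqrt{\log2}\). Taking the supremum over \(H'\) yields \(C\sup_{x\in H}\Phi_\mu(x)+C\sigma_H\), which is the claim.

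The main obstacle is simply invoking the right form of the classical Fernique bound: finite index set, arbitrary probability measure on the index set, and upper limit equal to the diameter. I take this as the standard result cited from \citep{fernique1975regularite,talagrand1987regularity}. If only the version over \(T\) is available, I would apply it to the restricted process on \(H'\), which is again a centered Gaussian process with the induced metric. No further subtlety is expected.
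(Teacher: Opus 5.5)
Your proposal is correct and follows essentially the same route as the paper. Both apply Fernique's bound to the restricted process on $H\cup\{x_0\}$ with the mixed prior $\tfrac12\delta_{x_0}+\tfrac12 p_\#\mu$ and upper limit $\operatorname{diam}\le 2\sigma_H$, using the ambient-equivalence ball inequality below the split and the anchor mass $\tfrac12$ above it. The differences are cosmetic: you split at $\sigma(x)$ instead of $4\sigma(x)\wedge 2\sigma_H$, and you push forward onto $H$ rather than onto $H\cup\{x_0\}$.
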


\begin{proof}
Let \(H_0:=H\cup\{x_0\}\), and let \(p:T\to H_0\) be a nearest-point selector. Define the probability measure
\[
\bar\mu_H:=\frac12\delta_{x_0}+\frac12 p_\#\mu
\]
on \(H_0\). Fernique's majorizing-measure upper bound \citep{fernique1975regularite,talagrand1987regularity}, applied to the restricted process on \(H_0\), gives
\[
\mathbb E\sup_{x\in H}X_x
\le
C\sup_{x\in H_0}
\int_0^{2\sigma_H}
\sqrt{\log\frac1{\bar\mu_H(B_d(x,u))}}\,du,
\]
since \(\operatorname{diam}(H_0,d)\le 2\sigma_H\). For \(x=x_0\), the integral is at most \(2\sigma_H\sqrt{\log2}\), because \(\bar\mu_H\{x_0\}\ge1/2\). For \(x\in H\), split the integral at \(4\sigma(x)\wedge 2\sigma_H\).  If \(4\sigma(x)>2\sigma_H\), the second interval below is empty.  On \([4\sigma(x),2\sigma_H]\), the ball \(B_d(x,u)\) contains \(x_0\), hence has \(\bar\mu_H\)-mass at least \(1/2\), so this part is bounded by \(C\sigma_H\). On \([0,4\sigma(x)]\), Lemma~\ref{lem:pd-equivalence} and the extra factor \(1/2\) in \(\bar\mu_H\) give a bound of order \(\Phi_\mu(x)+\sigma(x)\). Taking the supremum over \(x\in H\) proves \eqref{eq:anchored-subset-mm}.
\end{proof}

\begin{lemma}[Subset-homogeneous Gaussian estimate]\label{lem:subset-gaussian-estimate}
For every fixed subset \(H\subseteq T\), every ambient prior \(\mu\in\Delta(T)\), and every \(\delta\in(0,1)\),
\begin{equation}\label{eq:subset-gaussian-estimate}
\mathbb P\left(
\sup_{x\in H}X_x
\le
C\sup_{x\in H}\Phi_\mu(x)
+
C\sigma_H\sqrt{\log(e/\delta)}
\right)
\ge 1-\delta,
\end{equation}
where \(\sigma_H=\sup_{x\in H}\sigma(x)\).
\end{lemma}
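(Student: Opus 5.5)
The plan is to combine Lemma~\ref{lem:anchored-subset-mm} with Borell--Tsirelson--Ibragimov--Sudakov concentration for the supremum of a Gaussian process over the finite set \(H\). We fix \(H\subseteq T\) and \(\mu\). If \(H\) is empty, or \(\sigma_H=0\) (so that \(X_x=0\) a.s.\ for all \(x\in H\)), the statement is trivial. Otherwise we set \(S_H:=\sup_{x\in H}X_x\). This is the supremum of finitely many jointly Gaussian centered variables, so it is a \(\sigma_H\)-Lipschitz function of a standard Gaussian vector: write \(X_x=\langle g,u_x\rangle\) with \(\|u_x\|=\sigma(x)\le\sigma_H\). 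The Gaussian concentration inequality then gives, for every \(t\ge0\),
\[
\mathbb P\bigl(S_H\ge \mathbb E S_H+t\bigr)\le e^{-t^2/(2\sigma_H^2)}.
\]

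Choosing \(t=\sigma_H\sqrt{2\log(1/\delta)}\) shows that, with probability at least \(1-\delta\),
\[
S_H\le \mathbb E S_H+\sigma_H\sqrt{2\log(1/\delta)}.
\]
Next we insert \eqref{eq:anchored-subset-mm}, which gives \(\mathbb E S_H\le C\sup_{x\in H}\Phi_\mu(x)+C\sigma_H\). It remains to absorb the constant-size variance term into the tail term. Since \(\log(e/\delta)\ge 1\) for \(\delta\in(0,1)\), we have \(\sigma_H\le\sigma_H\sqrt{\log(e/\delta)}\). Also \(\sqrt{2\log(1/\delta)}\le\sqrt2\,\sqrt{\log(e/\delta)}\). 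Enlarging \(C\) to \(\max\{C,C+\sqrt2\}\) therefore yields \eqref{eq:subset-gaussian-estimate}.

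The only point needing care is that the expectation \(\mathbb E S_H\) might be negative or be taken over a non-anchored subset. Neither affects the argument, because concentration holds for any finite index set and Lemma~\ref{lem:anchored-subset-mm} already accounts for the anchor. There is no real obstacle: the substantive work is in Lemma~\ref{lem:anchored-subset-mm}, and this lemma is its routine high-probability upgrade. Its role is to supply the subset-homogeneous input \eqref{eq:subset-homogeneous-abstract} for the two subsequent applications of Lemma~\ref{lem:gaussian-peeling}.
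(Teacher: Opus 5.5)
Your proof is correct and follows the paper's own argument: the paper likewise combines the expectation bound of Lemma~\ref{lem:anchored-subset-mm} with Borell--Tsirelson concentration for the \(\sigma_H\)-Lipschitz map \(g\mapsto\sup_{x\in H}\langle g,v_x\rangle\). It then absorbs the \(C\sigma_H\) term and the \(\sigma_H\sqrt{2\log(1/\delta)}\) deviation into \(C\sigma_H\sqrt{\log(e/\delta)}\), exactly as you do.
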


\begin{proof}
By Lemma~\ref{lem:anchored-subset-mm},
\[
\mathbb E\sup_{x\in H}X_x\le C\sup_{x\in H}\Phi_\mu(x)+C\sigma_H.
\]
Realize \(X_x=\langle g,v_x\rangle\) as an isonormal process on a Hilbert space. The map \(g\mapsto\sup_{x\in H}\langle g,v_x\rangle\) is \(\sigma_H\)-Lipschitz. Borell--Tsirelson concentration \citep{borell-1975} therefore gives the desired inequality, after absorbing the expectation term \(C\sigma_H\) into \(C\sigma_H\sqrt{\log(e/\delta)}\).
\end{proof}

\begin{proof}[Proof of Theorem~\ref{thm:pointwise-gaussian}]
Fix an arbitrary subset \(H\subseteq T\). Applying Lemma~\ref{lem:gaussian-peeling} on the index set \(H\), with \(Z_x=X_x\), \(d_0(x)=\Phi_\mu(x)\), \(R=\Phi_{\mu,\ast}\), and using the subset estimate \eqref{eq:subset-gaussian-estimate}, yields the following: for every \(r_0>0\) and every \(\delta\in(0,1)\), with probability at least \(1-\delta\),
\begin{equation}\label{eq:first-peeling-output}
\forall x\in H:\qquad
X_x
\le
C\max\{\Phi_\mu(x),r_0\}
+
C\sigma_H
\sqrt{\log\!\left(\frac{eM_\Phi}{\delta}\right)},
\end{equation}
where
\[
M_\Phi:=1+\lceil\log_2^+(\Phi_{\mu,\ast}/r_0)\rceil.
\]
Since \(H\) was arbitrary, \eqref{eq:first-peeling-output} is now a subset-homogeneous estimate for the centered residual
\[
Y_x:=X_x-C\max\{\Phi_\mu(x),r_0\}
\]
with complexity \(\sigma(x)\). Applying Lemma~\ref{lem:gaussian-peeling} a second time, now with \(Z_x=Y_x\), \(d_0(x)=\sigma(x)\), \(R=R_\ast\), and
\[
\psi(s;\delta):=Cs\sqrt{\log\!\left(\frac{eM_\Phi}{\delta}\right)},
\]
gives, with probability at least \(1-\delta\),
\[
\forall x\in T:\qquad
Y_x
\le
C\max\{\sigma(x),s_0\}
\sqrt{\log\!\left(\frac{eM_\Phi M_\sigma}{\delta}\right)},
\]
where
\[
M_\sigma:=1+\lceil\log_2(R_\ast/s_0)\rceil.
\]
Combining the last display with the definition of \(Y_x\) proves \eqref{eq:pointwise-gaussian-rigorous}, since \(M_{r_0,s_0}=M_\Phi M_\sigma\). The absolute-value estimate \eqref{eq:absolute-pointwise-gaussian} follows by applying the same argument to \(-X\) and taking a union bound.
\end{proof}

\begin{proof}[Proof of Proposition~\ref{prop:global-sharpness}]
The upper bound follows from Lemma~\ref{lem:anchored-subset-mm} with \(H=T\), followed by optimization over \(\mu\). For the lower bound, choose \(x_\ast\) with \(\sigma(x_\ast)=R_\ast\). Since \(X_{x_0}=0\),
\[
\sup_{x\in T}X_x\ge (X_{x_\ast})_+,
\qquad
\mathbb E(X_{x_\ast})_+=\frac{R_\ast}{\sqrt{2\pi}}.
\]
It remains to compare the optimized local functional with the classical full-radius one.  Let
\[
\Phi_\mu^{\rm full}(x):=\int_0^{2R_\ast}\sqrt{\log\frac1{\mu(B_d(x,\varepsilon))}}\,d\varepsilon .
\]
The classical majorizing-measure theorem gives
\[
\inf_{\mu\in\Delta(T)}\sup_x \Phi_\mu^{\rm full}(x)
\lesssim \mathbb E\sup_{x\in T}X_x.
\]
Since \(\Phi_\mu(x)\le \Phi_\mu^{\rm full}(x)\) for every \(x\), we have
\[
\inf_\mu\Phi_{\mu,\ast}\le
\inf_\mu\sup_x\Phi_\mu^{\rm full}(x)
\lesssim \mathbb E\sup_{x\in T}X_x.
\]
Together with the radius lower bound this proves the left side of \eqref{eq:gamma2-optimality}. The absolute-value statement follows from
\[
\sup_x X_x\le \sup_x |X_x|\le \sup_x X_x+\sup_x(-X_x)
\]
and symmetry.
\end{proof}

\begin{proof}[Proof of Corollary~\ref{cor:pointwise-gaussian-expectation}]
Let
\[
Y:=
\sup_{x\in T}
\frac{
\bigl(|X_x|-C\max\{\Phi_\mu(x),r_0\}\bigr)_+
}{
\max\{\sigma(x),s_0\}
}.
\]
By the two-sided form of Theorem~\ref{thm:pointwise-gaussian}, after increasing \(C\) if necessary,
\[
\mathbb P\left(
Y>C\sqrt{\log\!\left(\frac{eM_{r_0,s_0}}{\delta}\right)}
\right)
\le \delta,
\qquad \delta\in(0,1).
\]
Integrating this tail gives
\[
\mathbb EY\le C\sqrt{\log(eM_{r_0,s_0})},
\]
which proves \eqref{eq:normalized-expectation-envelope}. The pointwise bound \eqref{eq:pointwise-expectation-envelope} follows by dropping the supremum, and the global bound \eqref{eq:global-expectation-clean} follows by taking \(r_0\ge\Phi_{\mu,\ast}\), \(s_0=R_\ast\), and then letting \(r_0\downarrow\Phi_{\mu,\ast}\). Optimizing over \(\mu\) and applying Proposition~\ref{prop:global-sharpness} proves \eqref{eq:optimized-expectation-optimal}.
\end{proof}

\section{Proof of Bayesian Algorithmic Lower and Upper Bounds}\label{sec:proof-bayesian-lower}
\begin{proof}[Proof of Proposition~\ref{prop:bayesian-algorithmic-lower}]
The result is the Gaussian-process and KL specialization of the interactive Fano method of \citet[Theorem~2]{chen-foster-han-qian-rakhlin-xu-2024}; we give a self-contained proof. Fix \(\widehat t\), \(Q\), and \(\Delta\), and let
\[
E:=\{(t,Y):d(t,\widehat t(Y))<\Delta\}.
\]
Under the true joint law \(P:=\int \pi(dt)P_t(dY)\), write \(p=P(E)\). Under the ghost law \(P_0:=\pi\otimes Q\), write
\[
q=P_0(E)=\rho_{\Delta,Q}(\widehat t).
\]
By data processing,
\[
\mathcal I_\pi(Q)=D_{\mathrm{KL}}(P\|P_0)
\ge
D_{\mathrm{KL}}(\operatorname{Bern}(p)\|\operatorname{Bern}(q)).
\]
The elementary inequality
\[
D_{\mathrm{KL}}(\operatorname{Bern}(p)\|\operatorname{Bern}(q))
\ge p\log\frac1q-\log2
\]
gives
\[
p\le\frac{\mathcal I_\pi(Q)+\log2}{\log(1/q)}.
\]
Therefore
\[
P\{d(t,\widehat t(Y))\ge\Delta\}=1-p
\ge
1-\frac{\mathcal I_\pi(Q)+\log2}{\log(1/\rho_{\Delta,Q}(\widehat t))},
\]
with the positive part inserted to make the bound nonnegative. Multiplying by \(\Delta\) and optimizing over \(\Delta,Q\) proves \eqref{eq:bayesian-algorithmic-lower-risk}.
\end{proof}

\begin{proof}[Proof of Theorem~\ref{thm:nn-comparison}]
For the Gaussian location experiment and the reference law \(Q_{\tau,m}\),
\[
\mathcal I_\pi(Q_{\tau,m})
=\int_T D_{\mathrm{KL}}\!\left(N(v_t,\tau^2I)\|N(m,\tau^2I)\right)\pi(dt)
=\frac{1}{2\tau^2}\int_T\|v_t-m\|^2\pi(dt)
=\mathcal I_{\pi,m}.
\]
Applying Proposition~\ref{prop:bayesian-algorithmic-lower} to \(\widehat\Theta\) and \(Q_{\tau,m}\) gives \eqref{eq:bayesian-nn-risk-main} under condition \eqref{eq:bayesian-nn-condition}.

It remains to relate this Bayes location error to a Gaussian range.  By the approximate comparison-decoder condition \eqref{eq:comparison-decoder-condition},
\[
\|Y-v_{\widehat\Theta}\|^2-\|Y-v_\Theta\|^2
\le
\Omega(\Theta)-\Omega(\widehat\Theta)+\varepsilon_{\rm opt}
\le \varepsilon_{\rm cmp}.
\]
Since \(Y=v_\Theta+\tau Z\), expanding and cancelling \(\tau^2\|Z\|^2\) gives
\[
\|v_\Theta-v_{\widehat\Theta}\|^2
+2\tau\langle Z,v_\Theta-v_{\widehat\Theta}\rangle
\le
\varepsilon_{\rm cmp},
\]
and hence
\[
\|v_\Theta-v_{\widehat\Theta}\|^2
\le
2\tau\langle Z,v_{\widehat\Theta}-v_\Theta\rangle
+\varepsilon_{\rm cmp}
=2\tau\bigl(X_{\widehat\Theta}-X_\Theta\bigr)+\varepsilon_{\rm cmp}.
\]
Taking expectations and using Jensen's inequality,
\[
c^2\Delta^2
\le
\left(\mathbb E d(\Theta,\widehat\Theta)\right)^2
\le
\mathbb E d(\Theta,\widehat\Theta)^2
\le
2\tau\,\mathbb E\bigl[X_{\widehat\Theta}-X_\Theta\bigr]+\mathbb E\varepsilon_{\rm cmp}.
\]
Rearranging proves \eqref{eq:nn-to-gaussian-range-main}.
\end{proof}

\begin{proof}[Proof of Corollary~\ref{coro:estimation}]
By the approximate residual comparison \eqref{eq:decoder-residual-comparison-for-upper}
and the identity \(Y=v_\Theta+\tau Z\),
\[
\|v_\Theta+\tau Z-v_{\widehat\Theta}\|^2
\le
\|\tau Z\|^2+\varepsilon_{\rm up}.
\]
Expanding the left-hand side and cancelling the common term
\(\tau^2\|Z\|^2\), we obtain the pathwise inequality
\[
d(\Theta,\widehat\Theta)^2
=
\|v_\Theta-v_{\widehat\Theta}\|^2
\le
2\tau\langle Z,v_{\widehat\Theta}-v_\Theta\rangle
+\varepsilon_{\rm up}
=
2\tau\bigl(X_{\widehat\Theta}-X_\Theta\bigr)+\varepsilon_{\rm up}.
\]
On the event in \eqref{eq:generic-gaussian-envelope-for-upper}, which holds
with probability at least \(1-\delta\) and is uniform over all \(t\in T\),
\[
X_{\widehat\Theta}-X_\Theta
\le
|X_{\widehat\Theta}|+|X_\Theta|
\le
A(\widehat\Theta)+A(\Theta).
\]
Hence, on the same event,
\[
d(\Theta,\widehat\Theta)^2
\le
2\tau\bigl(A(\Theta)+A(\widehat\Theta)\bigr)+\varepsilon_{\rm up},
\]
which proves \eqref{eq:algorithmic-upper-high-prob}.

For the expectation bound, let \(E\) denote the event
\(\{\forall t\in T:\ |X_t|\le A(t)\}\). On \(E\), the preceding display gives
\[
d(\Theta,\widehat\Theta)
\le
\sqrt{2\tau\bigl(A(\Theta)+A(\widehat\Theta)\bigr)+\varepsilon_{\rm up}}.
\]
On \(E^c\), we use the trivial bound
\[
d(\Theta,\widehat\Theta)\le D_T.
\]
Therefore
\[
\begin{aligned}
\mathbb E d(\Theta,\widehat\Theta)
&\le
\mathbb E\!\left[
\sqrt{2\tau\bigl(A(\Theta)+A(\widehat\Theta)\bigr)+\varepsilon_{\rm up}}\,\mathbf 1_E
\right]
+
D_T\,\mathbb P(E^c)\\
&\le
\mathbb E\sqrt{2\tau\bigl(A(\Theta)+A(\widehat\Theta)\bigr)+\varepsilon_{\rm up}}
+\delta D_T ,
\end{aligned}
\]
which proves \eqref{eq:algorithmic-upper-risk}. The final claim follows by
applying the absolute pointwise Gaussian envelope
\eqref{eq:absolute-pointwise-gaussian} to the anchored process
\((X_t)_{t\in T\cup\{t_0\}}\) and then restricting back to \(T\).
\end{proof}

\section{Proof of Separation Example and Pointwise Validation}
\label{sec:hub-cloud-algorithmic-ghost}

This section proves Theorem~\ref{thm:decision-aligned-separation}.  The example is a finite-dimensional weighted-basis construction.  It is intentionally elementary, but it captures the benchmark separation needed for the paper: the full ambient class is overparameterized, the correct subatlas is decision-dependent, and the actual estimator is unrestricted over the full class but uses a minimum-norm inductive bias.  The proof  keeps the discussion in separate paragraphs because each paragraph isolates one benchmark in the separation.

\begin{proof}[Proof of Theorem~\ref{thm:decision-aligned-separation}]
Let \(M,N\ge2\) satisfy \(\log N\ge1024\log M\).  Let
\[
 e_0,e_1,\ldots,e_M,f_1,\ldots,f_N
\]
be orthonormal vectors in \(\mathbb R^{M+N+1}\), and define
\[
\tau:=\frac{R}{\sqrt{\log M}},
\qquad
S:=\frac18\tau\sqrt{\log N}
=
\frac{R}{8}\sqrt{\frac{\log N}{\log M}} .
\]
The assumption on \(M,N\) implies \(S\ge4R\).  Set
\[
H_M:=\{Re_0,Re_1,\ldots,Re_M\},
\qquad
C_N:=\{Sf_1,\ldots,Sf_N\},
\qquad
T_{M,N}:=H_M\cup C_N,
\]
with Euclidean metric \(d(s,t)=\|s-t\|_2\).  The prior is
\[
\pi(Re_0)=\frac12,
\qquad
\pi(Re_i)=\frac1{2M},\quad i=1,\ldots,M,
\qquad
\pi(Sf_j)=0,
\quad j=1,\ldots,N,
\]
and we take \(\Delta=R/3\).  Since \(S\ge4R\), every \(\Delta\)-ball in \(T_{M,N}\) is a singleton.  Hence
\[
q_\pi(\Delta)=\sup_{a\in T_{M,N}}\pi(B_d(a,\Delta))=\frac12 .
\]
This proves the fixed-prior small-ball claim in part \(\mathrm{(i)}\).

\paragraph{Algorithmic ghost mass versus worst-case Fano mass.}
Let
\[
Y=\Theta+\tau Z,
\qquad
\Theta\sim\pi,
\qquad
Z\sim N(0,I_{M+N+1}),
\]
and use the ghost/reference law \(Q_\tau=N(0,\tau^2I_{M+N+1})\).  Under \(Q_\tau\), the scores of the \(M+1\) lower-norm points in \(H_M\) are exchangeable.  We first show that the cloud is selected with negligible ghost probability.  If a cloud point \(Sf_j\) beats the hub point \(Re_0\), then
\[
2\langle Y,Sf_j\rangle-(1+\lambda)S^2
\ge
2\langle Y,Re_0\rangle-(1+\lambda)R^2 .
\]
Under \(Q_\tau\), the random part of the difference is centered Gaussian with variance at most \(4\tau^2(S^2+R^2)\).  Since \(S\ge4R\) and \(\lambda=64\), a Gaussian union bound gives
\begin{align}\label{eq:selected}
Q_\tau(\widehat\Theta_\lambda\in C_N)\le N^{-2}
\end{align}
for all sufficiently large \(N\).  Conditional on \(\widehat\Theta_\lambda\in H_M\), exchangeability gives
\[
Q_\tau(\widehat\Theta_\lambda=Re_0\mid \widehat\Theta_\lambda\in H_M)=\frac1{M+1}.
\]
Consequently,
\[
Q_\tau(\widehat\Theta_\lambda=Re_0)
\le
\frac1{M+1}+N^{-2}.
\]
Since every \(\Delta\)-ball is a singleton and \(\pi\) vanishes on the cloud,
\[
\begin{aligned}
\rho_{\Delta,Q_\tau}(\widehat\Theta_\lambda)
&=
\mathbb E_{Y\sim Q_\tau}\pi(B_d(\widehat\Theta_\lambda(Y),\Delta))\\
&\le
\frac12 Q_\tau(\widehat\Theta_\lambda=Re_0)
+
\frac1{2M}Q_\tau(\widehat\Theta_\lambda\in\{Re_1,\ldots,Re_M\})
\le
\frac{2}{M}
\end{aligned}
\]
for all large \(M,N\).  Hence
\[
\log\frac1{\rho_{\Delta,Q_\tau}(\widehat\Theta_\lambda)}
\ge
\log M-O(1),
\]
whereas the worst-case relaxation for the same prior gives only \(\log2\).  This proves part \(\mathrm{(ii)}\) and the entropy separation in part \(\mathrm{(i)}\).

\paragraph{Sharp algorithmic lower bound for the actual unrestricted estimator.}
The information radius under \(Q_\tau\) is
\[
\mathsf I_{\pi,Q_\tau}
=
\mathbb E_{\Theta\sim\pi}
D_{\rm KL}\bigl(N(\Theta,\tau^2I)\,\|\,N(0,\tau^2I)\bigr)
=
\frac{\mathbb E_\pi\|\Theta\|^2}{2\tau^2}
=
\frac{R^2}{2\tau^2}
=
\frac12\log M .
\]
Combining this with the ghost-mass bound, for all large \(M\),
\[
\mathsf I_{\pi,Q_\tau}+\log2
\le
\frac34
\log\frac1{\rho_{\Delta,Q_\tau}(\widehat\Theta_\lambda)}.
\]
Proposition~\ref{prop:bayesian-algorithmic-lower} therefore yields
\begin{align}\label{eq:lower-Bayes}
\mathbb E_{\Theta,Y}d(\Theta,\widehat\Theta_\lambda(Y))
\ge
c\Delta
\asymp R .
\end{align}
This lower bound is sharp for the actual Bayesian decision problem.  When \(\Theta\in H_M\), all true parameters have norm \(R\).  The same score comparison as above, now under \(Y=\Theta+\tau Z\), gives
\[
\mathbb P_{\Theta,Y}(\widehat\Theta_\lambda\in C_N)\le N^{-2}.
\]
On the complementary event, both \(\Theta\) and \(\widehat\Theta_\lambda\) lie in \(H_M\), whose diameter is at most \(\sqrt2R\).  Hence
\begin{align}\label{eq:upper-Bayes}
\mathbb E_{\Theta,Y}d(\Theta,\widehat\Theta_\lambda(Y))
\le
\sqrt2R+N^{-2}S
\lesssim R .
\end{align}
Combining the algorithmic lower bound \eqref{eq:lower-Bayes} and the algorithmic upper bound \eqref{eq:upper-Bayes} proves the Bayes risk statement  for the estimator $\widehat\Theta_\lambda$:
\begin{align*}
E_{\Theta,Y}d(\Theta,\widehat\Theta_\lambda(Y))\asymp R.
\end{align*}
This proves the statement about the Bayes risk of the actual estimator  in part \(\mathrm{(iii)}\).
\paragraph{Bayes-optimal risk is of the same order.}
For the displayed prior, the Bayes-optimal risk is also \(\asymp R\). The upper bound follows from \eqref{eq:upper-Bayes}.  For the lower
bound, condition on the event \(\mathcal L\) that \(\Theta\) is one of the non-hub low-norm points.  Under this
conditional experiment, write
\[
  J\sim \mathrm{Unif}\{1,\ldots,M\},\qquad \Theta=Re_J,\qquad
  Y=Re_J+\tau Z,\quad Z\sim N(0,I_{M+N+1}).
\]
Given any estimator \(\widehat\theta(Y)\), define the induced low-norm index classifier
\[
  \widehat J(Y)\in \arg\min_{1\le j\le M}\|\widehat\theta(Y)-Re_j\|,
\]
with ties broken arbitrarily.  Since the non-hub low-norm points are separated by \(\sqrt2R\), the event
\(\|\widehat\theta(Y)-Re_J\|<R/2\) implies \(\widehat J=J\).  Hence
\[
  \mathbb E\!\left[\|\widehat\theta(Y)-\Theta\|\mid \mathcal L\right]
  \ge {\frac R 2}\,
      \mathbb P(\widehat J\neq J\mid \mathcal L).
\]
Let \(P_j=N(Re_j,\tau^2 I_{M+N+1})\), let \(\bar P=M^{-1}\sum_{j=1}^M P_j\), and take the reference
law \(Q=N(0,\tau^2 I_{M+N+1})\).  Then
\[
  I(J;Y)
  ={\frac 1  M}\sum_{j=1}^M D_{\mathrm{KL}}(P_j\|\bar P)
  \le {\frac 1 M}\sum_{j=1}^M D_{\mathrm{KL}}(P_j\|Q)
  ={\frac{R^2}{2\tau^2}}
  ={\frac 1 2}\log M,
\]
where the last equality uses \(\tau=R/\sqrt{\log M}\).  Fano's inequality therefore gives
\[
  \mathbb P(\widehat J\neq J\mid \mathcal L)
  \ge 1-{\frac{I(J;Y)+\log 2}{\log M}}
  \ge c_0
\]
for a universal constant \(c_0>0\), for all sufficiently large \(M\).  Since
\(\mathbb P(\mathcal L)=1/2\) under the original prior,
\[
  \mathbb E\|\widehat\theta(Y)-\Theta\|
  \ge {\frac 1 2}\,
      \mathbb E\!\left[\|\widehat\theta(Y)-\Theta\|\mid \mathcal L\right]
  \ge cR .
\]
Taking the infimum over \(\widehat\theta\) gives the Bayes-risk lower bound \(cR\). 
Thus the
class-wide Fano relaxation below is vacuous only as a validation; it does not reflect an easy
Bayes problem. 

By allowing the hub mass to be placed at different points of \(H_M=\{Re_0,Re_1,\ldots,Re_M\}\), rather than only at the original hub \(Re_0\), the same rotationally symmetric class construction can, for sufficiently large \(M\), accommodate arbitrarily large finite families of priors that exhibit the separation and have markedly different local mass profiles. Consequently, the Bayes-optimal oracle may vary substantially from prior to prior, while the unrestricted estimator remains unchanged. This completes the formal Bayes-optimal-risk statement in part \(\mathrm{(iii)}\).

\paragraph{Usual Fano relaxation is vacuous for the fixed prior.}
Replacing the exact ghost mass by the worst-case relaxation \(q_\pi(\Delta)=1/2\) gives
\[
\Delta
\left[
1-
\frac{\mathsf I_{\pi,Q_\tau}+\log2}{\log(1/q_\pi(\Delta))}
\right]_+
=
\Delta
\left[
1-
\frac{\mathsf I_{\pi,Q_\tau}+\log2}{\log2}
\right]_+
=0 .
\]
Thus, for the same prior, channel, and estimator, the usual fixed-prior Fano relaxation is vacuous, while the exact algorithmic ghost mass gives a sharp Bayes estimation lower bound.

\paragraph{Full-class minimax risk has arbitrarily large gap.}
Let
\[
\mathfrak R_{\rm mm}(T_{M,N},\tau)
:=
\inf_{\widehat\theta}
\sup_{\theta\in T_{M,N}}
\mathbb E_\theta d(\theta,\widehat\theta(Y)),
\]
where the infimum is over all measurable estimators with values in the ambient Euclidean space.  The trivial estimator \(\widehat\theta\equiv0\) gives \(\mathfrak R_{\rm mm}(T_{M,N},\tau)\le S\).  Conversely, restrict the parameter to the cloud \(C_N\) and put the uniform prior on \(C_N\).  For any estimator \(\widehat\theta\), define
\[
\widehat J(\widehat\theta)
\in
\arg\min_{1\le j\le N}\|\widehat\theta-Sf_j\|_2 .
\]
If \(d(Sf_J,\widehat\theta)<S/2\), then \(\widehat J(\widehat\theta)=J\), since the balls \(B(Sf_j,S/2)\) are disjoint.  Therefore
\[
\mathbb E_J d(Sf_J,\widehat\theta(Y))
\ge
\frac S2\,\mathbb P(\widehat J\ne J).
\]
By Fano's inequality with reference law \(N(0,\tau^2I_{M+N+1})\),
\[
I(J;Y)
\le
\frac{S^2}{2\tau^2}
=
\frac{1}{128}\log N .
\]
Thus, for all sufficiently large \(N\),
\[
\mathbb P(\widehat J\ne J)
\ge
1-
\frac{I(J;Y)+\log2}{\log N}
\ge
\frac12 .
\]
It follows that
\[
\mathfrak R_{\rm mm}(T_{M,N},\tau)
\ge
cS .
\]
Together with the trivial upper bound, this gives \(\mathfrak R_{\rm mm}(T_{M,N},\tau)\asymp S\), proving part \(\mathrm{(iv)}\).  Since \(S/R=(1/8)\sqrt{\log N/\log M}\), the full-class minimax risk is much larger than the Bayes risk whenever \(\log N/\log M\to\infty\).  The minimax scale is governed by an ambient cloud that is irrelevant to the specified Bayesian decision problem.

\paragraph{Algorithmically induced Gaussian validation matches the selected subatlas.}
Now define
\[
X_t:=\langle Z,t\rangle,
\qquad t\in T_{M,N},
\]
using the same standard Gaussian vector \(Z\) as in the location experiment.  We verify the approximate comparison condition in Theorem~\ref{thm:nn-comparison} with zero error.  The decoder minimizes \(\|Y-a\|^2+\lambda\|a\|^2\), so the first inequality in \eqref{eq:comparison-decoder-condition} holds with \(\Omega(t)=\lambda\|t\|^2\) and \(\varepsilon_{\rm opt}=0\).  Since \(\Theta\in H_M\) almost surely, \(\|\Theta\|=R\), whereas every output of \(\widehat\Theta_\lambda\) has norm either \(R\) or \(S\ge R\).  Hence \(\Omega(\widehat\Theta_\lambda)\ge\Omega(\Theta)\) almost surely, so \(\varepsilon_{\rm pen}=0\) and \(\varepsilon_{\rm cmp}=0\).  Applying Theorem~\ref{thm:nn-comparison} gives
\[
\mathcal G_{\rm alg}
:=
\mathbb E_{\Theta,Y}\bigl[X_{\widehat\Theta_\lambda(Y)}-X_\Theta\bigr]
\gtrsim
R\sqrt{\log M} .
\]
Conversely, on the event \(\widehat\Theta_\lambda\in H_M\), both \(\Theta\) and \(\widehat\Theta_\lambda\) lie in \(H_M\), and
\[
\mathbb E\sup_{s,u\in H_M}(X_s-X_u)\asymp R\sqrt{\log M}.
\]
The event \(\widehat\Theta_\lambda\in C_N\) has probability at most \(N^{-2}\) under the true prior.  By Cauchy--Schwarz and the standard Gaussian maximal bound on \(T_{M,N}\), its contribution is negligible.  Therefore
\[
\mathcal G_{\rm alg}
\asymp
\mathbb E\sup_{s,u\in H_M}(X_s-X_u)
\asymp
R\sqrt{\log M},
\]
which proves the first assertion in part \(\mathrm{(v)}\).

This also matches the pointwise Gaussian upper envelope on the selected subatlas.  By \eqref{eq:selected}, we already have
\[
  Q_\tau\{\widehat\Theta_\lambda(Y)\in C_N\}\le N^{-2}.
\]
for all sufficiently large $N$.
Let
\[
\nu_Q^H(a):=
Q_\tau\{\widehat\Theta_\lambda(Y)=a\mid \widehat\Theta_\lambda(Y)\in H_M\},
\qquad a\in H_M,
\]
be the ghost selection law conditioned on the lower-norm subatlas.  By exchangeability, \(\nu_Q^H\) is uniform on \(H_M\).  Since the restricted process on \(H_M\) consists of \(M+1\) Gaussian variables of variance at most \(R^2\), Theorem~\ref{thm:pointwise-gaussian} gives, up to universal constants and confidence logarithms,
\[
\mathbb P\!\left(\forall t\in H_M:\ |X_t|\le C R\sqrt{\log\frac{M}{\delta}}\right)
\ge 1-\delta .
\]
This proves the pointwise-envelope assertion in part \(\mathrm{(v)}\).

\paragraph{Global Gaussian supremum has arbitrarily large gap.}
The full ambient Gaussian field has a larger global scale because of the cloud.  Comparing the cloud points with the fixed hub \(Re_0\),
\[
\mathbb E\sup_{s,u\in T_{M,N}}(X_s-X_u)
\ge
\mathbb E\!\left[\max_{1\le j\le N}X_{Sf_j}-X_{Re_0}\right]
=
S\,\mathbb E\max_{1\le j\le N}Z_j
\gtrsim
S\sqrt{\log N}.
\]
Using the definition of \(S\),
\[
S\sqrt{\log N}
=
\frac18 R\frac{\log N}{\sqrt{\log M}} .
\]
This proves part \(\mathrm{(vi)}\).  Comparing with the decision-aligned scale \(R\sqrt{\log M}\), the gap is of order \(\log N/\log M\), and taking \(N=M^K\) makes the ratio of order \(K\).

\end{proof}

\paragraph{Summary and interpretation.}
The construction gives a fully explicit sequence of finite-dimensional Gaussian location problems for which
\begin{align*}
&\text{fixed-prior Fano relaxation}\\
\quad <\!\!< \quad
&\text{algorithmic ghost-mass lower envelope}\\
\quad \asymp \quad
&\text{pointwise upper envelope on the selected subatlas}\\
\quad <\!\!< \quad
&\text{full-class minimax/global Gaussian scale}.
\end{align*}
The example does not claim that minimax theory is wrong or fundamentally insufficient.  Rather, it shows that a full-class minimax benchmark can be the wrong validation for a pointwise decision problem, while a restricted minimax benchmark can be oracle-dependent.  The norm-regularized decoder uses a fixed numerical penalty \(\lambda\), independent of the prior $\pi$ and the problem parameters \(M,N,R,S,\tau\), while identifying the low-norm hub subatlas \(H_M\) as the relevant subatlas is precisely the decision-dependent information revealed by the ghost image \(Y\mapsto\widehat\Theta_\lambda(Y)\).  The algorithmic ghost-mass lower bound supplies the missing validation: it is tied to the fixed prior, the actual estimator, and the subgeometry selected by that estimator.  This is the finite-dimensional analogue of the pointwise-complexity viewpoint in deep networks, where the effective subatlas is induced by the learned representation and compressed spectrum rather than specified as a fixed restricted class in advance.

Here we keep the cleanest finite formulation.  Finite sets are already the standard setting for Sudakov, Fano, and packing lower bounds, and they make the quantifiers transparent: the full ambient class, the oracle restricted subatlas, the fixed prior, and the actual estimator are all explicit.  Extensions to continuous settings, as well as
computational-efficiency questions concerning the full class versus selected subatlases, are left to
future work.

\section{Proof of Penalty-Range Information-Relaxation Reformulation}\label{sec:minimax}

\begin{proof}[Proof of Theorem~\ref{thm:penalty-range-information-relaxation}]
Write the regularized score as
\[
G_\lambda(y,a)
:=
2\langle y,a\rangle-(1+\lambda)\|a\|_2^2 .
\]
The decoder \eqref{eq:lambda-family-penalty-revelation} is equivalently a maximizer of \(G_\lambda(Y,a)\) over \(a\in T_{M,N}\).  Throughout the proof we use
\[
\frac{S}{\tau}=\frac{\sqrt{\log N}}{8},
\qquad
\frac{R}{S}=8\sqrt{\frac{\log M}{\log N}}=o(1).
\]
Let \(r_{M,N}:=R/S\).  Then \(r_{M,N}\to0\).  The critical constant
\(\lambda_{\rm c}=16\sqrt2-1\) comes from the cloud maximum, because
\[
\frac{2\tau S\sqrt{2\log N}}{S^2}=16\sqrt2.
\]

\paragraph{Optimal penalty range suppresses the cloud.}
Fix \(\eta\in(0,\lambda_{\rm c})\), and take \(\lambda\ge\lambda_{\rm c}+\eta\).  First work under the ghost law \(Q_\tau=N(0,\tau^2I)\).  If a cloud point is selected, then for some \(j\le N\), the point \(Sf_j\) beats the hub point \(Re_0\).  Hence
\[
2\tau(SZ_j-RZ_0)
\ge
(1+\lambda)(S^2-R^2),
\]
where \(Z_j,Z_0\) are independent standard Gaussians.  The left-hand side is centered Gaussian with variance
\[
4\tau^2(S^2+R^2).
\]
Therefore, by the Gaussian tail bound and a union bound,
\begin{align*}
Q_\tau\{\widehat\Theta_\lambda(Y)\in C_N\}
&\le
N\exp\!\left(
-\frac{(1+\lambda)^2(S^2-R^2)^2}{8\tau^2(S^2+R^2)}
\right).
\end{align*}
Since \(1+\lambda\ge16\sqrt2+\eta\), \(S/\tau=\sqrt{\log N}/8\), and \(R/S=o(1)\), there is a number \(b_\eta>0\) such that, for all sufficiently large \(M,N\),
\[
\frac{(1+\lambda)(S^2-R^2)}{2\tau\sqrt{S^2+R^2}}
\ge
(\sqrt2+b_\eta)\sqrt{\log N}.
\]
The preceding display gives
\[
Q_\tau\{\widehat\Theta_\lambda(Y)\in C_N\}
\le
N\exp\{-(\sqrt2+b_\eta)^2\log N/2\}
\le
N^{-a_\eta}
\]
for some \(a_\eta>0\).

The same argument is uniform under every true parameter \(\theta\in H_M\).  Write \(\theta=Re_i\).  If \(Sf_j\) is selected, then it beats the true point \(Re_i\), so
\[
2\tau(SZ_j-RZ_i)
\ge
(1+\lambda)S^2+(1-
\lambda)R^2.
\]
The right-hand side equals \((1+\lambda)S^2-(\lambda-1)R^2\), and after division by \(2\tau\sqrt{S^2+R^2}\) it is again at least \((\sqrt2+b_\eta)\sqrt{\log N}\) for all sufficiently large \(M,N\), uniformly over \(\lambda\ge\lambda_{\rm c}+\eta\).  A union bound over the \(N\) cloud points gives
\[
\sup_{\theta\in H_M}P_\theta\{\widehat\Theta_\lambda(Y)\in C_N\}
\le N^{-a_\eta}.
\]
Taking the supremum over \(\lambda\in\Lambda_{\rm opt}(\eta)\) proves \eqref{eq:opt-penalty-cloud-suppression}.

We next derive the ghost-mass bound.  On the event \(\widehat\Theta_\lambda(Y)\in H_M\), the \(M+1\) hub scores are exchangeable under \(Q_\tau\), because each has the form
\[
2\tau R Z_k-(1+\lambda)R^2,
\qquad k=0,1,\ldots,M.
\]
The tie probability is zero.  Hence, conditionally on selecting a point of \(H_M\), the selected hub point is uniform on \(H_M\).  Since every \(R/3\)-ball is a singleton and \(\pi\) gives mass \(1/2\) to \(Re_0\), mass \(1/(2M)\) to each \(Re_i\), \(1\le i\le M\), and zero mass to \(C_N\),
\begin{align*}
\rho_{\Delta,Q_\tau}(\widehat\Theta_\lambda)
&=
\mathbb E_{Y\sim Q_\tau}\pi\bigl(B_d(\widehat\Theta_\lambda(Y),\Delta)\bigr)\\
&\le
\frac12\left(\frac{1}{M+1}+N^{-a_\eta}\right)
+
\frac{1}{2M}
+N^{-a_\eta}.
\end{align*}
Because \(\log N/\log M\to\infty\), we have \(N^{-a_\eta}=o(M^{-1})\).  Thus \(\rho_{\Delta,Q_\tau}(\widehat\Theta_\lambda)\le C/M\), proving \eqref{eq:opt-penalty-ghost-mass}.

\paragraph{Under-regularization selects the cloud.}
Now fix \(\lambda\le\lambda_{\rm c}-\eta\) and \(\theta\in H_M\).  Write \(Y=\theta+\tau Z\).  Let
\[
\mathcal E_C:=\left\{\max_{1\le j\le N} Z_j^C\ge(\sqrt2-\delta)\sqrt{\log N}\right\},
\qquad
\mathcal E_H:=\left\{\max_{0\le k\le M} Z_k^H\le2\sqrt{\log M}\right\},
\]
where \(Z_j^C=\langle Z,f_j\rangle\), \(Z_k^H=\langle Z,e_k\rangle\), and \(\delta>0\) will be chosen below as a function of \(\eta\).  The standard Gaussian maximum bounds give
\[
P(\mathcal E_C)\to1,
\qquad
P(\mathcal E_H)\to1.
\]
Indeed, the first follows from
\[
P(\mathcal E_C^c)
=\Phi((\sqrt2-\delta)\sqrt{\log N})^N
\le
\exp\{-c_\delta N^{\sqrt2\delta-
\delta^2/2}/\sqrt{\log N}\}\to0,
\]
while the second follows from
\[
P(\mathcal E_H^c)
\le
(M+1)\exp(-2\log M)\to0.
\]
On \(\mathcal E_C\), the best cloud score is at least
\begin{align*}
\max_{1\le j\le N}G_\lambda(Y,Sf_j)
&=
2\tau S\max_{1\le j\le N}Z_j^C-(1+\lambda)S^2 \\
&\ge
\bigl(16(\sqrt2-\delta)-(1+\lambda)\bigr)S^2
\ge
(\eta-16\delta)S^2 .
\end{align*}
Choose \(\delta=\eta/32\).  Then the last lower bound is \((\eta/2)S^2\).  On \(\mathcal E_H\), every hub point \(Re_k\) has score at most
\begin{align*}
G_\lambda(Y,Re_k)
&=2\langle\theta,Re_k\rangle+2\tau R Z_k^H-(1+\lambda)R^2\\
&\le
2R^2+4\tau R\sqrt{\log M}\\
&=
6R^2
=
6r_{M,N}^2S^2.
\end{align*}
Since \(r_{M,N}\to0\), this is smaller than \((\eta/2)S^2\) for all sufficiently large \(M,N\).  Therefore, on \(\mathcal E_C\cap\mathcal E_H\), a cloud point strictly beats every hub point.  Thus
\[
\inf_{\lambda\in\Lambda_{\rm under}(\eta)}
\inf_{\theta\in H_M}
P_\theta\{\widehat\Theta_\lambda(Y)\in C_N\}
\ge
P(\mathcal E_C\cap\mathcal E_H)\to1,
\]
which proves \eqref{eq:under-penalty-cloud-selection}.

\paragraph{Penalty-revealed algorithmic minimax risks.}
We first prove the lower bound \(\mathfrak R_{\rm RNN}^{H}(\Lambda_{\rm opt}(\eta))\gtrsim R\).  This lower bound holds even if one allows all measurable estimators, so it also holds for the smaller class of regularized nearest-neighbor rules.  Put the uniform prior on \(\{Re_1,\ldots,Re_M\}\), write \(\Theta=Re_J\), and let \(J\sim{\rm Unif}\{1,
\ldots,M\}\).  For an arbitrary estimator \(\widehat\theta(Y)\), define
\[
\widehat J(Y)\in\arg\min_{1\le j\le M}\|\widehat\theta(Y)-Re_j\|_2.
\]
If \(\|\widehat\theta(Y)-Re_J\|_2<R/2\), then \(\widehat J=J\).  With \(Q=N(0,\tau^2I)\),
\[
I(J;Y)
\le
\frac1M\sum_{j=1}^M
D_{\rm KL}\bigl(N(Re_j,\tau^2I)\|Q\bigr)
=
\frac{R^2}{2\tau^2}
=
\frac12\log M.
\]
Fano's inequality gives \(P(\widehat J\ne J)\ge c_0\) for a universal \(c_0>0\), for all sufficiently large \(M\).  Hence
\[
\sup_{\theta\in H_M}\mathbb E_\theta d(\theta,\widehat\theta(Y))
\ge
\mathbb E d(\Theta,\widehat\theta(Y))
\ge
\frac R2 P(\widehat J\ne J)
\ge cR.
\]
Taking the infimum over the smaller class \(\{\widehat\Theta_\lambda:\lambda\in\Lambda_{\rm opt}(\eta)\}\) gives the desired lower bound.

For the matching upper bound, take any \(\lambda\in\Lambda_{\rm opt}(\eta)\).  By \eqref{eq:opt-penalty-cloud-suppression}, uniformly over \(\theta\in H_M\), the event \(\widehat\Theta_\lambda(Y)\in C_N\) has probability at most \(N^{-a_\eta}\).  On its complement, both \(\theta\) and \(\widehat\Theta_\lambda(Y)\) lie in \(H_M\), whose diameter is at most \(\sqrt2R\).  On the exceptional event, the loss is at most \(S+R\le2S\).  Therefore
\[
\sup_{\theta\in H_M}\mathbb E_\theta d(\theta,\widehat\Theta_\lambda(Y))
\le
\sqrt2R+2S N^{-a_\eta}
\le CR,
\]
because \(S/R=(1/8)\sqrt{\log N/\log M}=N^{o(1)}\), whereas \(N^{-a_\eta}\) is polynomially small in \(N\).  This proves \(\mathfrak R_{\rm RNN}^{H}(\Lambda_{\rm opt}(\eta))\asymp R\).

For \(\lambda\in\Lambda_{\rm under}(\eta)\), \eqref{eq:under-penalty-cloud-selection} implies, uniformly over \(\theta\in H_M\), that the decoder selects a cloud point with probability tending to one.  Whenever \(\theta\in H_M\) and \(a\in C_N\),
\[
d(\theta,a)=\sqrt{R^2+S^2}\ge S.
\]
Thus
\[
\inf_{\lambda\in\Lambda_{\rm under}(\eta)}
\sup_{\theta\in H_M}
\mathbb E_\theta d(\theta,\widehat\Theta_\lambda(Y))
\ge
(1-o(1))S.
\]
The reverse inequality follows from the deterministic bound \(d(\theta,a)\le S+R\le2S\) for \(\theta\in H_M\), \(a\in T_{M,N}\).  This proves the under-regularized part of \eqref{eq:penalty-minimax-gap}.

\paragraph{Full-class minimax risk.}
The full minimax calculation is unchanged from Theorem~\ref{thm:decision-aligned-separation}.  The constant estimator \(0\) gives the upper bound \(\mathfrak R_{\rm mm}^{\rm full}\le S\).  Conversely, put the uniform prior on \(C_N\), write \(\Theta=Sf_J\), and let \(J\sim{\rm Unif}\{1,
\ldots,N\}\).  If an estimator has loss less than \(S/2\), then nearest-neighbor decoding among \(\{Sf_1,\ldots,Sf_N\}\) recovers \(J\).  Moreover,
\[
I(J;Y)
\le
\frac1N\sum_{j=1}^N
D_{\rm KL}\bigl(N(Sf_j,\tau^2I)\|N(0,\tau^2I)\bigr)
=
\frac{S^2}{2\tau^2}
=
\frac1{128}\log N.
\]
Fano's inequality gives a classification-error probability bounded below by a positive universal constant.  Therefore the minimax risk over \(T_{M,N}\) is at least \(cS\), proving \eqref{eq:penalty-full-minimax}.

\paragraph{Gaussian ranges and comparison lower bound.}
On \(H_M\), the field consists of \(M+1\) independent centered Gaussians with variance \(R^2\).  Hence
\[
\Gamma_H
=
\mathbb E\sup_{s,u\in H_M}(X_s-X_u)
\asymp R\sqrt{\log M}.
\]
For the full class,
\[
\mathbb E\sup_{t\in T_{M,N}}|X_t|
\le
C\bigl(R\sqrt{\log M}+S\sqrt{\log N}\bigr)
\le
C'S\sqrt{\log N},
\]
and the matching lower bound follows from the cloud alone:
\[
\mathbb E\sup_{s,u\in T_{M,N}}(X_s-X_u)
\ge
\mathbb E\max_{1\le j\le N}X_{Sf_j}-\mathbb E X_{Re_0}
\gtrsim
S\sqrt{\log N}.
\]
Thus \(\Gamma_{\rm full}\asymp S\sqrt{\log N}\).

It remains to connect the penalty-revealed range to Theorem~\ref{thm:nn-comparison}.  Fix \(\lambda\in\Lambda_{\rm opt}(\eta)\).  The decoder exactly minimizes \(\|Y-a\|^2+\lambda\|a\|^2\), so the first inequality in \eqref{eq:comparison-decoder-condition} holds with \(\Omega(t)=\lambda\|t\|^2\) and \(\varepsilon_{\rm opt}=0\).  Under the prior \(\pi\), \(\Theta\in H_M\) almost surely, hence \(\|\Theta\|=R\).  Every possible output has norm either \(R\) or \(S\ge R\), so \(\Omega(\widehat\Theta_\lambda)\ge\Omega(\Theta)\), and the second inequality in \eqref{eq:comparison-decoder-condition} holds with \(\varepsilon_{\rm pen}=0\).  Therefore \(\varepsilon_{\rm cmp}=0\).

With the centered ghost law \(Q_\tau=N(0,\tau^2I)\), the information radius is
\[
\mathcal I_{\pi,0}
=
\frac{\mathbb E_\pi\|\Theta\|^2}{2\tau^2}
=
\frac{R^2}{2\tau^2}
=
\frac12\log M.
\]
By \eqref{eq:opt-penalty-ghost-mass},
\[
\log\frac1{\rho_{\Delta,Q_\tau}(\widehat\Theta_\lambda)}
\ge
\log M-O(1).
\]
Thus, for all sufficiently large \(M\), the information condition \eqref{eq:bayesian-nn-condition} holds, for example with \(c=1/4\).  Since \(\Delta=R/3\), Theorem~\ref{thm:nn-comparison} gives
\[
\mathcal G_{\rm alg}(\lambda)
\ge
c'\frac{\Delta^2}{\tau}
\asymp
R\sqrt{\log M}.
\]
For the reverse inequality, let \(A_\lambda=\{\widehat\Theta_\lambda(\Theta+\tau Z)\in C_N\}\).  On \(A_\lambda^c\), both \(\Theta\) and \(\widehat\Theta_\lambda\) lie in \(H_M\), so
\[
X_{\widehat\Theta_\lambda}-X_\Theta
\le
\sup_{s,u\in H_M}(X_s-X_u).
\]
On \(A_\lambda\), Cauchy--Schwarz and the standard second-moment bound for Gaussian maxima give
\[
\mathbb E\left[\sup_{s,u\in T_{M,N}}|X_s-X_u|\,\mathbf 1_{A_\lambda}\right]
\le
P(A_\lambda)^{1/2}
\left(\mathbb E\sup_{s,u\in T_{M,N}}|X_s-X_u|^2\right)^{1/2}
\le
C N^{-a_\eta/2}S\sqrt{\log N}
=o(R\sqrt{\log M}).
\]
Combining this with \(\Gamma_H\asymp R\sqrt{\log M}\) proves
\(\mathcal G_{\rm alg}(\lambda)\asymp R\sqrt{\log M}\), uniformly over \(\lambda\in\Lambda_{\rm opt}(\eta)\).  The displayed ratios follow by substituting \(S/R=(1/8)\sqrt{\log N/\log M}\), and the special case \(N=M^K\) gives gaps \(\sqrt K\) and \(K\).  This completes the proof.
\end{proof}

\section{Conclusion}\label{sec:summary}

The paper develops four linked pointwise and algorithmic messages, and shows how they yield separations from global complexity scales. First, for Gaussian processes, the natural refinement of generic chaining is a simultaneous pointwise envelope for the field, not only a scalar estimate on \(\mathbb E\sup_x X_x\). This envelope is sharp in expectation after optimizing over the prior and recovers the anchored Fernique--Talagrand scale. It should be useful whenever one wants to retain local field information before taking a final supremum.

Second, the Bayesian algorithmic lower envelope gives a single-radius lower-bound counterpart to pointwise complexity.  It is an information-theoretic Bayes-risk bound for every estimator \(\widehat t(Y)\), expressed through the exact ghost small-ball mass \(\mathbb E_Q\pi(B_d(\widehat t(Y),\Delta))\). The comparison-decoder theorem turns this Bayes distance obstruction into a lower bound on a decision-aligned Gaussian range; conversely, a simultaneous pointwise Gaussian envelope yields an algorithmic upper bound on the same decoding risk. Importantly, we  construct a weighted-basis ``hub--cloud'' example for norm-regularized nearest-neighbor decoder where the fixed-prior Fano relaxation is vacuous, the algorithmic ghost-mass lower envelope and pointwise upper envelope match on the estimator-selected subatlas, while the full-class minimax risk and global Gaussian scale are much larger. The key is to retain the algorithmic mass: it provides the local validation of pointwise complexity for fixed estimators in overparameterized ambient classes, precisely in regimes where classical minimax theory becomes either too coarse or oracle-dependent. 

Third, we reformulate the same separating example in minimax language as a penalty-range information-relaxation reformulation, which treats robust tuning information as a statistical resource for a fixed algorithmic family.  This points to a broader algorithmic information-gap viewpoint, connecting the offline separation here to the adaptivity gap of \citet{maiti-xu-jamieson-2026} and to classical high-dimensional regularization phenomena such as the Lasso.

Fourth, finite-cutoff renormalization and graph local time illustrate the same structural mechanism in settings motivated by physics and probability. The structural analogies between RG and DNN, and between pointwise dimension and graph local time, show that trajectory-aligned or selected-subatlas complexity can be intrinsically smaller than the scale obtained by passing through a full-cover event.

Taken together, the results suggest a unified principle: global worst-case objects capture genuine hardness of the full class, while pointwise and algorithmic objects can validate tractability on the local geometry selected by a field, an estimator, or a renormalization trajectory. Algorithm-suggested information relaxation provides a minimax language for reformulating the separation results and is relevant to algorithmic-robustness questions in traditional high-dimensional models. In this sense, the paper connects field-level generic chaining with decision- and trajectory-aligned validation, and gives explicit separations between pointwise and global complexity scales through reusable tools and transparent formulations.

\newpage
\appendix

\section{Finite-Cutoff Renormalization  Application}\label{sec:finite-rg}

\subsection{Structural analogy between finite-cutoff RG and feature-learning DNN}\label{subsec:main-rg-theorem}

This subsection makes precise the analogy between finite-cutoff renormalization and feature-learning DNN in \cite{li-xu-pointwise-generalization-dnn}, while also recording where the analogy stops. The analogy does not rest merely on the correspondence between composite layers and RG levels. More essentially, both settings exhibit a common \emph{feature/operator inner-product} structure within each composite layer of a DNN, or within each exponential-family RG level. This correspondence leads to the same proof mechanism: exact finite-step telescoping, a pointwise ellipsoidal metric induced by a learned or renormalized secant Gram, and a hierarchical atlas that separates local effective dimension from the global price of making the prior independent of the realized trajectory.

 The distinction is also important. A DNN layer has a rectangular matrix parameter and a common input-feature {\it matrix}, so its pointwise metric has a Kronecker repetition.  A Wilsonian RG step is the operation of integrating out short-distance or fluctuation variables and rewriting the resulting coarse marginal as an effective action.  After a finite operator truncation, this effective action is usually represented by a {\it vector} of couplings \(g_j=(g_{j,1},\ldots,g_{j,p_j})\), not by a learned matrix weight. We clarify this distinction after \eqref{eq:secant-Gram} and again after Theorem~\ref{thm:rg-pointwise-dimension}.

\begin{center}
\begin{tabular}{@{}ll@{}}
\toprule
Deep-network pointwise theory & Finite-cutoff RG analogue \\
\midrule
Layer index \(\ell\) & RG level index \(j\) \\
Weights matrix \(W_\ell\) & Level coupling vector \(g_j\) \\
Feature matrix \(F_{\ell-1}(W,X)\) & Renormalized operator family \(O_j(S_j)\) \\
Feature Gram \(F_{\ell-1}F_{\ell-1}^{\top}\) & Secant response Gram \(\Gamma_j^{\rm sec}\) \\
Exact layer telescoping & Exact replacement over RG levels \\
Outer Lipschitz \(M_{\ell\to L}\) & Stability of later RG maps \(M_{j\to J}\) \\
Active feature eigenspace & Relevant/marginal active operator subspace \\
Grassmannian prior & Phase/subspace atlas prior \\
Feature-rank compression & Contraction or truncation of irrelevant directions \\
Kernel/NTK baseline & Free or massive GFF baseline \\
\bottomrule
\end{tabular}
\end{center}

\paragraph{Terminology for radii, resolutions, and RG levels.}
The word ``scale'' appears in several neighboring literatures, so we keep the terminology separate.  The Bayesian lower envelope is a \emph{single-radius} statement at a testing radius \(\Delta\), whereas generic chaining is multiresolution and integrates over radii.  The Gaussian upper theorem also uses neighborhood radii, such as \(\varepsilon\), \(r_0\), and \(s_0\), only as metric resolutions.  In the renormalization appendix we use \emph{RG level} for the index of a block-spin or fluctuation integration, and \emph{finite-cutoff} or \emph{finite-step} for the non-infinitesimal nature of the telescoping argument.

\paragraph{Scope of the finite-cutoff appendix.}
The RG statements below are finite-dimensional, finite-cutoff complexity statements. They are compatible with rigorous Wilsonian RG frameworks \citep{brydges-slade-rg-step-2015,bauerschmidt-brydges-slade-2019}, but they do not attempt to prove continuum construction, scale-uniform Gaussianization, reflection positivity, or a Yang--Mills mass gap. Model-specific results, such as random-current estimates for four-dimensional Ising/\(\Phi^4_4\) \citep{aizenman-duminil-copin-2021} or gauge-field estimates for Yang--Mills \citep{jaffe-witten-yangmills,chatterjee-yangmills-probabilists}, remain separate inputs. The purpose here is only to show how pointwise local-chart/global-atlas complexity can be layered on top of a finite RG map when the needed stability, curvature, and truncation estimates are available.

\subsection{Pointwise complexity of composite renormalization maps}
\paragraph{Exact finite-step telescoping for composite renormalization maps.}

Let \(\mathcal S_0,\ldots,\mathcal S_J\) be finite-dimensional normed spaces.  For each RG level \(j\), let
\[
\mathcal R_j^{g_j}:\mathcal S_j\to\mathcal S_{j+1},
\qquad g_j\in B_2(R_j)\subset\mathbb R^{p_j},
\]
be a nonlinear coarse-graining map.  Starting from a fixed \(S_0\in\mathcal S_0\), define the RG trajectory
\[
S_{j+1}(g):=\mathcal R_j^{g_j}(S_j(g)),
\qquad
S_J(g)=\mathcal R_{J-1}^{g_{J-1}}\circ\cdots\circ\mathcal R_0^{g_0}(S_0).
\]
This notation covers, for example, the exact Wilsonian identity
\[
 e^{-S_{j+1}(\Phi)}=
 \int e^{-S_j(\Phi+\psi_j)}\,d\gamma_j(\psi_j).
\]
This is the finite-dimensional analogue of integrating out one fluctuation field at one RG level.  Perturbative RG expands this map; here we keep the finite map itself.

This is the sense in which the appendix uses Wilsonian RG \citep{brydges-slade-rg-step-2015,bauerschmidt-brydges-slade-2019}. Wilsonian RG refers to the coarse-graining transformation on actions or measures obtained by integrating out fluctuations. An exponential-family RG coordinate system gives a finite-dimensional projection or truncation of this transformation: after choosing operators $O_{j,a}$, one writes the retained effective action in natural parameters $g_{j,a}$. This  relation induces inner products analogous to those generated by feature maps in DNN. In the RG setting, however, the natural finite-dimensional coordinates are coupling vectors, not matrix-valued weights with shared-feature structure as in DNN.

For two coupling sequences \(g,g'\), define the hybrid trajectory
\[
S_j^{(g',g;m)}
:=
\mathcal R_{j-1}^{g'_{j-1}}\circ\cdots\circ\mathcal R_m^{g'_m}
\circ\mathcal R_{m-1}^{g_{m-1}}\circ\cdots\circ\mathcal R_0^{g_0}(S_0),
\]
with the evident convention at the endpoints.  Then the following identity is exact.

\begin{lemma}[Exact RG replacement telescoping]\label{lem:rg-telescoping}
For every \(g,g'\),
\begin{equation}\label{eq:rg-telescope}
S_J(g')-S_J(g)=
\sum_{j=0}^{J-1}
\Bigl[
\mathcal R_{J-1:j+1}^{g'}\bigl(\mathcal R_j^{g'_j}(S_j^{(j)})\bigr)
-
\mathcal R_{J-1:j+1}^{g'}\bigl(\mathcal R_j^{g_j}(S_j^{(j)})\bigr)
\Bigr],
\end{equation}
where \(S_j^{(j)}:=\mathcal R_{j-1}^{g_{j-1}}\circ\cdots\circ\mathcal R_0^{g_0}(S_0)\) and \(\mathcal R_{J-1:j+1}^{g'}:=\mathcal R_{J-1}^{g'_{J-1}}\circ\cdots\circ\mathcal R_{j+1}^{g'_{j+1}}\).
\end{lemma}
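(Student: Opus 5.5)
The plan is the standard hybrid-trajectory telescoping argument; the identity is purely algebraic and needs no regularity of the maps. For \(m=0,1,\ldots,J\) we consider the hybrid endpoints
\[
H_m:=S_J^{(g',g;m)}=\mathcal R_{J-1:m}^{g'}\bigl(S_m^{(m)}\bigr),
\]
which run the original couplings \(g\) for the first \(m\) levels and the new couplings \(g'\) for the remaining \(J-m\) levels. Here we use the conventions \(\mathcal R_{J-1:J}^{g'}=\mathrm{id}\) (empty composition) and \(S_0^{(0)}=S_0\). The endpoints are then \(H_0=S_J(g')\), since all levels use \(g'\), and \(H_J=S_J(g)\), since all levels use \(g\).

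First, we write the telescoping sum
\[
S_J(g')-S_J(g)=H_0-H_J=\sum_{j=0}^{J-1}(H_j-H_{j+1}).
\]
This is valid in the normed (vector) space \(\mathcal S_J\), which is the only structure needed.

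Second, we identify each summand. Since \(\mathcal R_{J-1:j}^{g'}=\mathcal R_{J-1:j+1}^{g'}\circ\mathcal R_j^{g'_j}\), we have \(H_j=\mathcal R_{J-1:j+1}^{g'}\bigl(\mathcal R_j^{g'_j}(S_j^{(j)})\bigr)\). Similarly, \(S_{j+1}^{(j+1)}=\mathcal R_j^{g_j}(S_j^{(j)})\), so \(H_{j+1}=\mathcal R_{J-1:j+1}^{g'}\bigl(\mathcal R_j^{g_j}(S_j^{(j)})\bigr)\). Substituting both expressions into the sum gives exactly \eqref{eq:rg-telescope}.

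There is no genuine obstacle. The only care needed is bookkeeping of the endpoint conventions: the case \(j=J-1\), where the outer composition is the identity, and the case \(j=0\), where the inner trajectory is just \(S_0\). These should be stated explicitly so that the formula is unambiguous at the boundaries. It is also worth remarking that no linearity or Lipschitz property of the \(\mathcal R_j\) is used; those enter only later, when each summand is bounded through the stability constants \(M_{j\to J}\).
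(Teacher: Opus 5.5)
Your proposal is correct and matches the paper's argument: insert the hybrid endpoints that use \(g\) on levels \(0,\dots,m-1\) and \(g'\) afterward, telescope them, and note that consecutive hybrids differ only in the level-\(j\) map. Stating the endpoint conventions explicitly (the empty outer composition at \(j=J-1\), and \(S_0^{(0)}=S_0\)) is a useful addition, since the paper leaves them implicit.
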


\begin{proof}
Insert the hybrid trajectories that use \(g\) up to level \(j-1\) and \(g'\) from level \(j\) onward.  The difference between two consecutive hybrids changes only the scale-\(j\) map.  Summing these consecutive differences gives \eqref{eq:rg-telescope}.
\end{proof}

Let \(\rho\) be a pseudometric (in particular, satisfying the triangle inequality) on \(\mathcal S_J\).  The next result is the RG analogue of the non-perturbative DNN feature expansion in \citet[Lemma 1]{li-xu-pointwise-generalization-dnn}: the finite replacement identity induces an ellipsoidal metric whose matrices are finite-scale secant response Grams.

\begin{theorem}[Secant-Gram metric domination for nonlinear RG]\label{thm:rg-secant-domination}
Fix \(g\) and a finite neighborhood \(\mathcal N(g,\varepsilon)\).  Assume that for each \(j\): 
\begin{enumerate}[label=(\roman*)]
\item the later RG flow is locally stable: for every \(g'\in\mathcal N(g,\varepsilon)\) and all states \(U,V\) arising along the corresponding hybrid trajectories,
\[
\rho\bigl(\mathcal R_{J-1:j+1}^{g'}(U),\mathcal R_{J-1:j+1}^{g'}(V)\bigr)
\le M_{j\to J}(g,\varepsilon)\|U-V\|_{j+1};
\]
\item the scale-\(j\) finite replacement is dominated by a positive semidefinite secant Gram \(\Gamma_j(g,\varepsilon)\):
\[
\bigl\|\mathcal R_j^{g'_j}(S_j^{(j)})-
\mathcal R_j^{g_j}(S_j^{(j)})\bigr\|_{j+1}^2
\le
(g'_j-g_j)^\top\Gamma_j(g,\varepsilon)(g'_j-g_j).
\]
\end{enumerate}
Then all \(g'\in\mathcal N(g,\varepsilon)\) satisfy
\begin{equation}\label{eq:rg-metric-domination}
\rho(S_J(g'),S_J(g))^2
\le
\sum_{j=0}^{J-1}
(g'_j-g_j)^\top G_j(g,\varepsilon)(g'_j-g_j),
\qquad
G_j:=J\,M_{j\to J}(g,\varepsilon)^2\Gamma_j(g,\varepsilon).
\end{equation}
\end{theorem}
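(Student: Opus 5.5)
The plan is to combine the exact telescoping identity of Lemma~\ref{lem:rg-telescoping} with the triangle inequality for \(\rho\), then apply the two hypotheses term by term, and finally close with Cauchy--Schwarz over the \(J\) levels. This is the same mechanism as the DNN feature expansion: telescoping, stability of the later layers, a secant bound at the replaced layer, and squaring.

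\textbf{Step 1: telescoping and triangle inequality.} By Lemma~\ref{lem:rg-telescoping}, \(S_J(g')-S_J(g)\) is a sum of \(J\) consecutive hybrid differences. Since \(\rho\) is a pseudometric on \(\mathcal S_J\), it is more natural to use the chain of hybrid states directly. Set
\[
H_j:=\mathcal R_{J-1:j}^{g'}\bigl(S_j^{(j)}\bigr),
\]
which uses \(g\) below level \(j\) and \(g'\) from level \(j\) on. Then \(H_0=S_J(g')\) and \(H_J=S_J(g)\). Consecutive hybrids are
\[
H_j=\mathcal R_{J-1:j+1}^{g'}\bigl(\mathcal R_j^{g'_j}(S_j^{(j)})\bigr),
\qquad
H_{j+1}=\mathcal R_{J-1:j+1}^{g'}\bigl(\mathcal R_j^{g_j}(S_j^{(j)})\bigr),
\]
which are exactly the two terms in the \(j\)-th summand of \eqref{eq:rg-telescope}. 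The triangle inequality then gives
\[
\rho\bigl(S_J(g'),S_J(g)\bigr)\le\sum_{j=0}^{J-1}\rho(H_j,H_{j+1}).
\]
This route avoids needing \(\rho\) to be induced by a norm.

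\textbf{Step 2: bound each term.} Take \(U=\mathcal R_j^{g'_j}(S_j^{(j)})\) and \(V=\mathcal R_j^{g_j}(S_j^{(j)})\). Both are states arising along hybrid trajectories for \(g'\in\mathcal N(g,\varepsilon)\). Hypothesis (i) then gives
\[
\rho(H_j,H_{j+1})\le M_{j\to J}\,\|U-V\|_{j+1}.
\]
Hypothesis (ii) bounds \(\|U-V\|_{j+1}\) by \(\sqrt{(g'_j-g_j)^\top\Gamma_j(g'_j-g_j)}\). For the top level \(j=J-1\), the later map is the identity, so \(M_{J-1\to J}\) should be read as the comparison constant between \(\|\cdot\|_J\) and \(\rho\). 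I would note this endpoint convention explicitly.

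\textbf{Step 3: square and sum.} Squaring the sum from Step 1 and applying Cauchy--Schwarz in the form \((\sum_{j=1}^J a_j)^2\le J\sum_{j=1}^J a_j^2\) yields
\[
\rho\bigl(S_J(g'),S_J(g)\bigr)^2\le\sum_{j}J\,M_{j\to J}^2\,(g'_j-g_j)^\top\Gamma_j(g'_j-g_j).
\]
This is \eqref{eq:rg-metric-domination} with \(G_j=J\,M_{j\to J}^2\Gamma_j\).

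\textbf{Main obstacle.} The only delicate point is bookkeeping: checking that the states \(U,V\) fed into hypothesis (i) are indeed of the form "arising along hybrid trajectories", and that the hybrid endpoints match the conventions of Lemma~\ref{lem:rg-telescoping}. Everything else is the triangle inequality and Cauchy--Schwarz.
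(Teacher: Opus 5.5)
Your proposal is correct and follows essentially the same route as the paper: the hybrid-trajectory decomposition of Lemma~\ref{lem:rg-telescoping}, the triangle inequality for \(\rho\), hypotheses (i) and (ii) applied term by term, and Cauchy--Schwarz to produce the factor \(J\) absorbed into \(G_j\). Your explicit chain \(H_0=S_J(g'),\ldots,H_J=S_J(g)\) usefully spells out the paper's one-line argument: since \(\rho\) is only a pseudometric and need not be translation-invariant, the triangle inequality is applied to consecutive hybrid states rather than to the vector differences in \eqref{eq:rg-telescope}.
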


\begin{proof}
Apply Lemma~\ref{lem:rg-telescoping}, the triangle inequality for \(\rho\), the local stability assumption, and the one-step secant-Gram bound. Cauchy--Schwarz gives a factor \(J\) when the \(J\) increments are summed; this factor is included in the definition of \(G_j\).
\end{proof}

\paragraph{How the secant Gram is verified in exponential-family RG steps.}
Condition (ii) in Theorem~\ref{thm:rg-secant-domination} is a finite-dimensional secant calculation.  Fix the state \(S_j^{(j)}\) and set
\[
F_j(g_j):=\mathcal R_j^{g_j}(S_j^{(j)}).
\]
If \(F_j\) is continuously Frechet differentiable on the finite neighborhood under consideration and the next-scale norm is Hilbertian, then for \(\Delta_j=g'_j-g_j\) and \(g_{j,t}=g_j+t\Delta_j\),
\[
F_j(g'_j)-F_j(g_j)=\int_0^1 DF_j(g_{j,t})[\Delta_j]dt,
\]
and Jensen's inequality gives
\begin{equation}\label{eq:secant-gram-verification}
\|F_j(g'_j)-F_j(g_j)\|_{j+1}^2
\le
\Delta_j^\top
\left(\int_0^1 DF_j(g_{j,t})^*DF_j(g_{j,t})dt\right)
\Delta_j.
\end{equation}
Thus condition (ii) holds with the pairwise secant Gram in parentheses, or with any positive-semidefinite Loewner envelope that dominates it uniformly over the finite neighborhood.  For an exponential-family RG step
\[
\mathcal R_j^g(S)(\Phi)=-\log\int\exp\Bigl\{-S(\Phi+\psi)-\sum_{a=1}^{p_j}g_aO_{j,a}(\Phi+\psi)\Bigr\}d\gamma_j(\psi),
\]
one has
\[
\partial_{g_a}\mathcal R_j^g(S)(\Phi)=\mathbb E_{j,\Phi,g}[O_{j,a}(\Phi+\psi)].
\]
Consequently the secant Gram is the Gram matrix of the finite-scale response functions
\begin{align}\label{eq:secant-Gram}
\Gamma^{\rm sec}_{j,ab}(g,g')=
\int_0^1\langle m_{j,a}^{t},m_{j,b}^{t}\rangle_{j+1}dt,
\qquad
m_{j,a}^{t}(\Phi):=\mathbb E_{j,\Phi,g_{j,t}}[O_{j,a}(\Phi+\psi)].
\end{align}
This is the precise RG analogue of the learned feature Gram matrix in the DNN pointwise metric. The word ``analogue'' here means ``response-Gram analogue'', not ``identical matrix-layer analogue''.  The DNN metric block is \(F_{\ell-1}F_{\ell-1}^{\top}\otimes I_{d_\ell}\) because the same incoming feature Gram is repeated over \(d_\ell\) output rows.  The generic RG secant Gram \(\Gamma_j^{\rm sec}\in\mathbb R^{p_j\times p_j}\) acts on the coupling vector \(g_j\).  Without additional channel structure it does not produce the Kronecker repetition that balances DNN local and atlas costs.

\paragraph{Free Gaussian RG as the fixed-kernel baseline}\label{subsec:gaussian-rg}

Let \(E=E_C\oplus E_F\) be finite-dimensional and let a centered Gaussian field have precision matrix
\[
Q=
\begin{pmatrix}
Q_{CC} & Q_{CF}\\
Q_{FC} & Q_{FF}
\end{pmatrix},
\qquad Q_{FF}\succ0.
\]
Define the Schur-complement precision
\[
Q_C^{\rm RG}:=Q_{CC}-Q_{CF}Q_{FF}^{-1}Q_{FC}.
\]
Then marginalizing the fluctuation variables gives \(\phi_C\sim N(0,(Q_C^{\rm RG})^{-1})\), and conditionally
\[
\phi_F=-Q_{FF}^{-1}Q_{FC}\phi_C+\zeta,
\qquad \zeta\sim N(0,Q_{FF}^{-1})
\]
independently of \(\phi_C\).  This is a fixed-kernel RG step: all pointwise complexity is determined by the Schur-complement covariance and the fluctuation covariance.  It is therefore analogous to a kernel, GP, or NTK model rather than to nonlinear feature learning.

\paragraph{Hierarchical phase/subspace prior.}

For a positive semidefinite matrix \(G\in\mathbb R^{p\times p}\), a Euclidean radius \(R\), and a resolution \(u>0\), set the effective rank and the effective subspace
\[
r_{\mathrm{eff}}(G,R,u):=\max\{k:\lambda_k(G)R^2\ge u^2\},
\qquad
V_{\mathrm{eff}}(G,R,u):=\operatorname{span}\{\text{top }r_{\mathrm{eff}}\text{ eigenvectors}\}
\]
with the convention \(\max\emptyset:=0\).
Define the effective dimension
\begin{equation}\label{eq:rg-deff}
d_{\mathrm{eff}}(G,R,u):=\frac12\sum_{k=1}^{r_{\mathrm{eff}}(G,R,u)}
\log\!\left(\frac{8 R^2\lambda_k(G)}{u^2}\right).
\end{equation}
The following elementary estimate is the local-chart calculation used throughout.  For subspaces
\(V,\bar V\subset\mathbb R^p\), write
\[
\rho_{\mathrm{proj},G}(V,\bar V):=\|G^{1/2}(P_V-P_{\bar V})\|_{\mathrm{op}} .
\]

\begin{lemma}[Ellipsoidal local chart]\label{lem:ellipsoid-local-chart-rg}
Let \(r=r_{\mathrm{eff}}(G,R,u)\), let \(V=V_{\mathrm{eff}}(G,R,u)\), and let \(\bar V\in\operatorname{Gr}(p,r)\) satisfy
\[
\rho_{\mathrm{proj},G}(V,\bar V)\le \frac{u}{4R}.
\]
Let \(\pi_{\bar V}\) be the uniform law on \(B_2(2R)\cap\bar V\). Then uniformly over \(g\in B_2(R)\),
\[
\log\frac1{\pi_{\bar V}\{g':(g'-g)^\top G(g'-g)\le  u^2\}}
\le \frac12\sum_{k=1}^{r_{\mathrm{eff}}(G,R,u)}
\log\!\left(\frac{40 R^2\lambda_k(G)}{u^2}\right)\leq 2\,d_{\mathrm{eff}}(G,R,u).
\]
Equivalently, replacing \(u\) by a constant multiple gives the displayed local-chart bound used in Theorem~\ref{thm:rg-pointwise-dimension}.
\end{lemma}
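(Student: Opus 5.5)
The approach is to reduce the small-ball probability to a comparison between a Euclidean ball in \(\bar V\) and a slightly smaller ball around the projection of \(g\). Write \(P=P_{\bar V}\), \(r=r_{\mathrm{eff}}(G,R,u)\), and fix \(g\in B_2(R)\). The first step is to choose a center \(h\) inside the chart. I would take \(h:=P_{\bar V}g\). Then \(\|h\|\le R\), so \(h\) lies well inside \(B_2(2R)\cap\bar V\).

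The second step bounds the \(G\)-distance from \(g\) to any \(g'\in\bar V\) by splitting
\[
g'-g=(g'-h)+(P_{\bar V}-I)g .
\]
This gives
\[
(g'-g)^\top G(g'-g)\le 2\|G^{1/2}(g'-h)\|^2+2\|G^{1/2}(I-P_{\bar V})g\|^2 .
\]
For the second term, insert \(P_V\) and write \(I-P_{\bar V}=(I-P_V)+(P_V-P_{\bar V})\). This yields
\[
\|G^{1/2}(I-P_{\bar V})g\|\le \|G^{1/2}(I-P_V)\|_{\mathrm{op}}R+\rho_{\mathrm{proj},G}(V,\bar V)R .
\]
By definition of \(r_{\mathrm{eff}}\), \(\lambda_{r+1}(G)R^2<u^2\), so the first summand is below \(u\). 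The projection hypothesis bounds the second summand by \(u/4\).

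The constants do not close as written. A displacement below \(u\) does not leave room for \(u^2\) total after the factor \(2\). My plan is therefore to run the argument at a constant multiple of \(u\), that is, with \(r_{\mathrm{eff}}\) evaluated at \(u/c\) (for instance \(c=2\)). This is exactly the latitude the statement allows ("replacing \(u\) by a constant multiple"), and it is where the constant \(40\) versus \(8\) in the bound is absorbed. With the displacement then at most \(u/2+u/4\) in suitable units, it suffices that \(\|G^{1/2}(g'-h)\|\le u/4\) (or a similar constant fraction) for all \(g'\) in the target set.

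The third step is a volume computation inside \(\bar V\). The set
\[
\{g'\in\bar V:\|G^{1/2}(g'-h)\|\le u/4\}
\]
contains the ellipsoid \(\{w\in\bar V:(w-h)^\top P G P(w-h)\le u^2/16\}\). The eigenvalues of the compression \(PGP\) on \(\bar V\) are controlled through the closeness of \(\bar V\) to \(V\): by Cauchy interlacing together with the perturbation \(\rho_{\mathrm{proj},G}\), the \(k\)-th eigenvalue of \(PGP\) is at most \(\lambda_k(G)\) up to constants. Rather than computing eigenvalues exactly, I would inscribe a product body, namely a box aligned with the eigenvectors of \(PGP\) intersected with a Euclidean ball of radius \(\min(R,\cdot)\). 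That body stays inside \(B_2(2R)\) because \(\|h\|\le R\).

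The ratio of its volume to \(\mathrm{vol}(B_2(2R)\cap\bar V)\) is then bounded below by
\[
\prod_{k\le r}\min\!\left\{1,\frac{u}{c'R\sqrt{\lambda_k}}\right\}
\]
up to factors \(c''^{\,r}\). Each factor has \(\lambda_kR^2\ge u^2\), so the \(c''\) losses can be absorbed into the logarithm. This gives the bound
\[
\tfrac12\sum_{k\le r}\log\!\left(40R^2\lambda_k/u^2\right).
\]
The final inequality, \(\le 2d_{\mathrm{eff}}\), follows since \(\log(40x)\le 2\log(8x)\) whenever \(x\ge1\), because \(40\le 64\).

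The main obstacle I expect is the third step: controlling the volume of a \(G\)-ellipsoid inside \(\bar V\) in terms of the eigenvalues of \(G\) rather than those of \(PGP\), with explicit constants matching \(40\). If the interlacing route is messy, I would fall back on the cruder containment
\[
\|G^{1/2}Pw\|\le \|G^{1/2}P_Vw\|+\rho_{\mathrm{proj},G}\|w\|,
\]
and compare with the ellipsoid \(\{w:\|G^{1/2}P_V w\|\le u/8\}\cap\{\|w\|\le u/(8\rho)\}\), which sidesteps the spectrum of \(PGP\) at the cost of a constant.
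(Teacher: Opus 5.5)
Your skeleton (center at \(h=P_{\bar V}g\), bound the displacement \(\|G^{1/2}(I-P_{\bar V})g\|\), then compare volumes inside \(\bar V\)) is the volume-ratio argument the paper imports from \citet{li-xu-pointwise-generalization-dnn}. Your last step, \(\log(40x)\le 2\log(8x)\) for \(x\ge 1\), is the paper's identity \(\frac12\sum_k\log(40R^2\lambda_k/u^2)=d_{\mathrm{eff}}+\frac{\log 5}{2}r_{\mathrm{eff}}\le 2d_{\mathrm{eff}}\). But the obstruction you flagged is not a matter of constants. With the cutoff \(\lambda_kR^2\ge u^2\) defining \(r_{\mathrm{eff}}\), the tail \(\|G^{1/2}(I-P_V)g\|\le\sqrt{\lambda_{r+1}}\,R\) can use up essentially the whole radius \(u\), and the displayed inequality is false. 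Take \(p=2\), \(R=u=1\), \(G=\mathrm{diag}(1,1-\epsilon)\), so \(r=1\) and \(V=\mathrm{span}(e_1)\); choose \(\bar V=V\) (projection distance \(0\)) and \(g=e_2\). The target set is \(\{te_1:|t|\le\sqrt\epsilon\}\), with \(\pi_{\bar V}\)-mass \(\sqrt\epsilon/2\). So the left side equals \(\log(2/\sqrt\epsilon)\to\infty\), while the right side is \(\frac12\log 40\). Tilting \(\bar V\) within the allowed tolerance can even push the \(G\)-distance from some \(g\in B_2(R)\) to \(\bar V\) above \(u\), making the set empty. No bookkeeping in your second step can close this. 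The same holds for the paper's deferred argument, whose final step \(\frac{\log5}{2}r_{\mathrm{eff}}\le d_{\mathrm{eff}}\) relies on exactly this cutoff.

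Your proposed fix does not deliver the displayed bound either. Evaluating \(r_{\mathrm{eff}}\) at \(u/c\) changes the hypotheses: the chart must then have rank \(r_{\mathrm{eff}}(G,R,u/c)\) and lie within \(u/(4cR)\) of \(V_{\mathrm{eff}}(G,R,u/c)\). It also changes the summation range, which now includes indices with \(u^2/c^2\le\lambda_kR^2<u^2\). This has nothing to do with \(40\) versus \(8\), and the constant \(40\) is asserted rather than tracked.

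Read as a rescaled lemma, your fix amounts to enlarging the \(G\)-ball while keeping \(r\), \(V\), \(\bar V\) and the tolerance, and it is cleanest to do this directly. Use the triangle inequality for the seminorm \(\|G^{1/2}\cdot\|\) instead of \((a+b)^2\le 2a^2+2b^2\); this gives \(\|G^{1/2}(g-h)\|\le\frac54u\). Use Cauchy interlacing exactly, \(\mu_k:=\lambda_k(P_{\bar V}GP_{\bar V}|_{\bar V})\le\lambda_k(G)\), with no perturbation argument or inscribed box. The ellipsoid in \(\bar V\) with semi-axes \(\min\{u/\sqrt{\mu_k},R\}\ge u/\sqrt{\lambda_k}\) (using \(\lambda_kR^2\ge u^2\) for \(k\le r\)) lies in \(\{w:w^\top Gw\le u^2\}\cap B_2(R)\). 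Its translate by \(h\) lies in \(B_2(2R)\cap\bar V\) because \(\|h\|\le R\). Hence
\[
\log\frac{1}{\pi_{\bar V}\{g':\|G^{1/2}(g'-g)\|\le\tfrac94u\}}
\le\frac12\sum_{k=1}^{r}\log\frac{4R^2\lambda_k}{u^2}
\le d_{\mathrm{eff}}(G,R,u),
\]
which is what Theorem~\ref{thm:rg-pointwise-dimension} needs once the factor \(9/4\) is absorbed into the condition \(\sum_j u_j^2\le c\varepsilon^2\).
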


Lemma~\ref{lem:ellipsoid-local-chart-rg} is based on the same finite-dimensional volume-ratio argument used in the DNN local-chart bound of \citet[Lemma 2]{li-xu-pointwise-generalization-dnn}; we refer to \citet[Section C.3]{li-xu-pointwise-generalization-dnn} for the proof. There are only two minor changes. First, for simplicity, we enlarge the global radius of the prior support from 1.58R to 2R, which does not affect the argument. Second, in the final step we use the cutoff definition~\eqref{eq:rg-deff} of the effective dimension to obtain
\begin{align*}
\frac12\sum_{k=1}^{r_{\mathrm{eff}}(G,R,u)}
\log\!\left(\frac{40R^2\lambda_k(G)}{u^2}\right)
=
d_{\mathrm{eff}}(G,R,u)
+\frac{\log 5}{2}\,r_{\mathrm{eff}}(G,R,u)
\le
2\,d_{\mathrm{eff}}(G,R,u).
\end{align*}
This observation also shows that the absolute-constant rescaling of the radius in effective-dimension terms of the form $d_{\mathrm{eff}}(G,C \cdot R,u)$ in \cite{li-xu-pointwise-generalization-dnn} can be refined to an absolute-constant multiplicative factor $C\cdot d_{\mathrm{eff}}(G,R,u)$, while keeping the effective-rank term at the original radius R. We leave the systematic incorporation of this refinement into the formal statements of \cite{li-xu-pointwise-generalization-dnn} and related follow-up work to future versions.

Let \(\mathcal A_j\) be a finite set of phase labels at level \(j\), with prior weights \(q_j(a)>0\).  Conditional on a phase label \(a\) and rank \(r\), let \(\nu_{j,a,r}\) be a prior on the Grassmannian \(\operatorname{Gr}(p_j,r)\).  For \(G_j=G_j(g,\varepsilon)\), define the atlas cost
\begin{equation}\label{eq:atlas-cost}
\mathfrak A_j(g,\varepsilon,u_j)
:=
\log\frac1{q_j(a_j(g))}
+
\log(1+p_j)
+
\log\frac1{\nu_{j,a_j,r_j}
\bigl(B_{\rho_{\mathrm{proj}},G_j}(V_j,u_j/(4R_j))\bigr)},
\end{equation}
where \(r_j=r_{\mathrm{eff}}(G_j,R_j,u_j)\), \(V_j=V_{\mathrm{eff}}(G_j,R_j,u_j)\), and \(a_j(g)\) is any chart containing the trajectory at level \(j\). 

\begin{theorem}[Finite-scale RG pointwise dimension]\label{thm:rg-pointwise-dimension}
Assume the metric domination \eqref{eq:rg-metric-domination}.  Choose resolutions \(u_j>0\) such that \(\sum_{j=0}^{J-1}u_j^2\le c\varepsilon^2\).  Construct a trajectory-independent hierarchical prior \(\Pi\) by independently, at each level \(j\), sampling a phase label \(a\), an effective rank \(r\), a reference subspace \(\bar V\in\operatorname{Gr}(p_j,r)\), and then sampling \(g_j\) uniformly in \(B_2(2R_j)\cap\bar V\).  Then, there exists an absolute constant $C>0$ such that uniformly for every trajectory \(g\),
\begin{equation}\label{eq:rg-pointwise-dim-main}
\log\frac1{\Pi(B_\rho(g,\varepsilon))}
\le
C\sum_{j=0}^{J-1}
\left[
 d_{\mathrm{eff}}(G_j(g,\varepsilon),R_j,u_j)
 +
 \mathfrak A_j(g,\varepsilon,u_j)
\right].
\end{equation}
\end{theorem}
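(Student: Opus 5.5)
The plan is a product-of-levels argument, run exactly as in the DNN pointwise-dimension proof of \citet{li-xu-pointwise-generalization-dnn}. The metric domination \eqref{eq:rg-metric-domination} reduces membership in \(B_\rho(g,\varepsilon)\) to levelwise ellipsoid conditions. Indeed, if \(g'\in\mathcal N(g,\varepsilon)\) satisfies
\[
(g'_j-g_j)^\top G_j(g,\varepsilon)(g'_j-g_j)\le u_j^2\qquad\text{for every } j,
\]
then \(\rho(S_J(g'),S_J(g))^2\le\sum_j u_j^2\le c\,\varepsilon^2\le\varepsilon^2\) once \(c\le 1\). Hence
\[
\Pi(B_\rho(g,\varepsilon))\ge\Pi\Bigl(\bigcap_j E_j\Bigr)-(\text{mass outside }\mathcal N),
\]
where \(E_j\) is the \(j\)-th ellipsoid event. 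I will arrange that the ellipsoid events already lie inside \(\mathcal N(g,\varepsilon)\). The natural way is to take \(\mathcal N\) to be the product of the level ellipsoids, or to shrink \(c\) so that they sit inside the given neighborhood. This is the one point where the statement leaves \(\mathcal N\) unspecified, so I will state the convention explicitly.

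Since \(\Pi\) is a product over levels, \(\Pi(\bigcap_j E_j)=\prod_j\Pi_j(E_j)\), and it suffices to bound \(\log(1/\Pi_j(E_j))\) at a single level. At level \(j\), I lower bound the mixture by restricting to three choices:
\begin{itemize}
\item the phase label \(a_j(g)\), which has probability \(q_j(a_j(g))\);
\item the rank \(r_j=r_{\rm eff}(G_j,R_j,u_j)\), which has probability at least \(1/(1+p_j)\) if the rank is sampled uniformly from \(\{0,\ldots,p_j\}\) (I fix this as part of the construction);
\item the set of reference subspaces \(\bar V\) with \(\rho_{{\rm proj},G_j}(V_j,\bar V)\le u_j/(4R_j)\), which has \(\nu_{j,a_j,r_j}\)-mass equal to the ball appearing in \eqref{eq:atlas-cost}.
\end{itemize}
For every such \(\bar V\), Lemma~\ref{lem:ellipsoid-local-chart-rg}, applied with \(G=G_j\), \(R=R_j\), \(u=u_j\) and using \(g_j\in B_2(R_j)\), gives \(\pi_{\bar V}(E_j)\ge \exp(-2d_{\rm eff}(G_j,R_j,u_j))\). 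Multiplying the four factors and taking logarithms yields
\[
\log\frac1{\Pi_j(E_j)}\le 2d_{\rm eff}(G_j,R_j,u_j)+\mathfrak A_j(g,\varepsilon,u_j).
\]
Summing over \(j\) gives \eqref{eq:rg-pointwise-dim-main} with \(C=2\).

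The step I expect to be the main obstacle is the compatibility between the resolutions and the neighborhood. The matrices \(G_j(g,\varepsilon)\) are only valid inside \(\mathcal N(g,\varepsilon)\), while the prior sample \(g'\) must be shown to land there, which is circular if \(\mathcal N\) is defined through \(\rho\). I would resolve this by defining \(\mathcal N(g,\varepsilon)\) to contain the product of the levelwise ellipsoids \(\{(g'_j-g_j)^\top G_j(g'_j-g_j)\le u_j^2\}\) intersected with \(\prod_j B_2(2R_j)\), as the hypothesis permits. If the constant \(c\) must absorb a factor, I would rescale \(u_j\) by a constant; by the remark after Lemma~\ref{lem:ellipsoid-local-chart-rg}, this changes \(d_{\rm eff}\) only by an absolute multiplicative constant, which is absorbed into \(C\). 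A secondary check is that the Grassmannian ball in \(\mathfrak A_j\) is evaluated with the same \(G_j\) used in the ellipsoid. This holds because both are defined from \(G_j(g,\varepsilon)\) at the fixed trajectory \(g\).
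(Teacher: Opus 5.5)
This is essentially the paper's proof: you restrict each level's mixture to the phase $a_j(g)$, the rank $r_j$, and the Grassmannian ball around $V_j$, apply Lemma~\ref{lem:ellipsoid-local-chart-rg} for the factor $e^{-2d_{\mathrm{eff}}}$, multiply over independent levels, and use \eqref{eq:rg-metric-domination} to place the product of ellipsoid events inside $B_\rho(g,\varepsilon)$; your convention that $\mathcal N(g,\varepsilon)$ contains that product event is exactly what the paper assumes implicitly (your alternative of shrinking $c$ would not work when $G_j$ is singular, since the ellipsoids are then unbounded along $\ker G_j$). Your fallback of rescaling $u_j$ is unnecessary because $c\le 1$ already suffices, and it is not harmless as you claim: shrinking $u_j$ can increase $r_{\mathrm{eff}}$ and change $V_j$ and $\mathfrak A_j$, whereas the remark after the lemma concerns rescaling $R$, not $u$.
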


\begin{proof}
For each scale choose the phase label \(a_j(g)\), the active rank \(r_j\), and a reference subspace \(\bar V_j\) within projection distance \(u_j/(4R_j)\) of \(V_j\).  Lemma~\ref{lem:ellipsoid-local-chart-rg} gives the local prior mass of the ellipsoidal \(u_j\)-ball inside this chart.  Multiplying the scale-wise prior masses and incorporating the phase, rank, and subspace probabilities yields the right-hand side of \eqref{eq:rg-pointwise-dim-main}. For a detailed explanation of the corresponding decomposition, we refer to \citet[Section~3.3.3]{li-xu-pointwise-generalization-dnn}.  Finally, if each scale perturbation lies in its ellipsoidal \(u_j\)-ball, then \eqref{eq:rg-metric-domination} and \(\sum_j u_j^2\le c\varepsilon^2\) imply \(g'\in B_\rho(g,\varepsilon)\).  Hence the product event is contained in the \(\rho\)-ball, and its prior mass lower-bounds \(\Pi(B_\rho(g,\varepsilon))\).
\end{proof}

The theorem is the RG counterpart of the DNN Riemannian-dimension calculation.  The local term \(d_{\mathrm{eff}}\) is the effective dimension of the finite-scale response Gram.  The atlas term is the price of making the prior independent of the realized RG trajectory.  Irrelevant directions do not contribute once their eigenvalues fall below the finite resolution; relevant and marginal directions are precisely the active directions that remain in the sum.

The theorem deliberately leaves $\mathfrak A_j$ explicit; structural mechanisms are needed to make the global-atlas cost comparable to the local-chart cost. In DNN, this balance comes from the matrix-weight/common-feature structure and the resulting Kronecker-repetition metric. In genuine RG settings, it must instead come from physical structure in the operator atlas, with the nonconvex example below providing a clean instance of cost balancing without a DNN-style matrix structure.

\subsection{A finite nonconvex phase-atlas example}\label{subsec:nonconvex-phase-model}

The previous theorem is abstract.  Here is the finite-volume picture to keep in mind.  A globally multimodal measure may be hard to control by one convex potential, but after one chooses a stable phase chart the fluctuation integration has a Schur-complement curvature.  The local cost is then the Gaussian or ellipsoidal cost inside that chart, while the global cost is only the code length of the phase/subspace atlas.

Let \(E=E_C\oplus E_F\), let \(\mathcal A\) be a finite set of phase labels, and write
\[
\nu(d\phi_C,d\phi_F)=\sum_{a\in\mathcal A}w_a\nu_a(d\phi_C,d\phi_F),
\qquad
\nu_a=Z_a^{-1}e^{-S_a(\phi_C,\phi_F)}
\mathbf 1_{\Omega_{C,a}\times\Omega_{F,a}}\,d\phi_Cd\phi_F,
\]
where \(w_a>0\), \(\sum_a w_a=1\), and the chart domains \(\Omega_{C,a}\) and \(\Omega_{F,a}\) are closed convex sets with nonempty interior.  Assume all integrals are finite and that, on each product chart,
\begin{equation}\label{eq:phase-chart-hessian-assumption}
\nabla^2S_a(\phi_C,\phi_F)\succeq
H_a=
\begin{pmatrix}A_a&B_a\\ B_a^\top&D_a\end{pmatrix},
\qquad
D_a\succ0,
\qquad
H_a^{\rm RG}:=A_a-B_aD_a^{-1}B_a^\top\succ0.
\end{equation}

\begin{theorem}[One-step nonconvex phase-atlas RG stability]\label{thm:phase-atlas-rg}
Under \eqref{eq:phase-chart-hessian-assumption}, the coarse marginal is again a finite phase-atlas mixture
\[
\nu_C(d\phi_C)=\sum_{a\in\mathcal A}w_a^+\nu_{C,a}(d\phi_C),
\qquad
\nu_{C,a}=Z_{C,a}^{-1}e^{-S_a^{\rm RG}(u)}\mathbf 1_{\Omega_{C,a}}\,du,
\]
where
\[
S_a^{\rm RG}(u):=-\log\int_{\Omega_{F,a}}e^{-S_a(u,v)}\,dv.
\]
Moreover \(S_a^{\rm RG}\) is \(H_a^{\rm RG}\)-strongly convex on \(\Omega_{C,a}\).  Hence every centered linear chart process
\[
Z_{a,x}:=\langle v_x,u\rangle-\mathbb E_{\nu_{C,a}}\langle v_x,u\rangle
\]
is sub-Gaussian with comparison metric
\begin{equation}\label{eq:phase-chart-canonical-metric}
d_a(x,y)^2=\langle v_x-v_y,(H_a^{\rm RG})^{-1}(v_x-v_y)\rangle .
\end{equation}
Thus the usual sub-Gaussian chaining and the pointwise peeling argument give chartwise pointwise envelopes.  To make the event simultaneous over phases, one allocates failure probabilities \(\delta_a\), which adds the atlas code \(\log(1/\delta_a)\), for instance \(\log(1/w_a^+)\) or \(\log |\mathcal A|\).
\end{theorem}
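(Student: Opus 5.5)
The proof has three parts: the mixture decomposition of the coarse marginal, Brascamp--Lieb/Prékopa-type strong convexity of the marginalized action, and the chartwise sub-Gaussian tail plus peeling.

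\emph{Mixture decomposition.} Integrate out \(\phi_F\) componentwise. Each \(\nu_a\) is supported on the product \(\Omega_{C,a}\times\Omega_{F,a}\), so
\[
\int_{\Omega_{F,a}}Z_a^{-1}e^{-S_a(u,v)}\,dv=Z_a^{-1}e^{-S_a^{\rm RG}(u)}\mathbf 1_{\Omega_{C,a}}(u).
\]
Normalize by setting \(Z_{C,a}:=\int_{\Omega_{C,a}}e^{-S_a^{\rm RG}}\), which equals \(Z_a\) by Fubini. The weights are therefore unchanged, \(w_a^+=w_a\) (or any consistent renormalization), and \(\nu_C=\sum_a w_a^+\nu_{C,a}\) follows directly.

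\emph{Strong convexity (the main step).} Write
\[
S_a(u,v)=\tfrac12(u,v)^\top H_a(u,v)+R_a(u,v),
\]
where \(R_a\) is convex on the convex product chart by \eqref{eq:phase-chart-hessian-assumption}. For the quadratic part, minimizing over \(v\) produces exactly the Schur complement \(H_a^{\rm RG}\). The tool is the Brascamp--Lieb / Prékopa-type theorem: if \(W(u,v)-\tfrac12\langle(u,v),H(u,v)\rangle\) is convex on a convex product domain, then \(-\log\int e^{-W(u,v)}dv\) minus \(\tfrac12 u^\top H^{\rm RG}u\) is convex.

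I would prove this by the change of variables \(v=w-D_a^{-1}B_a^\top u\). This decouples the quadratic form into \(\tfrac12 u^\top H_a^{\rm RG}u+\tfrac12 w^\top D_a w\), leaving a jointly convex remainder plus a convex term in \(w\). Prékopa's theorem then shows that the \(w\)-marginal of a log-concave density is log-concave, which gives the Schur-complement lower bound.

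The obstacle is that the translated fiber domain \(\Omega_{F,a}+D_a^{-1}B_a^\top u\) depends on \(u\). Prékopa still applies, because the indicator of the convex set \(\{(u,w):w-D_a^{-1}B_a^\top u\in\Omega_{F,a},\,u\in\Omega_{C,a}\}\) is log-concave jointly. I expect this bookkeeping to be the main technical check.

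\emph{Sub-Gaussianity and pointwise envelopes.} Once \(\nu_{C,a}\propto e^{-S_a^{\rm RG}}\mathbf 1_{\Omega_{C,a}}\) is \(H_a^{\rm RG}\)-strongly log-concave on a convex domain, the Bakry--Émery / Brascamp--Lieb argument (equivalently Caffarelli contraction or the log-Sobolev inequality with matrix constant) gives a Gaussian tail for Lipschitz functionals. Applied to the linear functional \(u\mapsto\langle v_x-v_y,u\rangle\), it yields
\[
\mathbb E\exp(\lambda(Z_{a,x}-Z_{a,y}))\le\exp\bigl(\lambda^2d_a(x,y)^2/2\bigr),
\]
with \(d_a\) as in \eqref{eq:phase-chart-canonical-metric}. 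This makes each chart process sub-Gaussian with respect to \(d_a\).

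Fernique--Talagrand majorizing-measure upper bounds hold for sub-Gaussian increments. Hence Lemma~\ref{lem:anchored-subset-mm} goes through verbatim. Lemma~\ref{lem:subset-gaussian-estimate} also goes through, with Borell--Tsirelson replaced by a generic-chaining tail bound, and the peeling Lemma~\ref{lem:gaussian-peeling} is unchanged. This gives chartwise pointwise envelopes.

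Finally, apply each chart's envelope at confidence \(\delta_a\) with \(\sum_a\delta_a\le\delta\), for instance \(\delta_a=w_a^+\delta\) or \(\delta/|\mathcal A|\), and take a union bound. This adds the atlas code \(\log(1/\delta_a)\) inside the logarithm.
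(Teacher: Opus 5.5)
Your proposal is correct and follows the paper's proof step for step. It uses Fubini for the mixture with $w_a^+=w_a$, the Schur-complement decoupling plus Pr\'ekopa--Leindler for $H_a^{\rm RG}$-strong convexity, a Herbst/log-Sobolev sub-Gaussian bound for linear observables, and then chaining, peeling, and a union bound over phases.

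The only difference is cosmetic. The paper expresses your change of variables $v=w-D_a^{-1}B_a^\top u$ as the matrix identity $H_a-\mathrm{diag}(H_a^{\rm RG},0)\succeq0$. Hence $S_a(u,v)-\tfrac12u^\top H_a^{\rm RG}u$ is already jointly convex on the fixed product chart, and Pr\'ekopa--Leindler applies directly. The $u$-dependent translated fiber that you flagged as the main technical check therefore never arises.
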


\begin{proof}
The marginal formula is just integration over \(E_F\).  If the normalized chart measures \(\nu_a\) are used, the normalized marginal weights are still \(w_a^+=w_a\); otherwise the weights are renormalized by the corresponding marginal partition functions.

The curvature statement is the only real step.  Subtract the coarse quadratic form from the Hessian lower bound.  The Schur complement identity gives
\[
H_a-
\begin{pmatrix}H_a^{\rm RG}&0\\0&0\end{pmatrix}
=
\begin{pmatrix}B_aD_a^{-1}B_a^\top&B_a\\ B_a^\top&D_a\end{pmatrix}
=
\begin{pmatrix}B_aD_a^{-1/2}\\D_a^{1/2}\end{pmatrix}
\begin{pmatrix}D_a^{-1/2}B_a^\top&D_a^{1/2}\end{pmatrix}
\succeq0 .
\]
Therefore
\[
(u,v)\longmapsto S_a(u,v)-\frac12u^\top H_a^{\rm RG}u
\]
is convex on the product chart.  Adding the convex indicators of \(\Omega_{C,a}\) and \(\Omega_{F,a}\) preserves convexity.  Pr\'ekopa--Leindler then implies that
\[
S_a^{\rm RG}(u)-\frac12u^\top H_a^{\rm RG}u
\quad\text{is convex on }\Omega_{C,a}.
\]
Equivalently, the coarse chart is \(H_a^{\rm RG}\)-strongly log-concave.

For a linear observable, the Herbst argument (the standard route from a log-Sobolev inequality to Gaussian concentration) applied to a strongly log-concave measure gives
\[
\mathbb E_{\nu_{C,a}}\exp\{\lambda(Z_{a,x}-Z_{a,y})\}
\le
\exp\left(\frac{\lambda^2}{2}
\langle v_x-v_y,(H_a^{\rm RG})^{-1}(v_x-v_y)\rangle\right),
\]
which is the metric \eqref{eq:phase-chart-canonical-metric}.  Applying the standard sub-Gaussian chaining bound on a fixed chart and then the same pointwise peeling used in Theorem~\ref{thm:pointwise-gaussian} gives the stated envelopes.  A final union bound over phase labels with probabilities \(\delta_a\) produces the atlas code.
\end{proof}

\begin{proposition}[Selected phase subfamily versus global class-wide cover]
\label{prop:phase-atlas-global-cover-separation}
In the coarse charts of Theorem~\ref{thm:phase-atlas-rg}, let \(\mathcal T_{\rm full}\) be a finite observable class and let \(\mathcal T_a\subseteq\mathcal T_{\rm full}\) be the subfamily relevant to phase \(a\).  Define
\[
\ell_a(x)^2:=\langle v_x,(H_a^{\rm RG})^{-1}v_x\rangle,
\qquad
L_a:=\sup_{x\in\mathcal T_a}\ell_a(x),
\qquad
L_{\rm glob}:=\sup_{b\in\mathcal A}\sup_{x\in\mathcal T_{\rm full}}\ell_b(x).
\]
For an atlas prior \(q\in\Delta(\mathcal A)\), with \(q(a)>0\), the selected phase validation is
\begin{equation}\label{eq:phase-atlas-selected-cover-bound}
\nu_{C,a}\!\left\{
\sup_{x\in\mathcal T_a}|Z_{a,x}|
\le
C L_a\sqrt{\log\frac{2|\mathcal T_a|}{\delta q(a)}}
\right\}
\ge 1-\delta q(a).
\end{equation}
If the phase and subatlas structure is ignored, the full class-wide singleton cover gives only
\begin{equation}\label{eq:phase-atlas-global-cover-bound}
\nu_{C,a}\!\left\{
\sup_{x\in\mathcal T_{\rm full}}|Z_{a,x}|
\le
C L_{\rm glob}\sqrt{\log\frac{2|\mathcal T_{\rm full}|}{\delta}}
\right\}
\ge 1-\delta .
\end{equation}
There are non-log-concave Gaussian-mixture phase atlases for which, on a selected phase with \(q(a)\) bounded below,
\[
\mathbb E\sup_{x\in\mathcal T_a}|Z_{a,x}|\asymp\sqrt{\log K},
\qquad
\mathbb E\sup_{x\in\mathcal T_{\rm full}}|Z_{a,x}|\asymp\sqrt{\log(NK)}.
\]
Hence the full-cover scale can be strictly larger.  If \(q\) is uniform over all \(N\) phases, the selected bound already pays \(\log N\); in that case this particular atlas advantage disappears.
\end{proposition}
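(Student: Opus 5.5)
Both displayed bounds come from sub-Gaussian maximal inequalities inside the chart $\nu_{C,a}$, with the failure probability split across phases; the separation comes from an explicit Gaussian-mixture atlas. The driving input is the chart increment bound of Theorem~\ref{thm:phase-atlas-rg}. Because the process is centered, comparing $Z_{a,x}$ with the degenerate anchor $v=0$ gives, for every $x$,
\[
\mathbb E_{\nu_{C,a}}e^{\lambda Z_{a,x}}\le e^{\lambda^2\ell_a(x)^2/2}.
\]
Hence each single observable has two-sided tail $\nu_{C,a}\{|Z_{a,x}|>t\}\le 2e^{-t^2/(2\ell_a(x)^2)}$. We do not need chaining, since both families are finite.

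For \eqref{eq:phase-atlas-selected-cover-bound}, I would take a union bound over $x\in\mathcal T_a$ at level $t=L_a\sqrt{2\log(2|\mathcal T_a|/(\delta q(a)))}$. The total failure probability is then at most $\delta q(a)$, and $\sqrt2$ is absorbed into $C$. Summing the failure probabilities $\delta q(a)$ over $a$ gives at most $\delta$, which is the atlas allocation $\delta_a=\delta q(a)$ described in Theorem~\ref{thm:phase-atlas-rg}. This makes the events simultaneous over phases at total cost $\delta$.

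For \eqref{eq:phase-atlas-global-cover-bound}, I would run the same union bound over all of $\mathcal T_{\rm full}$. Each variance proxy $\ell_a(x)$ is bounded by $L_{\rm glob}$, and $t=L_{\rm glob}\sqrt{2\log(2|\mathcal T_{\rm full}|/\delta)}$ gives failure probability at most $\delta$.

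For the separating example I would construct the atlas explicitly, which I expect to be the main work. Take $E_C=\mathbb R^{NK}$ with coordinate blocks $B_1,\ldots,B_N$, each of size $K$, and $N$ phases. Phase $a$ is the Gaussian $N(m_a,I)$ with means $m_a$ spread far apart, so the mixture is multimodal and not log-concave. It still fits the chart framework: use half-space or Voronoi convex charts, or simply take untruncated Gaussian components on $\mathbb R^{NK}$ with $H_a^{\rm RG}=I$ after a trivial fine variable. Set $\mathcal T_{\rm full}=\{e_i\}_{i\le NK}$ and $\mathcal T_a=\{e_i:i\in B_a\}$, so $\ell_a\equiv1$ and $L_a=L_{\rm glob}=1$. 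Under $\nu_{C,a}$ the centered variables $Z_{a,x}$ are i.i.d.\ standard normal. The classical two-sided Gaussian maximum estimate then gives $\mathbb E\max_{i\le n}|g_i|\asymp\sqrt{\log n}$, which yields both displayed orders $\sqrt{\log K}$ and $\sqrt{\log(NK)}$. With $q$ concentrated on phase $a$, say $q(a)\ge1/2$ and the remaining mass spread over the other phases, the selected bound scales as $\sqrt{\log(K/\delta)}$. Taking $N\gg K$ makes the full-cover scale strictly larger.

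For the closing remark, note that if $q$ is uniform then $\log(1/q(a))=\log N$. The selected bound becomes $\sqrt{\log(NK/\delta)}$, which matches the global one.

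The one delicate point is that the chart charts must be convex with nonempty interior while the overall mixture remains non-log-concave. I would handle this by using Voronoi cells of well-separated means, so each chart is a convex polyhedron. The truncation then changes the Gaussian component only by an exponentially small amount, and the orders are unaffected because the chart measure stays $I$-strongly log-concave.
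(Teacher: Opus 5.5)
Your proposal is correct and follows the paper's proof essentially step for step. Both use the per-observable MGF bound from Theorem~\ref{thm:phase-atlas-rg}, union bounds at failure levels $\delta q(a)$ and $\delta$ with scales $L_a$ and $L_{\rm glob}$, and an untruncated identity-Hessian Gaussian mixture, with a trivial fine variable, under which the centered coordinate observables are i.i.d.\ standard normal. The differences are cosmetic: the paper places the means on extra coordinates $m_a=Be_{a,0}$ with $B\ge4\sqrt{\log(N+1)}$ and verifies non-log-concavity explicitly through the midpoint inequality $f(z)\le\varphi(0)e^{-B^2/4}<\sqrt{w_1w_2}\,\varphi(0)$, where you only assert it. The paper also simply takes the whole spaces as charts; these are closed, convex, with nonempty interior, and need not be disjoint. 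So your Voronoi truncation, which would make the i.i.d.\ claim only approximate, is unnecessary.
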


\begin{proof}
The two displayed bounds are elementary consequences of the sub-Gaussian estimate from Theorem~\ref{thm:phase-atlas-rg}.  For each \(a,x\),
\[
\mathbb E_{\nu_{C,a}}e^{\lambda Z_{a,x}}
\le
\exp\left(\frac{\lambda^2\ell_a(x)^2}{2}\right),
\qquad \lambda\in\mathbb R.
\]
Thus
\[
\nu_{C,a}\{|Z_{a,x}|>t\}
\le 2\exp\left(-\frac{t^2}{2\ell_a(x)^2}\right),
\]
with the deterministic interpretation if \(\ell_a(x)=0\).  A union bound over \(\mathcal T_a\), with failure probability \(\delta q(a)\), gives \eqref{eq:phase-atlas-selected-cover-bound}; a union bound over \(\mathcal T_{\rm full}\), with the worst chart scale \(L_{\rm glob}\), gives \eqref{eq:phase-atlas-global-cover-bound}.  For continuous classes, the same comparison replaces singleton counts by local and global covering or chaining functionals.

We now give the promised strict example.  Fix \(N,K\ge2\).  Let
\(E_C=\mathbb R^{N(K+1)}\) with orthonormal coordinates
\(e_{a,0},e_{a,1},\ldots,e_{a,K}\), let \(E_F=\mathbb R\), and set
\[
S_a(u,v)=\frac12\|u-m_a\|_2^2+\frac12v^2,
\qquad
m_a=B e_{a,0},
\qquad
B\ge4\sqrt{\log(N+1)}.
\]
Take the product charts to be the whole spaces.  Then \(H_a^{\rm RG}=I\) and \(\nu_{C,a}=N(m_a,I)\).  With weights \(w_1=1/2\) and \(w_a=1/[2(N-1)]\) for \(a\ge2\), the coarse mixture is not log-concave.  Indeed, if \(f\) is its density and \(z=(m_1+m_2)/2\), then \(f(m_i)\ge w_i\varphi(0)\), while every component contributes at most \(\varphi(0)e^{-B^2/4}\) at \(z\).  Hence \(f(z)\le \varphi(0)e^{-B^2/4}<\sqrt{w_1w_2}\varphi(0)\), contradicting the midpoint inequality required by log-concavity.

Define
\[
\mathcal T_{\rm full}:=\{(b,k):1\le b\le N,
1\le k\le K\},
\qquad
Y_{b,k}(u)=\langle e_{b,k},u\rangle,
\qquad
\mathcal T_a:=\{(a,k):1\le k\le K\}.
\]
Under the selected chart \(a=1\), the centered variables
\[
Z_{1,(b,k)}=\langle e_{b,k},G\rangle,
\qquad G\sim N(0,I),
\]
are independent standard Gaussians over the full index set.  Therefore
\[
\mathbb E\max_{1\le k\le K}|G_{1,k}|\asymp\sqrt{\log K},
\qquad
\mathbb E\max_{1\le b\le N,\,1\le k\le K}|G_{b,k}|\asymp\sqrt{\log(NK)}.
\]
This proves the strict separation when \(q(1)\) is bounded below.  If \(q\) is uniform, \eqref{eq:phase-atlas-selected-cover-bound} contains \(\log(1/q(1))=\log N\), so the selected chart pays the same phase uncertainty as the full cover.  The gain is therefore a real prior/trajectory-aligned atlas gain, not a free removal of an unknown phase.
\end{proof}

\begin{corollary}[Pure-phase charts for a finite-volume double-well \(\Phi^4\) model]\label{cor:double-well-atlas-rg}
Let \(G=(V,E)\) be finite and
\[
S(\phi)=\frac12\sum_{\{u,v\}\in E}c_{uv}(\phi_u-\phi_v)^2+
\lambda\sum_{u\in V}(\phi_u^2-a^2)^2.
\]
Fix \(m>a/\sqrt3\) and define the two pure-phase charts
\[
\Omega_+=\{\phi:\phi_u\ge m\ \forall u\},
\qquad
\Omega_-=-\Omega_+.
\]
On either chart the Hessian is bounded below by
\begin{equation}\label{eq:double-well-chart-hessian}
L_G+4\lambda(3m^2-a^2)I,
\end{equation}
where \(L_G\) is the graph Dirichlet Hessian.  Consequently Theorem~\ref{thm:phase-atlas-rg} applies to every finite splitting \(V=C\sqcup F\) on these pure-phase restrictions.  A full double-well decomposition would require additional droplet or interface charts with their own Hessian bounds.
\end{corollary}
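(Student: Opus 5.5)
The plan is to compute the Hessian of \(S\) explicitly and bound it below entrywise on the chart, and then feed the result into Theorem~\ref{thm:phase-atlas-rg}. Write \(S=S_{\rm grad}+S_{\rm pot}\), where \(S_{\rm grad}(\phi)=\frac12\sum_{\{u,v\}\in E}c_{uv}(\phi_u-\phi_v)^2\) and \(S_{\rm pot}(\phi)=\lambda\sum_u(\phi_u^2-a^2)^2\). I assume \(c_{uv}\ge0\) and \(\lambda\ge0\), which is the standing ferromagnetic convention; with \(\lambda>0\) the bound is nontrivial.

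\textbf{Gradient part.} \(S_{\rm grad}\) is a quadratic form, so its Hessian is the constant matrix \(L_G\), the weighted graph Laplacian (Dirichlet Hessian). It is positive semidefinite and does not depend on \(\phi\).

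\textbf{Potential part.} \(S_{\rm pot}\) is separable, so its Hessian is diagonal with entries
\[
\frac{d^2}{dt^2}\lambda(t^2-a^2)^2=\lambda(12t^2-4a^2)=4\lambda(3t^2-a^2),
\qquad t=\phi_u .
\]
On \(\Omega_+\) we have \(\phi_u\ge m>0\), so \(\phi_u^2\ge m^2\). On \(\Omega_-\) we have \(\phi_u\le -m\), so again \(\phi_u^2\ge m^2\). Each diagonal entry is therefore at least \(4\lambda(3m^2-a^2)\), and this is strictly positive because \(m>a/\sqrt3\). Summing the two parts gives \eqref{eq:double-well-chart-hessian} in the Loewner order, uniformly on both charts. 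The charts are closed convex product sets (orthants shifted by \(\pm m\)) with nonempty interior, and they are invariant under the symmetry \(\phi\mapsto-\phi\), which preserves \(S\). So the \(\Omega_-\) case is literally the same computation.

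\textbf{Applying Theorem~\ref{thm:phase-atlas-rg}.} For a splitting \(V=C\sqcup F\), each chart is the product \(\Omega_{C}\times\Omega_{F}\) of coordinatewise half-lines, so the product-chart hypothesis holds. Set \(\kappa:=4\lambda(3m^2-a^2)>0\) and \(H:=L_G+\kappa I\), and partition \(H\) into blocks \(A,B,D\) along \(C,F\). Then \(D=L_G|_{FF}+\kappa I\succeq\kappa I\succ0\). Since \(H\succeq\kappa I\succ 0\), the Schur complement \(A-BD^{-1}B^\top\) is the inverse of the \(CC\) block of \(H^{-1}\), hence positive definite (indeed \(\succeq\kappa I\)). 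This verifies \eqref{eq:phase-chart-hessian-assumption}.

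The remaining hypothesis is finiteness of the integrals. It holds because \(S\) grows quartically, so \(e^{-S}\) is integrable on each chart. The mixture weights can be taken equal by symmetry.

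\textbf{Main obstacle.} The only delicate point is the final remark, which is a caveat rather than a claim to prove. The two pure-phase charts do not cover \(\mathbb R^V\): configurations with mixed signs or with small \(|\phi_u|\) are excluded. On those regions the single-site curvature \(4\lambda(3\phi_u^2-a^2)\) can be negative, so no uniform positive Hessian bound exists there. I would note this explicitly: the corollary only asserts applicability on the restricted pure-phase measures, and any full decomposition would need extra charts, with Hessian lower bounds that must be established separately.
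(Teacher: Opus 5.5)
Your proposal is correct and follows the paper's proof. The single-site curvature $4\lambda(3t^2-a^2)\ge 4\lambda(3m^2-a^2)>0$ for $|\phi_u|\ge m$ plus the constant Laplacian Hessian gives the bound, and positive definiteness of $L_G+4\lambda(3m^2-a^2)I$ then makes both the fluctuation block and the Schur complement positive definite. Your extra details (the Schur-complement bound $\succeq 4\lambda(3m^2-a^2)I$, the product-chart structure, integrability) are fine; two small caveats are that $\phi\mapsto-\phi$ swaps $\Omega_+$ and $\Omega_-$ rather than leaving each invariant, and that $\lambda>0$ and $a\ge 0$ are tacitly needed (so that $\kappa>0$ and $m>0$), exactly as in the paper.
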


\begin{proof}
On \(\Omega_+\cup\Omega_-\), \(|\phi_u|\ge m\), and
\[
\frac{d^2}{dt^2}\lambda(t^2-a^2)^2
=4\lambda(3t^2-a^2)
\ge 4\lambda(3m^2-a^2)>0.
\]
Adding the graph Dirichlet Hessian gives \eqref{eq:double-well-chart-hessian}.  The positive diagonal curvature makes the full chart Hessian positive definite, so the fluctuation block and the Schur complement in any finite decomposition are positive.  Theorem~\ref{thm:phase-atlas-rg} then applies.
\end{proof}

\paragraph{Interpretation and relation to KLS.}
The finite nonconvex phase-atlas example should be read as a finite-cutoff bridge between convex concentration theory, high-dimensional geometry, and interacting-field intuition.  Inside one stable phase chart, a uniform Hessian lower bound gives Brascamp--Lieb covariance domination and Gaussian-type concentration for linear and Lipschitz observables \citep{brascamp-lieb-1976,bakry-emery-1985}.  A mixture of several such charts may be genuinely multimodal, so there need not be one global convex potential or one useful global Gaussian comparison.  The atlas separates the two effects: chartwise Schur-complement curvature gives the local Gaussian metric, while the phase or subspace prior pays the global nonconvexity.

This is not meant to answer the Kannan--Lovasz--Simonovits problem.  KLS asks for dimension-free isoperimetric and Poincare-type control for arbitrary isotropic log-concave measures \citep{kannan-lovasz-simonovits-1995}.  The finite-cutoff statement here is a different, more ``nonconvex'' question.  We assume stable charts where the cutoff effective action has explicit curvature, and we use that curvature to obtain an anisotropic Gaussian comparison on the chart.  Global non-log-concavity is not solved by a Cheeger inequality; it is recorded as an atlas cost.  Directions below the finite resolution are treated as approximation error rather than controlled through a global convex-geometric constant.

\section{Graph Local-Time Application }\label{sec:graph-background}

\subsection[Local time versus cover and blanket time]{Local time versus cover and blanket time}
 We give a specialized application of the pointwise-complexity principle to the control of local times\footnote{Local time is a standard tool towards cutoff-uniform local integrability control in constructive QFT \citep{dynkin-local-times-quantum-fields-1984}.} in Gaussian free fields. The results separate two distinct ways of using Gaussian information, highlighting the advantage of a field-level route. In the field-level route, one fixes a target set, controls the pinned GFF on that set by a pointwise envelope, and then transfers the resulting estimate through the Ray--Knight theorem to a local-time statement. In the global route, one first establishes that the entire graph has been covered and only afterwards deduces that the target set has been visited. These two stopping-time arguments may occur at different scales.

We prove this separation on phase-star hierarchies.  A target subatlas \(I\) is encoded by an atlas prior \(q\), and the same ambient pointwise envelope can be evaluated on the selected target.  The exact law
\[
\mathbb P\{\tau_I\le \tau(t)\}=(1-e^{-t})^{|I|},
\qquad
\mathbb P\{\tau_{\mathrm{cov}}\le \tau(t)\}=(1-e^{-t})^N
\]
shows that target cover has intrinsic inverse-root-local-time scale \(\log |I|\), whereas any route through full cover pays \(\log N\).  The pointwise envelope pays the target code length \(\log(|I|/q(I))\), and the chart-decorated phase-star adds a compressed within-chart term.

For readers coming from learning theory rather than probability, this is the graph version of the local/global separation.  The selected target set \(I\) plays the role of a pointwise subatlas; the full cover or blanket event plays the role of a global covering number over all phases.  The similarity between pointwise/local dimension, covering number, cover time, and blanket time is only partially developed here, but the star examples make the basic scale separation explicit.

\paragraph{Pinned GFF on graph, local time, and Ray--Knight.}

Let \(G=(V,E,c)\) be a finite connected weighted graph with symmetric conductances \(c_{xy}=c_{yx}\ge0\), where \(c_{xy}>0\) exactly when \(\{x,y\}\in E\).  Write
\[
c_x:=\sum_{y\in V}c_{xy}.
\]
The weighted graph Laplacian acts on functions \(f:V\to\mathbb R\) by
\[
(Lf)(x)=\sum_y c_{xy}\bigl(f(x)-f(y)\bigr).
\]
We consider the continuous-time random walk, started at a distinguished root \(v_0\), that jumps from \(x\) to \(y\) at rate \(c_{xy}/c_x\).  Its normalized local time is
\[
L_t(x)=\frac1{c_x}\int_0^t \mathbf 1\{X_s=x\}\,ds,
\qquad
\tau(t):=\inf\{s\ge0:L_s(v_0)>t\}.
\]

The pinned Gaussian free field with root \(v_0\) is the centered Gaussian vector \(\eta=\{\eta_x\}_{x\in V}\) such that \(\eta_{v_0}=0\) almost surely and whose covariance is the killed Green function
\[
\mathbb E[\eta_x\eta_y]
=
\Gamma_{v_0}(x,y)
:=\mathbb E_x[L_{T_{v_0}}(y)],
\]
where \(T_{v_0}\) is the hitting time of \(v_0\).  Equivalently, this covariance is the inverse of the pinned Laplacian on \(V\setminus\{v_0\}\).  Its canonical metric is
\[
d(x,y)=\bigl(\mathbb E|\eta_x-\eta_y|^2\bigr)^{1/2}=\sqrt{R_{\mathrm{eff}}(x,y)},
\]
where \(R_{\mathrm{eff}}\) is the effective-resistance metric of the conductance network.

We use the following finite-graph form of the second Ray--Knight theorem \citep{dynkin-local-times-quantum-fields-1984}.  Let \(\eta'\) be an independent copy of the pinned GFF.
\begin{proposition}[Second Ray--Knight isomorphism on a finite graph]\label{prop:ray-knight-graph}
With the above normalization, under the product law of the walk and the independent fields \((\eta,\eta')\),
\begin{equation}\label{eq:ray-knight}
\Bigl(L_{\tau(t)}(x)+\tfrac12\eta_x^2\Bigr)_{x\in V}
\;\stackrel{d}{=}\;
\Bigl(\tfrac12(\eta_x'+\sqrt{2t})^2\Bigr)_{x\in V}.
\end{equation}
In particular, \(L_{\tau(t)}(v_0)=t\) almost surely.
\end{proposition}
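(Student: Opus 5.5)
We would prove \eqref{eq:ray-knight} by matching multivariate Laplace transforms. Both sides are random vectors in $[0,\infty)^V$, and $\eta,\eta'$ are independent of the walk. It therefore suffices to show that, for every $\lambda\in[0,\infty)^V$ with $\Lambda:=\operatorname{diag}(\lambda)$,
\[
\mathbb E_{v_0}\Bigl[e^{-\langle\lambda,L_{\tau(t)}\rangle}\Bigr]\,
\mathbb E\Bigl[e^{-\frac12\langle\lambda,\eta^2\rangle}\Bigr]
=
\mathbb E\Bigl[e^{-\frac12\langle\lambda,(\eta'+\sqrt{2t})^2\rangle}\Bigr].
\]
Nonnegative vectors determine the law by uniqueness of Laplace transforms.

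The Gaussian side is a finite-dimensional computation in the coordinates $V\setminus\{v_0\}$, where $\Gamma:=\Gamma_{v_0}$ is invertible and equals the inverse pinned Laplacian. Completing the square gives
\[
\mathbb E\Bigl[e^{-\frac12\langle\lambda,\eta^2\rangle}\Bigr]=\det(I+\Gamma\Lambda)^{-1/2},
\]
\[
\mathbb E\Bigl[e^{-\frac12\langle\lambda,(\eta'+\sqrt{2t})^2\rangle}\Bigr]
=\det(I+\Gamma\Lambda)^{-1/2}\exp\Bigl(-t\bigl[\lambda_{v_0}+\mathbf 1^\top\Lambda(I+\Gamma\Lambda)^{-1}\mathbf 1\bigr]\Bigr).
\]
The term $\lambda_{v_0}$ appears because $\eta'_{v_0}=0$, so the root contributes $\lambda_{v_0}t$. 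The determinants cancel, and the claim reduces to
\[
\mathbb E_{v_0}\Bigl[e^{-\langle\lambda,L_{\tau(t)}\rangle}\Bigr]
=\exp\bigl(-t\,\Psi(\lambda)\bigr),
\qquad
\Psi(\lambda):=\lambda_{v_0}+\mathbf 1^\top\Lambda(I+\Gamma\Lambda)^{-1}\mathbf 1 .
\]

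For the walk side, we would use the strong Markov property at $\tau(s)$. Since $L_{\tau(s)}(v_0)=s$, the increments $L_{\tau(s+\cdot)}-L_{\tau(s)}$ are independent of $\mathcal F_{\tau(s)}$ and have the same law as $L_{\tau(\cdot)}$. Hence $s\mapsto L_{\tau(s)}$ is a $[0,\infty)^V$-valued subordinator, and its Laplace transform has the form $e^{-t\Psi_0(\lambda)}$. The root coordinate is deterministic: $L_{\tau(t)}(v_0)=t$, because $L_\cdot(v_0)$ is continuous and nondecreasing and $\tau$ is its right-continuous inverse. It remains to identify $\Psi_0=\Psi$, and we would do this by a first-order computation in $t$, that is, the exponent equals $-\tfrac{d}{dt}\big|_{t=0}$ of the Laplace transform.

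During a root holding time, local time at $v_0$ accrues at rate $1/c_{v_0}$ per unit real time, and the walk leaves at total rate $1$. So, per unit of local time at $v_0$, jumps to $y$ occur at rate $c_{v_0y}$. Each such jump launches an excursion, which contributes the factor $u(y):=\mathbb E_y\bigl[e^{-\langle\lambda,L_{T_{v_0}}\rangle}\bigr]$. This yields
\[
\Psi_0(\lambda)=\lambda_{v_0}+\sum_y c_{v_0y}\bigl(1-u(y)\bigr).
\]
By Feynman--Kac, $w:=1-u$ on $V\setminus\{v_0\}$ solves $(L_{\rm pin}+\Lambda)w=\Lambda\mathbf 1$, since $L_{\rm pin}\mathbf 1=0$ off the root's neighbors after accounting for the boundary value $u(v_0)=1$. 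Hence $w=(\Gamma^{-1}+\Lambda)^{-1}\Lambda\mathbf 1=\Gamma(I+\Lambda\Gamma)^{-1}\Lambda\mathbf 1$.

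The step we expect to be the main obstacle is the bookkeeping in this last identity. One must convert $\sum_y c_{v_0y}w(y)$ into $\mathbf 1^\top\Lambda(I+\Gamma\Lambda)^{-1}\mathbf 1$, keeping the boundary term from the neighbors of $v_0$ and the normalization $L_t(x)=c_x^{-1}\int\mathbf 1\{X_s=x\}\,ds$ consistent with $\Gamma=\mathbb E_x L_{T_{v_0}}(y)$. We would try Green's identity first: sum the equation $(L_{\rm pin}+\Lambda)w=\Lambda\mathbf 1$ over $V\setminus\{v_0\}$ against $\mathbf 1$. The Laplacian term leaves exactly the flux $\sum_y c_{v_0y}w(y)$, so
\[
\sum_y c_{v_0y}w(y)=\mathbf 1^\top\Lambda(\mathbf 1-w)=\mathbf 1^\top\Lambda(I+\Gamma\Lambda)^{-1}\mathbf 1 .
\]
The last equality uses the push-through identity $I-\Gamma(I+\Lambda\Gamma)^{-1}\Lambda=(I+\Gamma\Lambda)^{-1}$. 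This gives $\Psi_0=\Psi$ and completes the argument.
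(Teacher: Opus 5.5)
Your proposal is correct, and it takes a different route from the paper. The paper gives no proof: it imports the isomorphism from the literature, citing Dynkin; the exact form $L_{\tau(t)}+\tfrac12\eta^2\stackrel{d}{=}\tfrac12(\eta'+\sqrt{2t})^2$ is usually credited to the generalized second Ray--Knight theorem of Eisenbaum--Kaspi--Marcus--Rosen--Shi.

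You replace the citation with a self-contained finite-dimensional verification:
\begin{itemize}
\item completing the square on the Gaussian side;
\item the compound-Poisson excursion decomposition at the root, with rate $c_{v_0y}$ per unit of normalized root local time (the same Poisson structure the paper uses in the star proof);
\item Feynman--Kac for $u(y)=\mathbb E_y e^{-\langle\lambda,L_{T_{v_0}}\rangle}$;
\item the flux identity $\sum_y c_{v_0y}w(y)=\mathbf 1^\top\Lambda(\mathbf 1-w)$, which follows from the symmetry of $L_{\rm pin}$ and $L_{\rm pin}\mathbf 1=(c_{xv_0})_x$.
\end{itemize}
I checked each step, including $w=(\Gamma^{-1}+\Lambda)^{-1}\Lambda\mathbf 1$ and the push-through identity, and they close.

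The citation buys generality: it covers any recurrent symmetric Markov process with local times. Your computation buys a check that the paper's particular normalizations are mutually consistent. These are the constant-speed walk with rates $c_{xy}/c_x$, local time divided by $c_x$, and $\Gamma_{v_0}(x,y)=\mathbb E_xL_{T_{v_0}}(y)=L_{\rm pin}^{-1}$. Consistency is what guarantees the constants $\tfrac12$ and $\sqrt{2t}$ used downstream in Corollary~\ref{cor:target-local-time-envelope}.

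Four points deserve a line each:
\begin{itemize}
\item Under $\mathbb P_{v_0}$ one has $\tau(0)=0$ and $X_{\tau(s)}=v_0$ almost surely, since paths are piecewise constant and $L(v_0)$ increases immediately after $\tau(s)$. This is what the strong Markov/subordinator step and $\phi(0)=1$ require.
\item The compound-Poisson description already yields $\Psi_0$ for every $t$, so the derivative-at-zero step is redundant.
\item The remark that $L_{\rm pin}\mathbf 1=0$ off the root's neighbors is cleaner stated as: $w(v_0)=0$ and $L\mathbf 1=0$, so $(Lw)(x)=(L_{\rm pin}w)(x)$ for $x\ne v_0$.
\item $\Lambda$, $\mathbf 1$ and $\Gamma$ should be restricted consistently to $V\setminus\{v_0\}$, with $\lambda_{v_0}$ handled separately as you already do.
\end{itemize}
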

 \paragraph{Cover and blanket time.}
For a finite connected conductance graph \(G=(V,E,c)\), let \(\tau_{\mathrm{cov}}\) be the first time every vertex has been visited, and write
\begin{equation}\label{eq:tcov}
t_{\mathrm{cov}}(G):=\max_{v\in V}\mathbb E_v[\tau_{\mathrm{cov}}].
\end{equation}
A blanket time strengthens cover time by requiring that the walk has accumulated roughly stationary proportions of visits at all vertices. One convenient strong version leads to
\begin{equation}\label{eq:tbl}
t_{\mathrm{bl}}(G,\delta):=\max_{v\in V}\mathbb E_v[\tau_{\mathrm{bl}}(\delta)].
\end{equation}

\paragraph{Main results on local-time envelopes and cover/blanket criteria.}

Let \(H\subseteq V\) be a target set.  The proof below only requires a deterministic envelope for the pinned GFF on \(H\).  Such an envelope may be obtained either by applying Theorem~\ref{thm:pointwise-gaussian} to the restricted field on \(H\cup\{v_0\}\), or by applying it once to a larger ambient field and then evaluating the resulting bound on \(H\).  This distinction is useful in the phase-star examples, where one ambient atlas prior is fixed before the target subatlas is selected.

\begin{corollary}[Target-set pointwise local-time envelope]\label{cor:target-local-time-envelope}
Let \(A_H(\cdot;\delta)\) be any deterministic function satisfying
\[
\mathbb P\bigl\{ |\eta_x|\le A_H(x;\delta)\text{ for all }x\in H\bigr\}\ge 1-\delta
\]
for the pinned GFF.  Then, under the law of the walk, with probability at least \(1-\delta\),
\begin{equation}\label{eq:target-local-time-envelope}
\forall x\in H:\qquad
\bigl[t-\sqrt{2t}\,A_H(x;\delta/2)\bigr]_+
\le L_{\tau(t)}(x)
\le t+\sqrt{2t}\,A_H(x;\delta/2)+\frac12A_H(x;\delta/2)^2.
\end{equation}
\end{corollary}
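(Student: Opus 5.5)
The plan is to transfer the Gaussian envelope through the Ray--Knight identity \eqref{eq:ray-knight}. We work on the product space carrying the walk and two independent pinned GFFs \(\eta,\eta'\). Since the local-time statement concerns only the walk, we may enlarge the probability space freely: a bound that holds with probability at least \(1-\delta\) on the product space, and that involves only \(L_{\tau(t)}\), holds with the same probability under the walk's law alone. By Proposition~\ref{prop:ray-knight-graph}, the vector \(\bigl(L_{\tau(t)}(x)+\tfrac12\eta_x^2\bigr)_{x\in H}\) has the same law as \(\bigl(\tfrac12(\eta'_x+\sqrt{2t})^2\bigr)_{x\in H}\). Here \(\eta\) is independent of the walk, and \(\eta'\) is a pinned GFF.

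Write \(A(x):=A_H(x;\delta/2)\) and consider the events
\[
E:=\{|\eta_x|\le A(x)\ \forall x\in H\},
\qquad
E':=\{|\eta'_x|\le A(x)\ \forall x\in H\}.
\]
Each has probability at least \(1-\delta/2\) by hypothesis. The argument must keep \(L\) and \(\eta\) jointly distributed, since Ray--Knight only identifies the law of the sum. We therefore define two deterministic sets of vectors \((\ell,y)\in\mathbb R^H\times\mathbb R^H\), where \(y\) is the GFF coordinate:
\[
U:=\Bigl\{\forall x\in H:\ \ell_x+\tfrac12y_x^2\le \tfrac12\bigl(A(x)+\sqrt{2t}\bigr)^2\Bigr\},
\]
\[
D:=\Bigl\{\forall x\in H:\ \ell_x+\tfrac12y_x^2\ge \tfrac12\bigl([\sqrt{2t}-A(x)]_+\bigr)^2\Bigr\}.
\]
On \(E'\), every coordinate satisfies \(\tfrac12(\eta'_x+\sqrt{2t})^2\in\bigl[\tfrac12([\sqrt{2t}-A(x)]_+)^2,\ \tfrac12(\sqrt{2t}+A(x))^2\bigr]\). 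By equality in law, the event \(F:=\{(L_{\tau(t)}+\tfrac12\eta^2)\in U\cap D\}\) therefore has probability at least \(1-\delta/2\), because it depends on the sum vector only through a measurable set.

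On \(F\cap E\), which has probability at least \(1-\delta\), we derive both sides of \eqref{eq:target-local-time-envelope}. For the upper bound, \(L_{\tau(t)}(x)\le \tfrac12(\sqrt{2t}+A)^2-\tfrac12\eta_x^2\le t+\sqrt{2t}A+\tfrac12A^2\), dropping \(\eta_x^2\ge0\). For the lower bound, \(L_{\tau(t)}(x)\ge \tfrac12([\sqrt{2t}-A]_+)^2-\tfrac12\eta_x^2\ge \tfrac12([\sqrt{2t}-A]_+)^2-\tfrac12A^2\). When \(A\le\sqrt{2t}\) this equals \(t-\sqrt{2t}A\). When \(A>\sqrt{2t}\), the claimed lower bound \([t-\sqrt{2t}A]_+=0\) holds trivially since local time is nonnegative. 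Finally, since \(F\cap E\) concerns \((L,\eta)\), we project the conclusion onto the walk coordinate.

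The main obstacle is the step where the lower bound subtracts \(\tfrac12\eta_x^2\). This needs \(\eta\) controlled on the same sample point as the sum, which is why both events must be intersected on the product space rather than using the law of the sum alone. Independence of \(\eta\) from the walk is not even needed here; the union bound suffices. It remains to check measurability of \(U\cap D\) and to note that \(H\) may contain \(v_0\), where everything is consistent because \(\eta_{v_0}=\eta'_{v_0}=0\) and \(L_{\tau(t)}(v_0)=t\).
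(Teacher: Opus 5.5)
Your proposal is correct and follows the paper's proof. Both arguments transfer the \(\eta'\)-envelope through the Ray--Knight equality in law to get a two-sided bound on \(L_{\tau(t)}(x)+\tfrac12\eta_x^2\) with probability at least \(1-\delta/2\), intersect with the envelope event for \(\eta\) by a union bound, and use nonnegativity of local time for the positive part of the lower bound.

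Two cosmetic points. Your sets \(U,D\) are cleaner if defined directly on the sum vector in \(\mathbb R^H\) rather than on pairs \((\ell,y)\). Also, independence of \(\eta\) from the walk is needed for the Ray--Knight identity itself, though not for the union bound.
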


Let
\[
\tau_H:=\inf\{s\ge0: L_s(x)>0\text{ for every }x\in H\}
\]
be the target-cover time of \(H\).  The preceding envelope gives the following target-cover and target-blanket criterion.
\begin{corollary}[Target cover and target blanket criterion]\label{cor:target-cover-threshold}
Assume that for some \(t>0\), \(H\subseteq V\), and \(\theta\in(0,1/\sqrt2)\),
\[
\sup_{x\in H}A_H(x;\delta/2)\le \theta\sqrt t.
\]
Then, under the law of the walk, with probability at least \(1-\delta\),
\[
(1-\sqrt2\,\theta)t
\le L_{\tau(t)}(x)
\le \Bigl(1+\sqrt2\,\theta+\frac12\theta^2\Bigr)t
\qquad \forall x\in H.
\]
In particular, \(\tau_H\le\tau(t)\) on this event, and
\[
\max_{u,v\in H}\frac{L_{\tau(t)}(u)}{L_{\tau(t)}(v)}
\le
\frac{1+\sqrt2\,\theta+\theta^2/2}{1-\sqrt2\,\theta}.
\]
\end{corollary}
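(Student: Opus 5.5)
The plan is to deduce Corollary~\ref{cor:target-cover-threshold} directly from the two-sided envelope \eqref{eq:target-local-time-envelope} of Corollary~\ref{cor:target-local-time-envelope}. We work on the event of probability at least \(1-\delta\) on which, for every \(x\in H\),
\[
\bigl[t-\sqrt{2t}\,A_H(x;\delta/2)\bigr]_+\le L_{\tau(t)}(x)\le t+\sqrt{2t}\,A_H(x;\delta/2)+\tfrac12A_H(x;\delta/2)^2 .
\]
On this event we insert the hypothesis \(A_H(x;\delta/2)\le\theta\sqrt t\). The lower bound becomes \(t-\sqrt2\,\theta t=(1-\sqrt2\,\theta)t\). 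This quantity is strictly positive because \(\theta<1/\sqrt2\), so the positive part can be dropped.

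The upper bound becomes \(t+\sqrt2\,\theta t+\tfrac12\theta^2t\). This step uses only that the right-hand side of \eqref{eq:target-local-time-envelope} is nondecreasing in the nonnegative quantity \(A_H\), and that the left-hand side is nonincreasing in \(A_H\). Together these give the two-sided display uniformly over \(x\in H\), on the same event.

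For the target-cover claim, note that on this event \(L_{\tau(t)}(x)\ge(1-\sqrt2\theta)t>0\) for every \(x\in H\). Hence each vertex of \(H\) has positive local time by time \(\tau(t)\). By the definition of \(\tau_H\) as the first time all of \(H\) carries positive local time, together with monotonicity of \(s\mapsto L_s(x)\), this yields \(\tau_H\le\tau(t)\). One minor point is the strict inequality in the infimum: since \(L_{\tau(t)}(x)>0\) for all \(x\in H\), the time \(\tau(t)\) itself belongs to the set in the infimum, so the inequality holds.

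For the ratio bound, we divide the uniform upper bound at \(u\) by the uniform lower bound at \(v\); the lower bound is positive, as already shown. The factor \(t\) cancels and gives the stated constant.

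No genuine obstacle is expected, since everything is inherited from Corollary~\ref{cor:target-local-time-envelope} (itself from Ray--Knight plus the GFF envelope). The only care needed is bookkeeping of confidence levels: the hypothesis is stated at level \(\delta/2\) precisely so that it matches the \(A_H(\cdot;\delta/2)\) appearing in \eqref{eq:target-local-time-envelope}, whose event has probability \(1-\delta\). We therefore use that single event throughout and take no further union bound.
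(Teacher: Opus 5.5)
Your proposal is correct and follows the paper's proof: you substitute $A_H(x;\delta/2)\le\theta\sqrt t$ into the two-sided envelope of Corollary~\ref{cor:target-local-time-envelope}, use $\theta<1/\sqrt2$ to get strictly positive local time on $H$ (hence $\tau_H\le\tau(t)$), and divide the upper bound by the lower bound to obtain the ratio. Your extra remarks on monotonicity in $A_H$, the infimum defining $\tau_H$, and the matching $\delta/2$ confidence level are accurate details that the paper leaves implicit.
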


\begin{proof}[Proof of Corollary~\ref{cor:target-local-time-envelope}]
Set
\[
A_x:=A_H(x;\delta/2),\qquad
U_x:=L_{\tau(t)}(x)+\frac12\eta_x^2,
\qquad
V_x:=\frac12(\eta_x'+\sqrt{2t})^2,
\qquad x\in H.
\]
By Theorem~\ref{prop:ray-knight-graph}, \((U_x)_{x\in H}\stackrel d=(V_x)_{x\in H}\).  Define
\[
D_x:=\frac12(\sqrt{2t}-A_x)_+^2,
\qquad
B_x:=t+\sqrt{2t}\,A_x+\frac12A_x^2.
\]
By the envelope assumption applied to the independent copy \(\eta'\), the event \(\{|\eta_x'|\le A_x\text{ for all }x\in H\}\) has probability at least \(1-\delta/2\).  On this event, \(D_x\le V_x\le B_x\) for all \(x\in H\).  Equality in distribution therefore gives
\[
\mathbb P\{D_x\le U_x\le B_x\text{ for all }x\in H\}\ge1-\delta/2.
\]
Applying the same envelope to \(\eta\) itself and taking a union bound, with probability at least \(1-\delta\), we also have \(|\eta_x|\le A_x\) for all \(x\in H\).  On the intersection of these events,
\[
L_{\tau(t)}(x)=U_x-\frac12\eta_x^2\le B_x,
\]
and
\[
L_{\tau(t)}(x)\ge D_x-\frac12A_x^2.
\]
Using nonnegativity of local time and the identity
\[
\max\left\{0,\frac12(\sqrt{2t}-A_x)_+^2-\frac12A_x^2\right\}
=\bigl[t-\sqrt{2t}\,A_x\bigr]_+
\]
gives \eqref{eq:target-local-time-envelope}.  The event in the conclusion depends only on the walk, so the same probability bound holds under the walk law.
\end{proof}

\begin{proof}[Proof of Corollary~\ref{cor:target-cover-threshold}]
Under the displayed small-envelope assumption, Corollary~\ref{cor:target-local-time-envelope} gives, simultaneously for all \(x\in H\),
\[
L_{\tau(t)}(x)\ge (1-\sqrt2\,\theta)t>0
\]
and
\[
L_{\tau(t)}(x)\le \Bigl(1+\sqrt2\,\theta+\frac12\theta^2\Bigr)t.
\]
Thus every vertex in \(H\) has been visited by time \(\tau(t)\), and the displayed ratio bound follows by dividing the upper estimate by the lower estimate.
\end{proof}

\subsection{Separating examples: target subatlas versus full cover}

We now give two finite graph models.  The first is a star, where the only geometry is the choice of target leaves.  The second decorates every phase by a dense local chart, so that the within-chart effective resistance is small.  In both cases the message is the same: an ambient pointwise envelope can be fixed before the target is selected and then evaluated on the selected subatlas.  A proof that first establishes full cover loses this target dependence and pays for all phases.

\begin{theorem}[Phase-star target subatlas versus full cover]\label{thm:phase-star-separation}
Let \(S_N\) be the unit-conductance star with root \(v_0\) and leaves \(\{1,\ldots,N\}\).  Let \(\mathcal I\) be a family of nonempty target sets and let \(q\in\Delta(\mathcal I)\).  Define
\[
H_I:=I,
\qquad
\tau_I:=\inf\{s\ge0:L_s(i)>0\text{ for every }i\in I\},
\]
and the ambient prior
\begin{equation}\label{eq:star-atlas-prior}
\mu_q:=\frac12\delta_{v_0}
+\frac14\mathrm{Unif}\{1,\ldots,N\}
+\frac14\sum_{I\in\mathcal I}q(I)\,\mathrm{Unif}(I).
\end{equation}
Then the pinned GFF satisfies \(\eta_1,\ldots,\eta_N\) independent \(N(0,1)\), and the pointwise Gaussian envelope with prior \(\mu_q\) gives, with probability at least \(1-\delta\), simultaneously for all \(I\in\mathcal I\),
\begin{equation}\label{eq:star-atlas-envelope}
\sup_{i\in I}|\eta_i|
\le
C\left[
\sqrt{\log\frac{4|I|}{q(I)}}+
\sqrt{\log\!\left(\frac{2+\log\log(N+3)}{\delta}\right)}
\right].
\end{equation}
Consequently, for every \(\theta\in(0,1/\sqrt2)\), if
\begin{equation}\label{eq:star-atlas-target-upper}
t\ge C_\theta\left[
\log\frac{4|I|}{q(I)}+
\log\!\left(\frac{2+\log\log(N+3)}{\delta}\right)
\right],
\end{equation}
then \(\mathbb P\{\tau_I\le\tau(t)\}\ge1-\delta\).  On the other hand,
\begin{equation}\label{eq:star-exact-coupon-rigorous}
\mathbb P\{\tau_I\le\tau(t)\}=(1-e^{-t})^{|I|},
\qquad
\mathbb P\{\tau_{\rm cov}\le\tau(t)\}=(1-e^{-t})^N.
\end{equation}
Thus target cover has inverse-root-local-time quantile \(\log |I|+O_\delta(1)\), while full cover has quantile \(\log N+O_\delta(1)\).  If \(\log(|I|/q(I))=o(\log N)\), the direct target route is asymptotically smaller than any route that validates the target event only through full cover.
\end{theorem}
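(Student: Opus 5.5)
The plan has three parts: compute the pinned GFF on the star, bound \(\Phi_{\mu_q}\) on the leaves and feed it into Theorem~\ref{thm:pointwise-gaussian}, then pass to local times through Corollary~\ref{cor:target-cover-threshold} and compute the exact laws directly.

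\emph{Pinned field.} On the unit-conductance star, the pinned Laplacian on \(V\setminus\{v_0\}\) is the identity, since each leaf has the single edge to \(v_0\). Hence \(\Gamma_{v_0}=I\), so \(\eta_1,\dots,\eta_N\) are i.i.d.\ \(N(0,1)\) and \(\eta_{v_0}=0\). The canonical metric is \(d(i,j)=\sqrt2\) between distinct leaves and \(d(i,v_0)=1\), so \(\sigma(i)=1\) for every leaf and \(R_\ast=1\).

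\emph{Pointwise functional.} I apply Theorem~\ref{thm:pointwise-gaussian} once, to the ambient field on \(V\) with prior \(\mu_q\) and anchor \(x_0=v_0\). For a leaf \(i\in I\), the ball \(B_d(i,\varepsilon)\) is \(\{i\}\) for \(\varepsilon<1\), contains \(v_0\) (hence has mass at least \(1/2\)) for \(\varepsilon\in[1,\sqrt2)\), and is all of \(V\) for \(\varepsilon\ge\sqrt2\). Moreover \(\mu_q(\{i\})\ge \tfrac14 q(I)/|I|\). The integral over \([0,4]\) is therefore at most
\[
\sqrt{\log\tfrac{4|I|}{q(I)}}+O(1),
\]
and the same bound holds simultaneously for every \(I\ni i\). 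The uniform component \(\tfrac14\mathrm{Unif}\{1,\dots,N\}\) gives the uniform bound \(\Phi_{\mu_q,\ast}\lesssim\sqrt{\log(4N)}\); it is included to keep \(\Phi_{\mu_q,\ast}\) finite and logarithmic in \(N\). I choose \(s_0=R_\ast=1\), so the variance peeling factor is \(1\), and \(r_0=1\). Then
\[
M_{r_0,s_0}=1+\bigl\lceil\log_2^+\Phi_{\mu_q,\ast}\bigr\rceil\lesssim 2+\log\log(N+3).
\]
Substituting into \eqref{eq:absolute-pointwise-gaussian} gives \eqref{eq:star-atlas-envelope}, the additive constants being absorbed into \(C\) because \(\log(4|I|/q(I))\ge\log4\). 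The estimate holds simultaneously over all \(I\) because the envelope is a single pointwise event over \(V\), evaluated afterwards on each \(I\). This bookkeeping is the one place where care is needed: the atlas cost must enter only through \(\Phi\) and not through a union bound over \(\mathcal I\).

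\emph{Local times and the exact law.} Take \(A_H(i;\delta)\) to be the right side of \eqref{eq:star-atlas-envelope}, with \(\delta\) replaced by \(\delta/2\) where Corollary~\ref{cor:target-cover-threshold} requires it. The hypothesis \(\sup_{i\in I}A_H\le\theta\sqrt t\) then holds once \(t\) satisfies \eqref{eq:star-atlas-target-upper} with \(C_\theta\asymp C^2/\theta^2\), and the corollary yields \(\tau_I\le\tau(t)\) with probability at least \(1-\delta\).

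For the exact formula, the walk at \(v_0\) (with \(c_{v_0}=N\)) leaves at rate \(1\), choosing each leaf at rate \(1/N\). Before the root local time reaches \(t\), the excursions to leaf \(i\) form a Poisson process in root local time \(L(v_0)=s/N\) with rate \(1\), and these processes are independent across leaves. Each leaf is therefore visited by \(\tau(t)\) independently with probability \(1-e^{-t}\). This gives both identities in \eqref{eq:star-exact-coupon-rigorous}, and hence the quantiles \(t\approx\log|I|+\log(1/\log(1/(1-\delta)))\) and \(t\approx\log N+O_\delta(1)\), which yields the comparison when \(\log(|I|/q(I))=o(\log N)\).

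I expect the main obstacle to be fixing the time normalisation so that the rate is exactly \(1\), since the normalised local time divides by \(c_{v_0}=N\).
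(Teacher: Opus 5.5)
Your proposal is correct and follows essentially the same route as the paper: identity pinned Laplacian, the singleton mass bound \(\mu_q(\{i\})\ge q(I)/(4|I|)\) in the localized functional with the uniform component controlling \(\Phi_{\mu_q,\ast}\), Theorem~\ref{thm:pointwise-gaussian} with \(r_0=s_0=1\), then Corollary~\ref{cor:target-cover-threshold}, and Poisson thinning of root departures (rate one in normalized root local time) for the exact coupon laws and quantiles. Your explicit ball-by-ball evaluation of \(\Phi_{\mu_q}(i)\) spells out a step the paper leaves implicit. So does your remark that simultaneity over \(\mathcal I\) comes from a single ambient envelope event rather than a union bound over \(\mathcal I\).
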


\begin{proof}
The pinned Laplacian of a star, after removing the root, is the identity matrix, so the pinned GFF on the leaves is an independent standard Gaussian vector.  For a leaf \(i\), \(\sigma(i)=d(i,v_0)=1\), and \(d(i,j)=\sqrt2\) for distinct leaves.  If \(i\in I\), then the selected part of \(\mu_q\) gives
\[
\mu_q(\{i\})\ge \frac{q(I)}{4|I|}.
\]
Since the leaf diameter is an absolute constant,
\[
\Phi_{\mu_q}(i)
\le C\sqrt{\log\frac{4|I|}{q(I)}}
\qquad (i\in I).
\]
The uniform component gives \(\mu_q(\{i\})\ge1/(4N)\) for every leaf, hence \(\Phi_{\mu_q,*}\le C\sqrt{\log(N+1)}\).  With \(r_0=s_0=1\), the peeling multiplicity in Theorem~\ref{thm:pointwise-gaussian} contributes only the displayed \(\log\log N\) term.  This proves \eqref{eq:star-atlas-envelope}.  Corollary~\ref{cor:target-cover-threshold} then gives \eqref{eq:star-atlas-target-upper}, after increasing \(C_\theta\).

It remains to compute the exact cover laws.  During root visits, the walk waits an exponential time of rate one before jumping, and the next leaf is uniform on \(\{1,\ldots,N\}\).  Since \(c_{v_0}=N\), normalized root local time \(t\) corresponds to ordinary root occupation time \(Nt\).  Hence, before \(\tau(t)\), root departures form a Poisson process of mean \(Nt\) with independent uniform leaf marks.  By thinning, the number of excursions to each fixed leaf is Poisson with mean \(t\), independently across leaves.  A leaf has positive local time by \(\tau(t)\) exactly when its count is nonzero, giving \eqref{eq:star-exact-coupon-rigorous}.  Solving \((1-e^{-t})^k\ge1-\delta\) gives
\[
q_k(\delta):=-\log\bigl(1-(1-\delta)^{1/k}\bigr)=\log k+O_\delta(1),
\]
and the stated separation follows.
\end{proof}

The collapsed star has no within-phase geometry.  The decorated version below adds a local chart to each phase.  The dense clique plays the same role as a compressed local chart in DNN or RG: once the phase has been selected, the within-chart resistance is small, so the pointwise envelope pays only a compressed within-chart term.

\begin{theorem}[Decorated phase-star subatlas theorem]\label{thm:decorated-phase-star}
Fix \(N,K\ge1\) and \(\kappa>0\) with \(\kappa(K+1)\ge1\).  Let \(G_{N,K,\kappa}\) have root \(v_0\), phase centers \(c_1,\ldots,c_N\), and local chart vertices \(z_{a,1},\ldots,z_{a,K}\).  The root is connected to each \(c_a\) by conductance one, and
\[
C_a:=\{c_a,z_{a,1},\ldots,z_{a,K}\}
\]
is a complete graph with edge conductance \(\kappa\).  For \(I\in\mathcal I\), set \(H_{I,K}:=\bigcup_{a\in I}C_a\), and define
\begin{equation}\label{eq:decorated-atlas-prior}
\mu_{q,K}:=\frac12\delta_{v_0}
+\frac14\mathrm{Unif}\!\left(\bigcup_{a=1}^N C_a\right)
+\frac14\sum_{I\in\mathcal I}q(I)\,\mathrm{Unif}\!\left(\bigcup_{a\in I}C_a\right).
\end{equation}
Put
\begin{equation}\label{eq:decorated-envelope-scale}
\mathcal E_{I,K}(\delta):=
\sqrt{\log\frac{4|I|}{q(I)}}+
\sqrt{\frac{1}{\kappa(K+1)}\log\frac{4|I|(K+1)}{q(I)}}+
\sqrt{\log\!\left(\frac{2+\log\log((N+3)(K+3))}{\delta}\right)}.
\end{equation}
Then the full-graph pointwise envelope with prior \(\mu_{q,K}\) satisfies, with probability at least \(1-\delta\), simultaneously for all \(I\in\mathcal I\),
\begin{equation}\label{eq:decorated-envelope}
\sup_{x\in H_{I,K}}|\eta_x|\le C\mathcal E_{I,K}(\delta).
\end{equation}
Consequently, for every \(\theta\in(0,1/\sqrt2)\),
\begin{equation}\label{eq:decorated-target-cover}
t\ge C_\theta \mathcal E_{I,K}(\delta)^2
\quad\Longrightarrow\quad
\mathbb P\{\tau_{H_{I,K}}\le\tau(t)\}\ge1-\delta.
\end{equation}
Finally,
\begin{equation}\label{eq:decorated-center-coupon}
\mathbb P\{\{c_1,\ldots,c_N\}\subseteq X[0,\tau(t)]\}=(1-e^{-t})^N,
\end{equation}
so full cover at confidence \(1-\delta\) must pay inverse-root-local-time \(\log N+O_\delta(1)\).  In particular, if \(\mathcal E_{I,K}(\delta)^2=o(\log N)\), the selected target-cover route is asymptotically smaller than the full-cover route.
\end{theorem}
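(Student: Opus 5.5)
The argument runs parallel to the proof of Theorem~\ref{thm:phase-star-separation}, with one extra ingredient: the effective-resistance geometry of the decorated graph. First I compute the canonical metric of the pinned GFF on \(G_{N,K,\kappa}\). The root connects to each clique \(C_a\) only through the unit edge \(v_0c_a\), so the pinned field splits into \(N\) independent blocks. This gives \(\sigma(c_a)^2=R_{\mathrm{eff}}(v_0,c_a)=1\). For any chart vertex, \(\sigma(z_{a,k})^2=1+R_{\mathrm{eff}}^{C_a}(c_a,z_{a,k})=1+2/(\kappa(K+1))\le 3\), using that the effective resistance between two vertices of the complete graph \(K_{K+1}\) with conductance \(\kappa\) equals \(2/(\kappa(K+1))\). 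Within a chart, \(d(x,y)^2=2/(\kappa(K+1))=:\rho^2\le 2\). Across different charts the distance is at least \(\sqrt2\). All variances therefore lie in \([1,3]\) and \(R_\ast\le\sqrt3\).

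Next I bound \(\Phi_{\mu_{q,K}}(x)\) for \(x\in H_{I,K}\), taking the integral up to \(4\sigma(x)\le 4\sqrt3\). I split it at \(\rho\).
\begin{itemize}
\item For \(\varepsilon<\rho\), the ball contains at least \(\{x\}\). The selected component of \eqref{eq:decorated-atlas-prior} gives \(\mu_{q,K}(\{x\})\ge q(I)/(4|I|(K+1))\). This contributes \(\rho\sqrt{\log(4|I|(K+1)/q(I))}\), which is the second term of \eqref{eq:decorated-envelope-scale}, since \(\rho\asymp (\kappa(K+1))^{-1/2}\).
\item For \(\rho\le\varepsilon<\sqrt2\), the ball contains the whole clique \(C_a\). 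Its mass is at least \(q(I)/(4|I|)\), so this range contributes \(C\sqrt{\log(4|I|/q(I))}\).
\item For \(\varepsilon\ge\sqrt2\), I do not expect the ball to contain \(v_0\) at that radius in general. Instead I note that the integrand stays at most \(\sqrt{\log(4|I|/q(I))}\) up to \(4\sqrt3\), by monotonicity of ball masses. This range is absorbed into the first term.
\end{itemize}

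For the global quantities, the uniform component gives every vertex mass at least \(1/(4N(K+1))\). Hence \(\Phi_{\mu,\ast}\le C\sqrt{\log((N+1)(K+1))}\). Taking \(r_0=s_0=1\) in \eqref{eq:peeling-multiplicity}, \(M_{r_0,s_0}\lesssim 1+\log\log((N+3)(K+3))\). Theorem~\ref{thm:pointwise-gaussian}, in its absolute form \eqref{eq:absolute-pointwise-gaussian}, applied once to the ambient field with the single prior \(\mu_{q,K}\), then gives a single event of probability at least \(1-\delta\). On this event \(|\eta_x|\le C[\max\{\Phi(x),1\}+\sqrt3\sqrt{\log(eM/\delta)}]\) for all \(x\). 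Evaluating this on every \(H_{I,K}\) yields \eqref{eq:decorated-envelope} simultaneously for all \(I\in\mathcal I\), with no union bound over \(I\), because the prior was fixed in advance.

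Implication \eqref{eq:decorated-target-cover} then follows from Corollary~\ref{cor:target-cover-threshold}. I take \(A_H(x;\delta/2)\) to be the envelope at level \(\delta/2\), which changes only constants, and choose \(t\ge C_\theta\mathcal E_{I,K}(\delta)^2\) so that \(\sup A\le\theta\sqrt t\).

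For \eqref{eq:decorated-center-coupon} I reuse the Poisson thinning argument from the star. The root has \(c_{v_0}=N\), so root local time \(t\) means root occupation time \(Nt\). Jumps from the root therefore form a Poisson process of mean \(Nt\) with uniform marks on the centers \(\{c_a\}\). Since \(c_a\) can only be reached from \(v_0\) (the cliques are otherwise disconnected), \(c_a\) is visited by \(\tau(t)\) exactly when its Poisson\((t)\) count is positive, and these counts are independent across \(a\). This gives \((1-e^{-t})^N\). The quantile computation \(\log N+O_\delta(1)\) is as in Theorem~\ref{thm:phase-star-separation}, and full cover implies center cover.

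The main obstacle is the exact within-clique resistance computation, together with verifying that cross-block covariances vanish. Both steps rest on the cut-vertex decomposition at \(v_0\) of the pinned Green function; I would state that decomposition explicitly before the computation.
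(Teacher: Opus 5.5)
Your proposal is correct and takes essentially the same route as the paper. You bound $\sigma$ through effective resistances, split the localized integral at $\sqrt{2/(\kappa(K+1))}$ using the singleton and whole-clique masses of the selected component, control $\Phi_{\mu_{q,K},\ast}$ through the uniform component with $r_0=s_0=1$, and then invoke Corollary~\ref{cor:target-cover-threshold} and the Poisson-thinning count of root departures. The only difference is cosmetic: you compute the resistances exactly via the cut vertices $v_0$ and $c_a$, whereas the paper uses only the upper bounds $R_{\rm eff}(x,y)\le 2/(\kappa(K+1))$ within a clique (Rayleigh monotonicity) and $R_{\rm eff}(v_0,x)\le 1+2/(\kappa(K+1))$ (triangle inequality), which is all the upper envelope requires.
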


\paragraph{How to read the decorated phase-star.}
The centers \(c_a\) are global phase labels, while each clique \(C_a\) is a local chart.  The first term in \(\mathcal E_{I,K}\) is the target-atlas code.  The second term is the compressed within-chart cost; it is small when \(\kappa(K+1)\) is large.  This is the graph analogue of compressed feature directions in DNN or irrelevant directions in finite-cutoff RG.  A full-cover proof ignores the selected atlas and pays the coupon-collector cost of all \(N\) phase centers.

\begin{proof}
The resistance estimates explain the scale \(\mathcal E_{I,K}\).  In a complete graph with \(K+1\) vertices and edge conductance \(\kappa\), the resistance between two vertices is \(2/[\kappa(K+1)]\).  Adding the root can only reduce within-clique resistance, hence
\[
R_{\rm eff}(x,y)\le \frac{2}{\kappa(K+1)},
\qquad x,y\in C_a.
\]
The only connection from the root to the phase \(C_a\) is the edge \((v_0,c_a)\), so \(R_{\rm eff}(v_0,c_a)=1\), and the triangle inequality gives
\[
R_{\rm eff}(v_0,x)\le 1+\frac{2}{\kappa(K+1)},
\qquad x\in C_a.
\]
Thus \(\sigma(x)\) is bounded by an absolute constant.

Let
\[
\alpha_{K,\kappa}:=\sqrt{\frac{2}{\kappa(K+1)}}.
\]
For \(x\in C_a\) with \(a\in I\), the ball \(B_d(x,\alpha_{K,\kappa})\) contains all of \(C_a\), so \(\mu_{q,K}(B_d(x,\alpha_{K,\kappa}))\ge q(I)/(4|I|)\).  At smaller radii the singleton mass is at least \(q(I)/[4|I|(K+1)]\).  Splitting the localized integral defining \(\Phi_{\mu_{q,K}}(x)\) at \(\alpha_{K,\kappa}\) gives
\[
\Phi_{\mu_{q,K}}(x)
\le
C\sqrt{\log\frac{4|I|}{q(I)}}
+C\alpha_{K,\kappa}\sqrt{\log\frac{4|I|(K+1)}{q(I)}}.
\]
The uniform component of \(\mu_{q,K}\) gives singleton mass at least \(1/[4N(K+1)]\), and therefore \(\Phi_{\mu_{q,K},*}\le C\sqrt{\log((N+1)(K+1))}\).  Theorem~\ref{thm:pointwise-gaussian}, with \(r_0=s_0=1\), yields \eqref{eq:decorated-envelope}.  Corollary~\ref{cor:target-cover-threshold} then gives \eqref{eq:decorated-target-cover}.

For the full-cover obstruction, observe that every departure from the root chooses one of the phase centers uniformly.  The same Poisson-thinning computation as in Theorem~\ref{thm:phase-star-separation} gives \eqref{eq:decorated-center-coupon}.  Since full cover implies that every phase center has been visited,
\[
\mathbb P\{\tau_{\rm cov}\le\tau(t)\}
\le (1-e^{-t})^N.
\]
The \((1-\delta)\)-quantile of this coupon-collector event is \(\log N+O_\delta(1)\), completing the separation.
\end{proof}

\paragraph{Relation to classical cover-time theory.}
Let \(H(u,v)\) be the expected hitting time from \(u\) to \(v\), and define the commute-time metric
\[
\kappa(u,v):=H(u,v)+H(v,u).
\]
Let \(\mathcal C:=\sum_{x\in V}c_x\) be the total conductance.  The commute-time identity gives
\[
\kappa(u,v)=\mathcal C\,R_{\rm eff}(u,v).
\]
For an unweighted graph, \(\mathcal C=2|E|\).  If \(\eta=\{\eta_v\}_{v\in V}\) denotes the pinned GFF, then \(\mathbb E(\eta_u-\eta_v)^2=R_{\rm eff}(u,v)\).

\begin{proposition}[Ding--Lee--Peres \citep{ding-lee-peres-cover-2012}]\label{prop:tcov-gamma2}
For any connected conductance graph \(G=(V,E,c)\) and any fixed \(0<\delta<1\),
\begin{equation}\label{eq:tcov-gamma2}
t_{\rm cov}(G)\asymp \gamma_2\!\bigl(V,\sqrt{\kappa}\bigr)^2
=\mathcal C\cdot\gamma_2\!\bigl(V,\sqrt{R_{\rm eff}}\bigr)^2
\asymp_\delta t_{\rm bl}(G,\delta).
\end{equation}
Equivalently,
\begin{equation}\label{eq:tcov-gff}
t_{\rm cov}(G)\asymp \mathcal C\cdot\left(\mathbb E\max_{v\in V}\eta_v\right)^2.
\end{equation}
\end{proposition}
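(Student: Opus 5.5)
This is the Ding--Lee--Peres theorem, which the paper cites rather than reproves. The plan is to assemble it from three classical inputs: Matthews-type chaining for the cover-time upper bound, the Ray--Knight isomorphism (Proposition~\ref{prop:ray-knight-graph}) for the lower bound, and the majorizing-measure theorem to pass between $\gamma_2$ and $\mathbb E\max\eta$.

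\textbf{Step 1: the two sides of the equivalence.} First I reduce everything to the Gaussian side. By the commute-time identity $\kappa=\mathcal C R_{\rm eff}$ we have $\gamma_2(V,\sqrt\kappa)^2=\mathcal C\,\gamma_2(V,\sqrt{R_{\rm eff}})^2$. Since $\sqrt{R_{\rm eff}}$ is the canonical metric of the pinned GFF, the Fernique--Talagrand theorem (in the anchored form of Proposition~\ref{prop:global-sharpness}) gives $\gamma_2(V,\sqrt{R_{\rm eff}})\asymp\mathbb E\max_v\eta_v$. The radius term is absorbed because $\eta_{v_0}=0$. So \eqref{eq:tcov-gamma2} and \eqref{eq:tcov-gff} are equivalent, and it suffices to show $t_{\rm cov}\asymp\mathcal C(\mathbb E\max\eta)^2$ together with the blanket comparison.

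\textbf{Step 2: upper bound.} I use the Ray--Knight identity at the inverse local time $\tau(t)$ with $t=K(\mathbb E\max\eta)^2$. Borell--TIS concentration of $\max|\eta'|$ shows that $\frac12(\eta'_x+\sqrt{2t})^2\ge t/4$ for every $x$, with probability bounded below. The same bound controls $\frac12\eta_x^2$ from above. Together these give uniform positive, in fact comparable, local times at all vertices. This proves cover, and blanket at parameter $\delta$, by time $\tau(t)$ with constant probability; the upper side of Corollary~\ref{cor:target-cover-threshold} with $H=V$ is exactly this mechanism. The real time $\tau(t)$ has expectation $\mathcal C t$, since each unit of normalized root local time costs on average total conductance $\mathcal C$ of real time. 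A restart/geometric-trials argument then converts the constant-probability statement into the expectation bound $t_{\rm bl}(G,\delta)\lesssim_\delta\mathcal C(\mathbb E\max\eta)^2$. Since $t_{\rm cov}\le t_{\rm bl}$, this also gives the cover-time upper bound.

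\textbf{Step 3: lower bound (main obstacle).} I need to show $t_{\rm cov}\gtrsim\mathcal C(\mathbb E\max\eta)^2$. Ray--Knight shows that if every vertex is covered by $\tau(t)$ then $\min_x(\eta'_x+\sqrt{2t})^2$ is not forced to vanish, but that alone does not give the lower bound. The genuine lower bound in Ding--Lee--Peres requires a tree-like packing argument: a majorizing-measure lower bound via Talagrand's separated tree, together with a percolation-type argument showing that on such a tree some vertex $x$ with $\eta'_x\approx-\sqrt{2t}$ exists for $t\le c(\mathbb E\max\eta)^2$. Equivalently, the sign-cluster of $\eta'+\sqrt{2t}$ fails to reach all vertices. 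I would try first to import this step directly from \citet{ding-lee-peres-cover-2012}. Alternatively I would invoke Zhai's or Lupu's later refinements, which identify uncovered vertices with zeros of the shifted field via the generalized isomorphism. Finally, $\tau(t)$ concentrates around $\mathcal C t$ on this scale, which turns the statement into a lower bound on $t_{\rm cov}$.
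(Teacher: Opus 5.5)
The paper gives no proof of this proposition. It is quoted from Ding--Lee--Peres, and the only related argument in the paper is the Ray--Knight envelope of Corollary~\ref{cor:target-cover-threshold}. Your Step~1 is correct: the commute-time identity, homogeneity of $\gamma_2$ in the metric, and the majorizing-measure theorem for the pinned field, whose canonical metric is $\sqrt{R_{\rm eff}}$. Your Step~2 is the standard isomorphism-plus-concentration upper bound. Step~3, however, imports the lower bound, so your proposal, like the paper, is a citation for the hard direction rather than a proof. That is acceptable, but three points in the sketch are wrong as written.

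\textbf{Blanket time in Step~2.} The restart argument is valid for cover time: covering is monotone, and the blocks $[\tau((k-1)t),\tau(kt)]$ are i.i.d.\ by the strong Markov property at $v_0$. It is not valid for blanket time. The blanket condition at the end of a block involves cumulative local times, so an earlier unbalanced block can spoil it, and independent trials give no geometric decay. Instead, apply Proposition~\ref{prop:ray-knight-graph} directly at $\tau(t_k)$, with $t_k:=2^kK(\mathbb E\max\eta)^2$. Since $\sigma_{\max}^2=\max_xR_{\rm eff}(x,v_0)\le 2\pi(\mathbb E\max\eta)^2$, Borell--TIS bounds the probability that the blanket condition fails at $\tau(t_k)$ by $2e^{-c_\delta 2^kK}$ once $K\ge K_0(\delta)$. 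Then
\[
\tau_{\rm bl}(\delta)\le \tau(t_0)+\sum_{k\ge0}\bigl(\tau(t_{k+1})-\tau(t_k)\bigr)\mathbf 1\{\tau_{\rm bl}(\delta)>\tau(t_k)\},
\]
where each increment is independent of $\mathcal F_{\tau(t_k)}$ and has mean $\mathcal C(t_{k+1}-t_k)$. Summing gives $\mathbb E_{v_0}\tau_{\rm bl}(\delta)\lesssim_\delta \mathcal C(\mathbb E\max\eta)^2$.

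\textbf{Starting vertex in Step~2.} $t_{\rm cov}$ and $t_{\rm bl}$ maximize over the starting vertex $w$, whereas Ray--Knight starts the walk at the root. For cover time you can simply add $H(w,v_0)\le\mathcal C R_{\rm eff}(w,v_0)$. For blanket time, the local times accumulated before hitting $v_0$ contaminate the ratios. The clean fix is to root the field at $w$. This costs nothing, because the field pinned at $w$ has the law of $(\eta_v-\eta_w)_v$, so $\mathbb E\max_v(\eta_v-\eta_w)=\mathbb E\max_v\eta_v$.

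\textbf{Concentration in Step~3.} The closing claim that $\tau(t)$ concentrates around $\mathcal Ct$ ``on this scale'' is false in general. Proposition~\ref{prop:ray-knight-graph} gives $\mathrm{Cov}(L_{\tau(t)}(x),L_{\tau(t)}(y))=2t\,\Gamma_{v_0}(x,y)$. Hence $\mathrm{Var}\,\tau(t)=2t\sum_x c_x\mathbb E_xT_{v_0}\le 2t\,\mathcal C^2\max_xR_{\rm eff}(x,v_0)$, which is small relative to $(\mathcal Ct)^2$ only when $t\gg \max_xR_{\rm eff}(x,v_0)$. On a path rooted at an endpoint, at $t=c(\mathbb E\max\eta)^2$, the relative fluctuation is of order $c^{-1/2}$. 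You need a case split. If $(\mathbb E\max\eta)^2\le C\max_xR_{\rm eff}(x,v_0)$, the lower bound already follows from $t_{\rm cov}\ge\max_{u,v}H(u,v)\ge\frac12\mathcal C\max_{u,v}R_{\rm eff}(u,v)$. Otherwise Chebyshev applies.

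\textbf{The Lupu--Zhai route.} If you take this route, no tree or percolation argument is needed. On the cable graph, the range of the walk up to $\tau(t)$ lies in the positive sign cluster of $v_0$ for $\tilde\eta'+\sqrt{2t}$. Hence $\mathbb P(\tau_{\rm cov}>\tau(t))\ge\mathbb P(\max_x\eta_x>\sqrt{2t})$. For $t\le(\mathbb E\max\eta)^2/8$ this is at least $1/(4(1+2\pi))$, by Paley--Zygmund and $\mathrm{Var}(\max\eta)\le\sigma_{\max}^2$. The coupling itself, however, remains an imported theorem.
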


The point of the phase-star examples is not to reprove the Ding--Lee--Peres theorem.  That theorem answers the global question: how long does it take to cover every vertex of an arbitrary graph, and how is that time characterized by majorizing measures?  Here we ask a different, more local question: if a target subatlas is selected, can one control local time on that target before proving full cover?  Theorems~\ref{thm:phase-star-separation} and~\ref{thm:decorated-phase-star} show that the answer can be yes, and that the difference can be strict.

If the target set were fixed in advance and one were allowed to redo the Gaussian analysis only on that set, classical generic chaining on the restricted field would recover the target scale.  The pointwise formulation is useful for a different reason: one ambient prior and one full-field envelope are chosen first, and then the bound is evaluated on whichever target subatlas is selected.  This keeps the dependence on \(\log(|I|/q(I))\) and, in the decorated star, on the compressed within-chart term \((\kappa(K+1))^{-1}\log(|I|(K+1)/q(I))\).  A route through full cover collapses this information and pays the coupon-collector scale \(\log N\).

\section{Technical variants and Sudakov-type consequences}\label{app:sudakov}
In this section, we derive a Sudakov-type comparison from the Bayesian algorithmic lower bound for the nearest-neighbor decoder via a supremum relaxation. Our aim is conceptual: to show that the global Sudakov scale may be viewed as a coarse relaxation of a more localized algorithmic validation. We use this route only to clarify the soundness and intuition of the approach. Recent concurrent work \citep{zadik2026bayesian} shows that a comparison between nearest-neighbor and Bayes-optimal risks yields a succinct proof of the majorizing-measure lower bound. Together with the standard upper-bound arguments, this gives a concise  perspective on both sides of Talagrand's majorizing-measure theorem.

For a prior \(\pi\) and radius \(\Delta\), write
\[
q_\pi(\Delta):=\sup_{a\in T}\pi(B_d(a,\Delta)),
\qquad
L_\pi(\Delta):=\log\frac1{q_\pi(\Delta)},
\qquad
\mathsf V_\pi:=\iint d(s,t)^2\,\pi(ds)\pi(dt).
\]

\begin{corollary}[Sudakov-type comparison through supremum relaxation]\label{coro:subdakov}
In the Gaussian location setup of Theorem~\ref{thm:nn-comparison}, take the ordinary nearest-neighbor decoder and the centered reference law.  If \(L_\pi(\Delta)\) is above a universal constant, then choosing
\[
\tau^2\asymp \frac{\mathsf V_\pi}{L_\pi(\Delta)}
\]
yields
\begin{equation}\label{eq:variance-normalized-sudakov}
\mathbb E\sup_{s,u\in T}(X_s-X_u)
\gtrsim
\Delta^2\sqrt{\frac{L_\pi(\Delta)}{\mathsf V_\pi}}.
\end{equation}
In particular, if \(\mathsf V_\pi\le C_0\Delta^2\), then
\begin{equation}\label{eq:localized-sudakov-prior}
\mathbb E\sup_{s,u\in T}(X_s-X_u)
\gtrsim_{C_0}
\Delta\sqrt{L_\pi(\Delta)}.
\end{equation}
Consequently, with
\begin{equation}\label{eq:localized-frac-cover}
N_{\rm spread}^{\rm loc}(T,d,\Delta;C_0)
:=
\sup_{\pi:\,\mathsf V_\pi\le C_0\Delta^2}
\frac{1}{\sup_{a\in T}\pi(B_d(a,\Delta))},
\end{equation}
one has
\begin{equation}\label{eq:localized-sudakov-frac}
\mathbb E\sup_{s,u\in T}(X_s-X_u)
\gtrsim_{C_0}
\Delta\sqrt{\log N_{\rm spread}^{\rm loc}(T,d,\Delta;C_0)},
\end{equation}
whenever the logarithm is above the universal threshold required by the information condition.
\end{corollary}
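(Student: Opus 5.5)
The plan is to apply Theorem~\ref{thm:nn-comparison} to the ordinary nearest-neighbor decoder, with \(\Omega\equiv0\), \(\varepsilon_{\rm cmp}=0\), and the centered reference \(m=\bar v_\pi\). Two inputs are needed. First, the exact ghost mass is bounded by the worst-case small-ball mass: for every \(y\), \(\pi(B_d(\widehat\Theta(y),\Delta))\le q_\pi(\Delta)\), so \(\rho_{\Delta,Q_{\tau,\bar v_\pi}}(\widehat\Theta)\le q_\pi(\Delta)\). Hence \(\log(1/\rho)\ge L_\pi(\Delta)\). This is precisely the supremum relaxation: it discards the algorithmic geometry but costs nothing in validity. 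Second, the information radius is \(\mathcal I_{\pi,\bar v_\pi}=\mathsf V_\pi/(4\tau^2)\).

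Next I would fix \(c=1/2\) and choose \(\tau^2:=\mathsf V_\pi/L_\pi(\Delta)\). The information condition \eqref{eq:bayesian-nn-condition} then reads
\[
\tfrac14L_\pi(\Delta)+\log2\le\tfrac12L_\pi(\Delta).
\]
This holds as soon as \(L_\pi(\Delta)\ge4\log2\), which is the universal threshold in the statement. If \(\mathsf V_\pi=0\), then \(\pi\) is a point mass and \(L_\pi=0\), so this case is excluded.

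Theorem~\ref{thm:nn-comparison} then gives
\[
\mathbb E[X_{\widehat\Theta}-X_\Theta]\ge c^2\Delta^2/(2\tau).
\]
I would bound the left side pathwise by \(\sup_{s,u\in T}(X_s-X_u)\). This supremum does not depend on \(\Theta\) and only uses \(Z\), and it is nonnegative. Its expectation is the same for every \(\tau\), because \(X_t=\langle Z,v_t\rangle\) does not involve \(\tau\). That observation is what lets us tune \(\tau\) freely.

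Substituting \(\tau=\sqrt{\mathsf V_\pi/L_\pi(\Delta)}\) yields
\[
\mathbb E\sup_{s,u\in T}(X_s-X_u)\ge\frac{\Delta^2}{8}\sqrt{\frac{L_\pi(\Delta)}{\mathsf V_\pi}},
\]
which is \eqref{eq:variance-normalized-sudakov}. Under \(\mathsf V_\pi\le C_0\Delta^2\), this becomes \(\gtrsim\Delta\sqrt{L_\pi(\Delta)/C_0}\), which is \eqref{eq:localized-sudakov-prior}.

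For \eqref{eq:localized-sudakov-frac}, I would take the supremum over admissible priors. Since \(L_\pi(\Delta)=\log(1/q_\pi(\Delta))\), choosing \(\pi\) nearly attaining \(N^{\rm loc}_{\rm spread}\) gives the claim whenever its logarithm exceeds the threshold. Because \(T\) is finite, the supremum is approached and the constant is absorbed.

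The only delicate point is verifying that the information condition and the threshold hold with a universal constant; the rest is bookkeeping. One also needs a finite-dimensional \(\mathcal H\) realizing \(d\), but that is part of the standing setup of Theorem~\ref{thm:nn-comparison}, so no approximation step is needed.
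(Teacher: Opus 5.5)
Your proposal is correct and follows the paper's proof step for step: you bound the nearest-neighbor ghost mass by $q_\pi(\Delta)$, use the centered information radius $\mathsf V_\pi/(4\tau^2)$, tune $\tau^2\asymp\mathsf V_\pi/L_\pi(\Delta)$ so that \eqref{eq:bayesian-nn-condition} holds, apply \eqref{eq:nn-to-gaussian-range-main}, and relax pathwise to the $\tau$-independent supremum. Your explicit choice of $c=1/2$, $\tau^2=\mathsf V_\pi/L_\pi(\Delta)$, the threshold $L_\pi(\Delta)\ge 4\log 2$, and the resulting constant $1/8$ simply make concrete the paper's ``sufficiently large constant multiple'' and ``universal threshold''.
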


\begin{proof}
For the centered reference choice \(m=\bar v_\pi\), Theorem~\ref{thm:nn-comparison} gives information radius
\[
\mathcal I_{\pi,m}=\frac{\mathsf V_\pi}{4\tau^2}.
\]
For nearest neighbor, the exact ghost mass is bounded by the class-wide small-ball mass:
\[
\rho_{\Delta,Q_{\tau,\bar v_\pi}}(\widehat\Theta_{\rm nn})
\le q_\pi(\Delta).
\]
Thus, if \(L_\pi(\Delta)\) is large enough, taking \(\tau^2\) to be a sufficiently large constant multiple of \(\mathsf V_\pi/L_\pi(\Delta)\) makes the information condition \eqref{eq:bayesian-nn-condition} hold with an absolute constant \(c\in(0,1)\).  The comparison lower bound \eqref{eq:nn-to-gaussian-range-main} then gives
\[
\mathbb E\bigl(X_{\widehat\Theta_{\rm nn}}-X_\Theta\bigr)
\gtrsim \frac{\Delta^2}{\tau}.
\]
Since \(X_{\widehat\Theta_{\rm nn}}-X_\Theta\le\sup_{s,u\in T}(X_s-X_u)\) pointwise, substitution of the chosen \(\tau\) gives \eqref{eq:variance-normalized-sudakov}.  The localized estimate \eqref{eq:localized-sudakov-prior} follows from \(\mathsf V_\pi\le C_0\Delta^2\), and optimizing over such priors gives \eqref{eq:localized-sudakov-frac}.
\end{proof}

The extra hypothesis \(\mathsf V_\pi\lesssim\Delta^2\) is not cosmetic.  It says that the hard prior used to create small balls is also spread at the same radius as the test.  Without this localization, the Gaussian channel must use a larger noise level to keep the information small, and the nearest-neighbor route loses exactly the factor visible in \eqref{eq:variance-normalized-sudakov}.

\begin{corollary}[Localized packing form]\label{cor:localized-packing-sudakov}
Let \(S\subset T\) be \(2\Delta\)-separated and contained in \(B_d(t_0,C_0\Delta)\) for some \(t_0\in T\).  If \(|S|=M\) is larger than a universal constant, then
\[
\mathbb E\sup_{s,u\in T}(X_s-X_u)
\gtrsim_{C_0}
\Delta\sqrt{\log M}.
\]
\end{corollary}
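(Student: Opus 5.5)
The plan is to deduce Corollary~\ref{cor:localized-packing-sudakov} directly from the localized prior form \eqref{eq:localized-sudakov-prior} of Corollary~\ref{coro:subdakov}. The natural prior is \(\pi=\mathrm{Unif}(S)\). We need two inputs: a bound on the variance term \(\mathsf V_\pi\), and a bound on the worst-case small-ball mass \(q_\pi(\Delta)\). The conclusion then follows by substitution.

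\emph{Variance.} Since \(S\subset B_d(t_0,C_0\Delta)\), the triangle inequality gives \(d(s,t)\le 2C_0\Delta\) for all \(s,t\in S\). Hence \(\mathsf V_\pi\le 4C_0^2\Delta^2\). This is the localization hypothesis of \eqref{eq:localized-sudakov-prior}, with \(C_0\) replaced by \(4C_0^2\).

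\emph{Small-ball mass.} We must bound \(\sup_{a\in T}\pi(B_d(a,\Delta))\), where the center \(a\) ranges over all of \(T\) and not only over \(S\). Two distinct points of \(S\) lie at distance at least \(2\Delta\). If both were in \(B_d(a,\Delta)\), their distance would be at most \(2\Delta\), so equality would have to hold. To rule out this boundary case, I would first replace \(\Delta\) by \(\Delta'=\Delta/2\), or equivalently treat \(S\) as \(2\Delta'\)-separated with room to spare. Then each closed \(\Delta'\)-ball contains at most one point of \(S\), so \(q_\pi(\Delta')\le 1/M\) and \(L_\pi(\Delta')\ge\log M\). The containment still reads \(S\subset B_d(t_0,2C_0\Delta')\), so \(\mathsf V_\pi\le 16C_0^2\Delta'^2\), and losing the factor \(2\) in the radius only changes constants.

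\emph{Conclusion.} Because \(M\) exceeds a universal constant, \(L_\pi(\Delta')\) clears the threshold needed for the information condition \eqref{eq:bayesian-nn-condition}. Applying \eqref{eq:localized-sudakov-prior} at radius \(\Delta'\) yields
\[
\mathbb E\sup_{s,u\in T}(X_s-X_u)\gtrsim_{C_0}\Delta'\sqrt{\log M}\asymp\Delta\sqrt{\log M}.
\]
The process \(X\) is indexed by \(T\), and \(\pi\) is supported on \(S\subseteq T\), so nothing is lost by working in \(T\).

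I expect the only genuine care point to be the closed-ball boundary issue in the small-ball step, which the halving of the radius handles. One should also check that the dependence of the constant in \eqref{eq:localized-sudakov-prior} on \(C_0\) stays polynomial, or at least finite, under this rescaling.
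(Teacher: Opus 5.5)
Your proposal is correct and follows the paper's proof: take the uniform prior on \(S\), bound \(\mathsf V_\pi\le 4C_0^2\Delta^2\) by the diameter of \(S\), get \(L_\pi\ge\log M\) from separation, and apply Corollary~\ref{coro:subdakov}. Halving the radius is a sound precaution the paper skips, since with closed balls two points of \(S\) at distance exactly \(2\Delta\) can share one \(\Delta\)-ball; alternatively, \(\rho_{\Delta,Q}\) is defined through the strict event \(d(t,\widehat t(Y))<\Delta\), so only open \(\Delta\)-balls enter the data-processing step and no halving is needed.
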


\begin{proof}
Take \(\pi\) uniform on \(S\).  Since \(S\) is \(2\Delta\)-separated, every \(\Delta\)-ball contains at most one point of \(S\), so
\[
q_\pi(\Delta)\le M^{-1},
\qquad
L_\pi(\Delta)\ge\log M.
\]
The localization assumption gives
\[
\mathsf V_\pi
=\iint d(s,t)^2\,\pi(ds)\pi(dt)
\le 4C_0^2\Delta^2.
\]
Corollary~\ref{coro:subdakov} then yields the claim.
\end{proof}

For comparison with classical covering notation, we record the finite-set fractional-covering duality used in \citet{chen-foster-han-qian-rakhlin-xu-2024}.  Recall that fractional and classical covering numbers are equivalent up to an absolute-constant rescaling of the covering radius \citep{block-dagan-rakhlin-2021,chen-foster-han-qian-rakhlin-xu-2024}.

\begin{lemma}[Fractional-covering duality]\label{lem:fractional-cover-duality}
For a finite metric space \((T,d)\) and any \(\Delta>0\),
\begin{equation}\label{eq:fractional-cover-duality}
N_{\rm frac}(T,d,\Delta)
:=
\inf_{\mu\in\Delta(T)}\sup_{x\in T}\frac{1}{\mu(B_d(x,\Delta))}
=
\sup_{\pi\in\Delta(T)}\frac{1}{\sup_{a\in T}\pi(B_d(a,\Delta))}.
\end{equation}
\end{lemma}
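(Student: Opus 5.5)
The identity in \eqref{eq:fractional-cover-duality} is an instance of finite linear-programming duality, equivalently von Neumann's minimax theorem for a zero-sum game with payoff matrix \(A_{x,a}:=\mathbf 1\{d(x,a)\le\Delta\}\) on \(T\times T\). Because \(d\) is symmetric, \(x\in B_d(a,\Delta)\) holds exactly when \(a\in B_d(x,\Delta)\), so \(A\) is symmetric. In this notation \(\mu(B_d(x,\Delta))=(A\mu)_x\) and \(\pi(B_d(a,\Delta))=(A\pi)_a\). The claim is then
\[
\sup_{\mu\in\Delta(T)}\min_{x\in T}(A\mu)_x=\inf_{\pi\in\Delta(T)}\max_{a\in T}(A\pi)_a .
\]
Taking reciprocals gives \eqref{eq:fractional-cover-duality}, because \(u\mapsto1/u\) is decreasing. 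Both sides are strictly positive, since \(A_{xx}=1\) forces \(\min_x(A\mu)_x>0\) whenever \(\mu\) has full support.

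The plan has three steps. First I prove the easy inequality by weak duality. For any \(\mu,\pi\),
\[
\min_x(A\mu)_x\le\sum_x\pi(x)(A\mu)_x=\sum_a\mu(a)(A\pi)_a\le\max_a(A\pi)_a .
\]
This is bilinearity plus symmetry of \(A\), and it yields \(\inf_\mu\sup_x 1/\mu(B_d(x,\Delta))\ge\sup_\pi 1/\sup_a\pi(B_d(a,\Delta))\).

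Second, I prove the reverse inequality with von Neumann's minimax theorem applied to the bilinear form \(\pi^\top A\mu\) on the product of simplices. It gives
\[
\max_\mu\min_\pi\pi^\top A\mu=\min_\pi\max_\mu\pi^\top A\mu .
\]
Linearity reduces the inner optimizations to vertices, so \(\min_\pi\pi^\top A\mu=\min_x(A\mu)_x\) and \(\max_\mu\pi^\top A\mu=\max_a(A^\top\pi)_a=\max_a(A\pi)_a\). Equivalently, this is LP strong duality for the fractional covering program \(\min\{\mathbf 1^\top w: Aw\ge\mathbf 1,\ w\ge0\}\) and the fractional packing program \(\max\{\mathbf 1^\top y: A y\le\mathbf 1,\ y\ge0\}\), after normalizing \(\mu=w/\mathbf 1^\top w\) and \(\pi=y/\mathbf 1^\top y\).

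Third, I check that the infima and suprema are attained, so replacing max by sup and min by inf changes nothing and the reciprocals are finite. Both objectives are continuous functions on compact simplices, so this is immediate.

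The main obstacle is bookkeeping rather than substance. The roles of the center and the point must be swapped correctly, and this works only because \(A\) is symmetric, which in turn relies on the closed balls \(B_d(\cdot,\Delta)\) being defined by the symmetric condition \(d\le\Delta\). I also need to make sure the normalization in the LP form is valid, which holds because the optimal values are positive. Beyond this, I would simply cite the minimax theorem.
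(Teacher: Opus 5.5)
Your proposal is correct and follows the same route as the paper: you apply von Neumann's minimax theorem to the bilinear form $\sum_{x,a}\pi(x)\mu(a)\mathbf 1\{d(x,a)\le\Delta\}$ on the product of simplices and reduce the inner optimizations to vertices. Your explicit use of the symmetry of the incidence matrix is what justifies the paper's final step $\sup_\mu\sum_{x,a}\pi(x)\mu(a)\mathbf 1\{a\in B_d(x,\Delta)\}=\sup_a\pi(B_d(a,\Delta))$, which the paper leaves implicit.
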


\begin{proof}
Let
\[
r_*:=\sup_{\mu\in\Delta(T)}\inf_{x\in T}\mu(B_d(x,\Delta)).
\]
Then \(N_{\rm frac}(T,d,\Delta)=1/r_*\).  Since \(T\) is finite, von Neumann's minimax theorem gives
\[
\begin{aligned}
r_*
&=\sup_{\mu\in\Delta(T)}\inf_{x\in T}
  \sum_{a\in T}\mu(a){\bf 1}\{a\in B_d(x,\Delta)\} \\
&=\inf_{\pi\in\Delta(T)}\sup_{\mu\in\Delta(T)}
  \sum_{x,a\in T}\pi(x)\mu(a){\bf 1}\{a\in B_d(x,\Delta)\} \\
&=\inf_{\pi\in\Delta(T)}\sup_{a\in T}\pi(B_d(a,\Delta)).
\end{aligned}
\]
Taking reciprocals proves \eqref{eq:fractional-cover-duality}.
\end{proof}

The equality \eqref{eq:fractional-cover-duality} optimizes the small-ball term, but it does not remove \(\mathsf V_\pi\) from \eqref{eq:variance-normalized-sudakov}.  That factor comes from the information radius of the Gaussian location channel.  Hence there is no contradiction between fractional-covering lower bounds and the localization requirement above: fractional-covering duality finds a hard prior for the entropy obstruction, while the nearest-neighbor Gaussian comparison also needs that hard prior to have second moment of order \(\Delta^2\).

The unrestricted classical Sudakov minoration is a separate theorem.  Standard proofs use Gaussian comparison and convex-geometric duality; see, for example, the expositions of \citet{pollard-sudakov-notes} and \citet[Chapter~3]{ledoux-talagrand-1991}. The lower half of the Fernique--Talagrand majorizing-measure theorem is stronger and multiscale \citep{ledoux-talagrand-1991,talagrand2005generic,vanhandel2018chainingii,zadik2026bayesian}.  The modest message here is that the Bayesian algorithmic lower envelope is a one-scale, decision-theoretic primitive.  After replacing the exact ghost mass by a class-wide small-ball mass and optimizing over localized priors, it becomes a Sudakov-type obstruction; the full unrestricted Sudakov and majorizing-measure lower bounds require the classical Gaussian machinery.

\section*{Acknowledgements}

The author used ChatGPT 5.5 Pro as a research and editorial assistant to refine preliminary proofs and improve the exposition. With the exception of Theorem~\ref{thm:penalty-range-information-relaxation}, all mathematical claims, physical interpretations, and final formulations were independently proved, verified, and revised by the author. Theorem~\ref{thm:penalty-range-information-relaxation}, including the algorithm-suggested penalty-range information-relaxation criterion and its relevance to broader questions of algorithmic robustness, was proposed by the author. Its proof was completed with the assistance of ChatGPT 5.5 Pro. The statistical formulation in terms of minimax analysis and high-dimensional asymptotics, as well as the proof, were subsequently checked and approved by the author. The author assumes full responsibility for the final form and validity of all results.

\end{document}